\definecolor{refblue}{rgb}{0,0,0.75} 
\definecolor{refblueb}{rgb}{0,0,1} 
\definecolor{refgreen}{rgb}{0,0.5,0} 
\definecolor{grey}{rgb}{0.7,0.7,0.7} 
\DeclareMathAlphabet\mathbfcal{OMS}{cmsy}{b}{n}
\newcommand{\R}{\mathbb{R}}
\newcommand{\C}{\mathbb{C}}
\newcommand{\boldsymb}{\boldsymbol }
\newcommand{\bupsilon}{\boldsymb \upsilon}
\newcommand{\mXG}{{ \mathrm{\boldsymbol{X}}_\Gamma}}
\newcommand{\mVG}{{\mathrm{\boldsymbol{V}}_\Gamma}}
\DeclareMathAlphabet\mathbfcal{OMS}{cmsy}{b}{n}
\renewcommand{\Re}{\mathrm{Re}\,}
\newcommand{\pt}{\partial_t}
\newcommand{\pttau}{\partial_t^\tau}
\DeclareOldFontCommand{\bf}{\normalfont\bfseries}{\mathbf}
\DeclareOldFontCommand{\it}{\normalfont\itshape}{\mathit}
\newcommand{\e}{\mathrm{e}}
\newcommand{\epsmu}{_{\varepsilon,\mu} }
\newcommand{\p}{\boldsymb p}
\newcommand{\bginc}{\boldsymbol{g}^{\textnormal{inc}}}
\newcommand{\Einc}{\boldsymbol{E}^\textnormal{inc}}
\newcommand{\Hinc}{\boldsymbol{H}^\textnormal{inc}}
\newcommand{\bJ}{\boldsymbol{J}}
\newcommand{\bpsi}{{\boldsymbol{\psi}}}
\newcommand{\bphi}{\boldsymbol{\phi}}
\newcommand{\bwphi}{\widehat{\boldsymbol{\phi}}}
\newcommand{\bvar}{\boldsymbol{\varphi}}
\newcommand{\bwphip}{\widehat{\boldsymbol{\phi}}^+}
\newcommand{\bwphim}{\widehat{\boldsymbol{\phi}}^-}
\newcommand{\bphip}{{\boldsymbol{\phi}}^+}
\newcommand{\bphim}{{\boldsymbol{\phi}}^-}
\newcommand{\bAinvs}{\bA^{-1}(s)}
\newcommand{\bvarphi}{{\boldsymbol{\varphi}}}
\newcommand{\bxi}{\boldsymbol{\xi}}
\newcommand{\Om}{\Omega}
\newcommand{\bwginc}{\widehat{\boldsymbol{g}}^{\mathrm{inc}}}
\newcommand{\bg}{\boldsymbol{g}}
\newcommand{\bwg}{\widehat{\boldsymbol{g}}}
\newcommand{\bwvarphi}{\widehat{\boldsymbol{\varphi}}}
\newcommand{\bwpsi}{\widehat{\boldsymb\psi}}
\newcommand{\cqb}{\boldsymb b}
\newcommand{\cqc}{\boldsymb c}
\newcommand{\cqg}{\boldsymb g}
\newcommand{\cqC}{\boldsymb C}
\newcommand{\cqK}{\boldsymb K}
\newcommand{\cqH}{\boldsymb H}
\newcommand{\cqI}{\boldsymb I}
\newcommand{\cqL}{\boldsymb L}
\newcommand{\cqW}{\boldsymb W}
\newcommand{\cqX}{\boldsymb X}
\newcommand{\cqY}{\boldsymb Y}
\newcommand{\wchi}{\widehat{\chi}}
\newcommand{\bA}{\boldsymb{A}}
\newcommand{\divG}{\operatorname{div}_\Gamma}
\DeclareMathOperator{\curl}{\mathbf{curl}}
\newcommand{\norm}[1]{\left\lVert#1\right\rVert}
\newcommand{\abs}[1]{\left|#1\right|}
\newcommand{\jmp}[1]{[#1]}
\newcommand{\avg}[1]{\{#1\}}
\newcommand{\dleb}[1]{\,\mathrm{d}{#1}}
\renewcommand{\Re}{\operatorname{Re}}
\newcommand{\dist}{\operatorname{dist}}
\newcommand{\inc}{_\textnormal{inc}}
\newcommand{\tot}{^\textnormal{tot}}
\newcommand{\normal}{\boldsymbol{\nu}}
\newcommand{\gaT}{{\gamma}_{T}}
\newcommand{\A}{\boldsymb  A}
\newcommand{\B}{\boldsymb H}
\newcommand{\bC}{\boldsymb C}
\newcommand{\E}{\boldsymb E}
\newcommand{\wE}{\widehat{\boldsymb E}}
\newcommand{\wH}{\widehat{\boldsymb H}}
\newcommand{\I}{\mathrm{I}}
\newcommand{\bId}{\textbf{I}\!\hspace{0.7pt}\textbf{d}}
\renewcommand{\H}{\boldsymb H}
\newcommand{\K}{\boldsymb K}
\newcommand{\bL}{\boldsymb L}
\newcommand{\bR}{\boldsymb R}
\newcommand{\U}{\mathbfcal U}
\newcommand{\V}{\boldsymb V}
\newcommand{\W}{\mathbfcal W}
\newcommand{\X}{\boldsymb X}
\newcommand{\Cald}{\boldsymb C}
\renewcommand{\a}{\boldsymb a}
\renewcommand{\b}{\boldsymb b}
\newcommand{\g}{g}
\renewcommand{\u}{\boldsymb u}
\renewcommand{\v}{\boldsymb \upsilon}
\newcommand{\x}{ x}
\newcommand{\y}{ y}
\renewcommand{\x}{\boldsymb x}
\renewcommand{\y}{\boldsymb y}
\newcommand{\bx}{\boldsymb x}
\newcommand{\Ga}{\Gamma}
\newcommand{\eps}{\varepsilon}
\newcommand{\bone}{\mathbbm{1}}
\newcommand{\Ctrace}{C_\Gamma}
\newcommand{\ctrace}{c_\Gamma}
\shorttitle{Electromagnetic scattering from dispersive materials}
\title{Time-dependent electromagnetic scattering from dispersive materials}
\author{
{\sc
J\"org Nick \thanks{ Email: joerg.nick@math.ethz.ch}
} \\[2pt]
Seminar für Angewandte Mathematik, ETH Zürich, CH-8092 Z\"urich, Switzerland \\[6pt]
{\sc  Selina Burkhard}\thanks{Email: selina.burkhard@kit.edu}\\[2pt]
Institut für Angewandte und Numerische Mathematik, Karlsruher Institut für Technologie (KIT), D-76131 Karlsruhe, Germany\\[6pt]
{\sc Christian Lubich}\thanks{Email: lubich@na.uni-tuebingen.de}\\[2pt]
Mathematisches Institut, Universität Tübingen, Auf der Morgenstelle, D-72076 Tübingen,\\
Germany  
}
\begin{document}

\maketitle
 
\begin{abstract}
{This paper studies time-dependent electromagnetic scattering from metamaterials that are described by dispersive material laws. We consider the numerical treatment of a scattering problem in which a dispersive material law, for a causal and passive homogeneous material, determines the wave-material interaction in the scatterer. The resulting problem is nonlocal in time inside the scatterer and is posed on an unbounded domain. Well-posedness of the scattering problem is shown using a formulation that is fully given on the surface of the scatterer via a time-dependent boundary integral equation. Discretizing this equation by convolution quadrature in time and boundary elements in space yields a provably stable and convergent method that is fully parallel in time and space. Under regularity assumptions on the exact solution we derive error bounds with explicit convergence rates in time and space. Numerical experiments illustrate the theoretical results and show the effectiveness of the method.}
{electromagnetic scattering, dispersive material laws, time-dependent partial differential equations, convolution quadrature, boundary element method} 
\end{abstract}

\section{Introduction and setting}
Since the pioneering work of \cite{V68}, dispersive materials and their interaction with electromagnetic waves have attracted much scientific interest. The use of metamaterials promises to advance many applications in the context of optical devices and imaging. A collection of applications is found in \cite[Section 5]{L16}.

A survey of the mathematical literature is given by \cite{L16} and a coherent presentation of basic mathematical results is given by \cite{CJK17}. 
Following that paper, we require the causality principle and homogeneity for the metamaterials, and we consider the fundamental class of (strongly) passive materials.

In the approach to time-dependent scattering taken here, the scattering problem posed in the exterior domain and the dispersive bounded scattering object
is reduced to a time-dependent boundary integral equation for the tangential traces of the electric and magnetic fields. From the tangential traces, the electromagnetic fields inside and outside the scatterer are then obtained via representation formulas. We show well-posedness of the boundary integral equation and the scattering problem for causal and passive homogeneous dispersive materials.
We use convolution quadrature and boundary elements for the numerical discretization and provide a fully discrete error analysis. Numerical experiments illustrate the theoretical results and show the effectiveness of the method.


	The numerical simulation of wave propagation problems on
	exterior domains by discretizing time-dependent boundary integral equations with convolution quadrature in time and boundary elements in space originates from \cite{L94}. This approach has been taken up in the numerical literature both in the acoustic
case, e.g. \cite{LS09,BLS15,BS09,BanjaiRieder,BLN20,S16}, and in the electromagnetic case, e.g. \cite{ChenMonkWangWeile,BBSV13,ChanMonk2015,KL17,NKL22}. In \cite{DES21}, a convolution quadrature discretization has been applied to dispersive electromagnetic material laws in combination with finite volume techniques.

Further related literature is, e.g.,  \cite{QiuS16}, which considers the acoustic wave equation and a reformulation as retarded boundary integral equation. The discretization is provided by a Galerkin 
semi-discretization in space and convolution quadrature in time. In contrast to us, this paper uses evolution equation techniques for the fully discrete system. In \cite{EFHS21}, a formulation of an acoustic wave transmission problem with mixed boundary conditions as a retarded potential integral equation is derived and wellposedness is shown.
\cite{ChanMonk2015} solve dielectric scattering with a homogeneous penetrable obstacle, by using boundary integral methods and convolution quadrature.

For a recent overview on convolution quadrature with applications to scattering problems, we refer to \cite{BanS22}.

\subsection{Dispersive Maxwell's equations on a single domain $\Omega$}
Let $\Omega\subset \mathbb R^3$ be an interior or exterior domain. We are interested in time-dependent (possibly dispersive) electromagnetic waves, which are modeled by \emph{Maxwell's equations} 
(here assumed with vanishing current and charge)
\begin{align}\label{eq:MW-complete}
\begin{split}
	\partial_t \boldsymb D -\curl \H &= 0  
	\\ \partial_t \boldsymb B +\curl \E&=  0 
\end{split}
\qquad \text{in } \ \Omega.
\end{align}
These equations are complemented by the material laws
\begin{align}\label{eq:material law}
 \boldsymb D = \varepsilon_0\E + \boldsymb P ( \E) , \quad 
 \boldsymb B  =\mu_0\boldsymb H + \boldsymb M ( \H),
\end{align}
with the constant permittivity  $ \varepsilon_0$ and permeability  $\mu_0$ of vacuum and
with the polarization field $\boldsymb P$ and the magnetization field $\boldsymb M$. 
For homogeneous materials, as will be considered here,
these fields are of the form of a temporal convolution
\begin{align}\label{eq:pol}
\boldsymb P (\E)(t) &= \varepsilon_0\int_{0}^t\chi_e(t')\E(t-t') \mathrm d t',
\\
\label{eq:magnet}\boldsymb M (\H)(t) &= \mu_0\int_{0}^t\chi_m(t')\H(t-t') \mathrm d t',
\end{align}
with the scalar susceptibility kernels $\chi_e$ and $\chi_m$. 
\subsection{Examples of retarded material laws}\label{sec:rml_examples}
We present various material laws, which can be found in \cite{CJK17,BKN11}. The reaction of material, different from vacuum,  is non-instantaneous when exposed to stress. It depends on the past, it has a memory.

\begin{subequations}\label{eq:sus-kernels-all}
A common example used in the literature is the Debye model, for which the susceptibility kernel is given by
\begin{equation}\label{eq:exp-kernel}
	\chi_e(t) = \beta \e^{-\lambda t}, \quad \text{for} \quad \lambda, \beta > 0,
\end{equation}
with relaxation parameter ${1}/{\lambda}$. Here the distraction of the material depends stronger on the more recent past, this is captured by the exponential damping. Kernels in physical applications often consist of sums of the type given above or, more generally, are completely monotonic functions
(cf.~\cite{Wid41})
$$
\chi_e(t)= \int_0^\infty \e^{-\lambda t} \beta(\dleb\lambda) \qquad\text{with a positive measure $\beta$}.
$$

Another class of material laws  simultaneously damps the system and introduces a temporal delay, by some fixed $t_* > 0$. The analysis of such systems has received significant attention, for example in \cite{NP06} and, for scattering problems, in \cite{P12}. In our setting, such models are described by the convolution with the shifted Heaviside function
\begin{equation}\label{eq:heaviside-kernel}
\chi_e(t)= \alpha_1 +\alpha_2	\Theta(t-t_*) = \begin{cases}
\alpha_1 +\alpha_2, \quad & t \ge t_*\\
\alpha_1, \quad& t < t_*.
\end{cases}
\end{equation}
We always assume that the parameters satisfy $ \alpha_1 \ge \alpha_2>0$.
In \cite{NP06}, the respective material law in the acoustic setting has been shown to be exponentially stable for $ \alpha_1 >\alpha_2>0$. The authors further construct, for the converse case of $\alpha_2 > \alpha_1$, arbitrary small shifts $t_*$ that destabilize the system. In the next sections we will show that the corresponding electromagnetic scattering  problem is well-posed and stable under the stated constraints.

Popular models for the propagation of light and its interaction with matter are the Drude and Lorentz models in nanophotonics; see~\cite{BKN11,CJK17}.
The Drude model can be used to model metal under the influence of an external electric field. Then, the conduction electrons behave as charged free particles, they form an ideal classical gas. 
With  the collision frequency $\gamma_D$ and the plasma frequency $\omega_D$,  the temporal susceptibility kernel is given by
\begin{equation}\label{eq:drude-kernel}
	\chi(t) = \frac{\omega_D^2}{\gamma_D}\left(1-\e^{-\gamma_D t}\right).
\end{equation}
The Drude model has to be modified for metals in the visible regime, where interband transitions of the electrons occur due to higher photon energies. In this case Lorentz oscillators provide a simple model: with 
\begin{equation}\label{eq:lorentz-kernel}
	\chi(t) = \frac{\beta_L}{\lambda_L}\e^{-\frac{\alpha_L}{2} t }\sin(\lambda_L t), \qquad \lambda_L = \sqrt{\omega_L^2 - \frac{\alpha_L}{4}},
\end{equation}
where  $\omega_L$ is the resonance frequency, 
$0 < \alpha_L < 4\omega_L^2$ is the damping coefficient and $\beta_L > 0$ gives the strength.

More involved models contain fractional derivatives, such as the Havriliak-Negami model, which models dielectric relaxation in complex systems, cf.~\cite{WW10} and~\cite{G15}: with positive coefficients $\beta$ and $\gamma$ and the exponent $0<\eta < 2$,
\begin{alignat}{4} \label{eq:fractional}
	\eps_0 \big(	(1+\gamma)\pt+ \beta \pt^{1+\eta}\big) \E - \curl \big(1+\beta\pt^\eta\big)\B=0.
\end{alignat}
\end{subequations}
This can be reformulated as \eqref{eq:material law} with \eqref{eq:pol}, where $\chi_e(t)$ is the convolution kernel given by its Laplace transform $\widehat \chi_e (s) = \gamma/(1+\beta s^\eta)$.


\subsection{The time-dependent scattering problem}
We decompose the complete space $\mathbb R^3$ into the exterior domain $\Omega^+$, the interior (bounded) domain $\Omega^-$ and the interface $\Gamma =\partial \Omega^+= \partial \Omega^-$, which yields the disjoint union $\mathbb R^3 = \Omega^- \cup \Gamma \cup \Omega^+$. Inside the scatterer, i.e. in the bounded domain $\Omega^-$, we enforce a retarded material law and couple it with Maxwell's equations with physical parameters corresponding to a vacuum in the exterior domain $\Omega^+$. We then arrive at the following equations in their respective domains:\\ \ \\
	\begin{minipage}{0.5\textwidth}
		\centering{\text{In the interior domain $\Omega^{-}$}}:
		\begin{alignat*}{4}
		\varepsilon_0&\partial_t\E^{{-}} &&+ \boldsymb P ( 	\partial_t\E^{-}) &&-\curl \H^{-} &&= 0,  		\\ 
			\mu_0&\partial_t \ \H^{-} &&+ \boldsymb M (\partial_t\H^{-})&&+\curl \E^{-}&&=  0 .
		\end{alignat*}
	\end{minipage}
\begin{minipage}{0.5\textwidth}
			\centering{\text{In the exterior domain $\Omega^{{+}}$}}:
			\begin{align}\label{eq:td-scattering-domain}
			\begin{split}
			\varepsilon_0	\partial_t\E^{+} -\curl \H^{+} &= 0,  
			\\ \mu_0 \partial_t \boldsymb H^{+} +\curl \E^{+}&=  0 .
			\end{split} 
			\end{align}
		\end{minipage}
\		\\
Initially, we assume the system to be at rest with vanishing electromagnetic fields in the interior and the exterior. The system is excited by an exterior incoming electromagnetic wave $(\E\inc,\H\inc)$, a solution of the exterior Maxwell's equations with support initially away from the surface $\Gamma$ of the scatterer. The unknown exterior fields $(\E^+,\H^+)$ are referred to as the \emph{scattered fields}.  They uniquely identify, together with the incoming wave, the total electromagnetic fields via $\E\tot = \E^++\Einc$ and $\H\tot = \H^++\Hinc$. Inside  the scatterer there is only the initially vanishing scattered wave $(\E^-,\H^-)$, making such a distinction unnecessary.  

Along the interface of the scatterer $\Gamma=\partial \Omega^{\pm}$, we enforce continuity of the tangential components of the total electric and magnetic fields, which reads
\begin{align}\label{eq:td-transmission-conditions}
\begin{aligned}
\gamma_T\E^{-} &= \gamma_T\E^{+} + \gamma_T \E^{+}_{\text{inc}}, \\
\gamma_T \H^{-} &= \gamma_T \H^{+} + \gamma_T \H^{+}_{\text{inc}}
\end{aligned}\quad\ \text{on} \ \Gamma.
\end{align}

The numerical treatment of this scattering problems needs to overcome the following main challenges of the problem above:
\begin{itemize}
\item The material laws \eqref{eq:pol}--\eqref{eq:magnet} are nonlocal in time and therefore require, for general susceptibilities $(\chi_e,\chi_m)$, the whole
history of the solution at any time $t$, which leads to a memory tail with standard time-stepping discretizations.
\item The exterior domain $\Omega^+$ is unbounded.
\end{itemize}

For the ease of presentation we consider a retarded material law inside the scatterer and vacuum in the unbounded domain. The case of retarded material laws in both domains is a straightforward extension of this work.

\subsection{Outline and contributions of the paper}

%
%
%
The present problem formulation is partly inspired by \cite{ChanMonk2015}, which gives the first numerical analysis for time-dependent electromagnetic scattering from dielectric penetrable obstacles. In this paper, we go beyond the existing literature by a thorough numerical analysis for scattering from dispersive materials, which are described by  non-local convolutional material laws in the time domain and frequency-dependent permittivities and permeabilities in the Laplace domain. The mathematical theory describing such materials has been extensively developed in the last years, see for example \cite{CJK17} for an excellent overview. 

We transfer the techniques developed in \cite{KL17} and \cite{NKL22}, which in turn originate in the acoustic analogues of \cite{BLS15} and \cite{BLN20}), respectively. On the analytical side, we show that the assumptions made on the mathematical models for dispersive materials lead to well-posed boundary integral equations. On the numerical side, employing a temporal discretization based on the convolution quadrature method combined with a boundary element space discretization, we provide the first provably stable and convergent numerical method for time-dependent electromagnetic scattering from dispersive materials based on time-dependent boundary integral equations. In the following, we give an outline and discuss the contributions of each section.

In Section~\ref{sec:framework}, we recall the foundation of dispersive Maxwell's equations and describe the framework of passive material laws used in the subsequent sections. Lemma~\ref{lem:examples} shows that all examples from the introduction are included in the setting. 

 Section~\ref{sec:time-harmonic-transmission} formulates and analyses a basic dispersive time-harmonic transmission problem, for which a central bound for the electromagnetic fields is shown in Lemma~\ref{lem:transmission}. 
As a consequence, bounds for the potential and boundary operators corresponding to the time-harmonic dispersive Maxwell's equations are deduced. Moreover, the fundamental Calder\'on operator is constructed for the dispersive Maxwell's equations, which is differently scaled than its dielectric counterpart in the previous work by \cite{KL17}. Assuming passivity of the material law, we obtain the crucial time-harmonic coercivity result of Lemma~\ref{lem:B-coercivity}. 

In Section~\ref{sec:th-scattering}, we apply the previously established operators to derive a well-posed and stable time-harmonic boundary integral equation and prove the equivalence to the time-harmonic scattering problem of interest in Proposition~\ref{prop:wp-time-harm-scattering}. Moreover, $s$-specific bounds are shown which estimate the solution of the scattering problem in terms of the incoming wave. Assuming a stronger passivity assumption on the material law, we obtain simplified bounds of all operators, which can be transported into the time-domain. 

Section~\ref{sec:td-scattering} carries the time-harmonic analysis over to the time-domain. The time-dependent boundary integral equation is formulated in \eqref{eq:bie-A-t} and its central properties are collectively shown in Theorem~\ref{th:time-dependent-well-posedness}. In particular, we show the well-posedness of the boundary integral equation, the equivalence to the time-dependent scattering problem and give estimates on the solution in terms of the incoming waves. All of these results are the direct consequence of their time-harmonic counterparts.  

In Section~\ref{section:time semi-discrete} we apply a convolution quadrature time discretization, based on the Radau IIA Runge--Kutta methods, to the boundary integral equation, which yields a temporally discrete scheme for the approximation of the scattering problem. We introduce some basic results surrounding the convolution quadrature method and crucial notation for the error analysis in the subsequent section, but omit the formulation of semi-discrete error bounds.    

Section~\ref{sec:full} introduces the spatial discretization based on Raviart--Thomas boundary elements and cites the best-approximation result used in this paper. The main part of this section consists of the error analysis, which leads to the main result of Theorem~\ref{th:error-bounds}. Here, the bulk of the previous analysis enters, however the structure of the proof is carried over from \cite[Theorem 6.1]{NKL22}. A complication on the way towards pointwise estimates away from the boundary is overcome by requiring additional regularity on the data (by following the ideas of the proof of Lemma~\ref{lem:eps-mu-s} and \eqref{eq:bound-U-h-dispersive}). 

Finally in Section~\ref{sec:numerics} we describe the numerical experiments conducted for the present setting. A fractional material law is used with a simple domain to compute empirical convergence rates in space and time, which illustrate the error bounds of the previous sections. An example with two cubes  demonstrates the use of the method and closes the paper.  

\section{Reformulation of the problem and mathematical framework}
\label{sec:framework}

\subsection{Reformulation of the time-dependent scattering problem}
Via the Laplace transforms $(\widehat\chi^{\pm}_e(s),\widehat\chi^{\pm}_m(s))$ of the susceptibility kernels $(\chi^{\pm}_e(t),\chi^{\pm}_m(t))$,
we define the functions
\begin{equation}
\varepsilon^{\pm}(s)= \varepsilon_0 (1+\widehat \chi^{\pm}_e (s)),\quad\ 	\mu^{\pm}(s)= \mu_0 (1+\widehat \chi^{\pm}_m (s)),
\end{equation}
which are the Laplace transforms of the distributions $\varepsilon_0(\delta + \chi^{\pm}_e)$ and 
$\mu_0 (\delta + \chi^{\pm}_m)$
(with Dirac's delta distribution).
We use the Heaviside notation for temporal convolution: for a function $g$ defined on the real line, 
\begin{equation} \label{Heaviside-epsmu}
\varepsilon^{\pm}(\pt)\g = (\mathcal{L}^{-1}\varepsilon^{\pm}) * \g, \quad\ \mu^{\pm}(\pt)\g = (\mathcal{L}^{-1}\mu^{\pm}) * \g,
\end{equation}
where $\mathcal{L}^{-1}$ denotes taking the inverse Laplace transform.
(We will later use this notation also for temporal convolutions related to other Laplace transforms.)
We then arrive at the following reformulation of the scattering problem: \\ 

	\begin{minipage}{0.5\textwidth}
		\centering{\text{In the interior domain $\Omega^{-}$}}:
		\begin{alignat*}{4}
		\varepsilon^-(\partial_t)&\,\partial_t\E^{{-}} &&-\curl \H^{-} &&= 0,  
		\\ \mu^-(\partial_t)&\,\partial_t\H^{{-}} &&+\curl \E^{-}&&=  0 .
		\end{alignat*}
	\end{minipage}
	\begin{minipage}{0.5\textwidth}
		\centering{\text{In the exterior domain $\Omega^{{+}}$}}:
		\begin{align}\label{eq:td-scattering-domain-2}
		\begin{split}
	\varepsilon^+(\partial_t)\,	\partial_t\E^{+} -\curl \H^{+} &= 0,  
		\\ \mu^+(\partial_t)\, \partial_t \boldsymb H^{+} +\curl \E^{+}&=  0 .
		\end{split} 
		\end{align}
	\end{minipage}
\ \\
This is completed by enforcing continuity of the tangential parts of the electromagnetic fields along the boundary, as in \eqref{eq:td-transmission-conditions}.

\subsection{Passivity conditions for the dispersive permittivities $\varepsilon^{\pm}(s)$ and permeabilities $\mu^{\pm}(s)$}

As is explained in \cite[after Definition~2.5]{CJK17},
passivity and causality 
of the time-varying material law result from the following property : 
\begin{equation}\label{passive}
\Re \bigl(  \varepsilon^{\pm}(s)s \bigr) > 0 \ \ \text{ and } \ \ \Re \bigl( \mu^{\pm}(s)s\bigr) > 0 
\qquad\text{for}\ \   \Re s >0.
\end{equation}
This will be assumed throughout this paper.
For our purposes it will sometimes be useful to assume a stronger passivity condition: 
\begin{align}\label{strongly-passive}
\Re \bigl(  \varepsilon^{\pm}(s)s \bigr) \ge \varepsilon_0 \Re s \ \ \text{ and } \ \ \Re \bigl(  \mu^{\pm}(s)s\bigr) \ge  \mu_0 \Re s 
\qquad\text{for}\ \ \Re s >0.
\end{align}
This condition is equivalent to $\Re( \widehat\chi^\pm_e(s)s)\ge 0$ and $\Re( \widehat\chi^\pm_m(s)s)\ge 0$ for $\Re s>0$.\\
We further assume a bound for $\varepsilon^{\pm}(s)$ and $\mu^{\pm}(s)$: for every $\sigma >0$, there exists $M_\sigma <\infty$ such that
\begin{equation}\label{eq:bound-eps-mu}
\abs{\varepsilon^{\pm}(s)}\le M_\sigma \eps_0\quad \text{and} \quad \abs{\mu^{\pm}(s)}\le M_\sigma \mu_0
\qquad\text{for}\quad \Re s \ge \sigma >0.
\end{equation}


\begin{lemma}\label{lem:examples}
	All examples of \eqref{eq:sus-kernels-all} satisfy the strong passivity condition \eqref{strongly-passive} and the bound \eqref{eq:bound-eps-mu}.
\end{lemma}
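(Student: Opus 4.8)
The plan is to check the two conditions example by example. For strong passivity \eqref{strongly-passive} I use the equivalent form noted right after \eqref{strongly-passive}, namely $\Re(\widehat\chi(s)\,s)\ge 0$ for $\Re s>0$, applied to each scalar susceptibility $\widehat\chi\in\{\widehat\chi_e^\pm,\widehat\chi_m^\pm\}$; since the examples in \eqref{eq:sus-kernels-all} are each given as a single scalar kernel, it is enough to treat one generic $\widehat\chi$. The first step is to compute $\widehat\chi(s)$ for each kernel (for \eqref{eq:fractional} it is already supplied). The second step verifies $\Re(\widehat\chi(s)\,s)\ge 0$, and the third verifies that $\widehat\chi$ is holomorphic and bounded on each half-plane $\{\Re s\ge\sigma\}$, which yields \eqref{eq:bound-eps-mu} through $\varepsilon^\pm(s)=\varepsilon_0(1+\widehat\chi_e^\pm(s))$ and $\mu^\pm(s)=\mu_0(1+\widehat\chi_m^\pm(s))$.

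For the Debye kernel \eqref{eq:exp-kernel} one has $\widehat\chi(s)=\beta/(s+\lambda)$, and multiplying $\widehat\chi(s)s=\beta s/(s+\lambda)$ by $\overline{s+\lambda}$ gives real part $\beta(|s|^2+\lambda\Re s)/|s+\lambda|^2>0$; the completely monotonic generalisation then follows by integrating this pointwise-nonnegative bound against the positive measure $\beta(\dleb\lambda)$. The Drude kernel \eqref{eq:drude-kernel} transforms to $\widehat\chi(s)=\omega_D^2/(s(s+\gamma_D))$, so $\widehat\chi(s)s=\omega_D^2/(s+\gamma_D)$ has positive real part. For the Lorentz kernel \eqref{eq:lorentz-kernel}, the identity $\lambda_L^2=\omega_L^2-\alpha_L/4$ collapses the denominator to $\widehat\chi(s)=\beta_L/(s^2+\alpha_L s+\omega_L^2)$, whence $\widehat\chi(s)s=\beta_L/(s+\alpha_L+\omega_L^2/s)$; the bracket has positive real part for $\Re s>0$ (as $\Re(\omega_L^2/s)=\omega_L^2\Re s/|s|^2>0$), and $\Re(1/w)=\Re w/|w|^2$ finishes it. The shifted Heaviside kernel \eqref{eq:heaviside-kernel} gives $\widehat\chi(s)s=\alpha_1+\alpha_2\e^{-st_*}$, and here the assumption $\alpha_1\ge\alpha_2>0$ is exactly what is needed, since $|\alpha_2\e^{-st_*}|=\alpha_2\e^{-t_*\Re s}<\alpha_2\le\alpha_1$ for $\Re s>0$.

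The main obstacle is the fractional Havriliak--Negami law \eqref{eq:fractional}, for which $\widehat\chi(s)s=\gamma s/(1+\beta s^\eta)$ with $0<\eta<2$; here I argue with arguments rather than real parts. Writing $s=r\e^{\mathrm{i}\theta}$ with $\theta=\arg s\in(-\pi/2,\pi/2)$, the term $\beta s^\eta$ has argument $\eta\theta$, and adding the positive number $1$ pulls the argument strictly toward $0$, so $\arg(1+\beta s^\eta)$ has the sign of $\theta$ and modulus at most $|\eta\theta|$. Consequently $\arg(\widehat\chi(s)s)=\theta-\arg(1+\beta s^\eta)$ lies between $\theta$ and $\theta(1-\eta)$, and since $|1-\eta|<1$ for $0<\eta<2$ both endpoints have modulus $<|\theta|<\pi/2$; thus $\arg(\widehat\chi(s)s)\in(-\pi/2,\pi/2)$ and $\Re(\widehat\chi(s)s)>0$. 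This is precisely where the restriction $0<\eta<2$ enters.

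For \eqref{eq:bound-eps-mu} I note that in each case $\widehat\chi$ is holomorphic on $\{\Re s>0\}$: its singularities are among the points $s=0$, $s=-\lambda$, $s=-\gamma_D$, the roots of $s^2+\alpha_L s+\omega_L^2$ (which lie in $\{\Re s<0\}$ because $\alpha_L>0$), or the solutions of $1+\beta s^\eta=0$ (excluded from $\{\Re s>0\}$ by the angle computation above), all lying in $\{\Re s\le0\}$. Since moreover $|\widehat\chi(s)|\to 0$ as $|s|\to\infty$ in every case, and $\{\Re s\ge\sigma,\ |s|\le R\}$ is compact, $|\widehat\chi|$ is bounded on $\{\Re s\ge\sigma\}$, giving a finite $M_\sigma$. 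The fractional case again needs the extra remark that $1+\beta s^\eta$ is bounded away from $0$ on $\{\Re s\ge\sigma\}$, which combines continuity on the compact set with $|1+\beta s^\eta|\ge\beta|s|^\eta-1\to\infty$.
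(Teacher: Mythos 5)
Your proposal is correct and takes essentially the same route as the paper: a case-by-case verification of $\Re\bigl(\widehat\chi(s)s\bigr)\ge 0$ via the explicit Laplace transforms, differing only cosmetically in that you track $\arg(1+\beta s^\eta)$ for the fractional law where the paper computes $\Re(s\,\overline{s}^{\,\eta})=|s|^{1+\eta}\cos\bigl((1-\eta)\arg s\bigr)\ge 0$ directly (both hinge on $|1-\eta|<1$), and in that you spell out the holomorphy/decay/compactness argument for \eqref{eq:bound-eps-mu}, which the paper dismisses as obvious. One pedantic remark: with the paper's literal $\lambda_L=\sqrt{\omega_L^2-\alpha_L/4}$ the Lorentz denominator collapses to $s^2+\alpha_L s+\omega_L^2+(\alpha_L^2-\alpha_L)/4$ rather than $s^2+\alpha_L s+\omega_L^2$, but its constant term is still positive, so your positivity argument (like the paper's, which writes the denominator loosely as $s(s+\alpha)+\omega$) goes through unchanged.
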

\begin{proof}
	\noindent\eqref{eq:exp-kernel} For the Debye model with $\lambda>0$ and $\beta>0$ we have, for $\Re s > 0$,
	\begin{equation*}
	\wchi_e(s)=(\mathcal{L}\chi_e)(s) = \frac{\beta}{s +\lambda} \quad \text{ and hence}\quad \Re\bigl(\widehat{\chi}_e(s)s\bigr) = \frac{\beta}{\abs{s+\lambda}^2}\big(\abs{s}^2 +\lambda\Re s\big) \ge 0.
	\end{equation*}
More generally, by the same argument we also obtain the strong passivity for completely monotonic susceptibility kernels $\chi_e$.
	
	
	\smallskip
	\eqref{eq:heaviside-kernel} The Laplace transform of the susceptibility kernel corresponding to the shifted Heaviside function $\chi_e(t)= \alpha_1+\alpha_2\Theta(t-t_*)$
	reads
	\begin{equation*}
	\wchi_e(s) = s^{-1}\left(\alpha_1+\alpha_2\e^{-t_* s}\right) ,
	\end{equation*}
	for which we obtain the strong passivity under the condition $\alpha_1\ge\alpha_2$.

	\smallskip
	\eqref{eq:drude-kernel} The Laplace transform of the susceptibility kernel corresponding to the Drude model reads
\[	\widehat{\chi}_e(s) = \frac{\omega_D^2}{s(s+\gamma_D)}, \qquad\text{such that}\qquad
	\Re\bigl(\widehat{\chi}_e(s)s\bigr) = \frac{\omega_D^2}{\abs{s+\gamma_D}^2}\big(\gamma_D + \Re s\big) \ge 0.
\]	

	\smallskip
    \eqref{eq:lorentz-kernel} The Lorentz model is determined by
	\begin{equation*}
	\wchi_e(s) = \frac{\beta}{s(s +\alpha) +\omega}, 
	\quad\text{such  that}\quad
	\Re\big(\widehat{\chi}_e(s)s\big) = \frac{\beta}{\abs{s(s +\alpha) +\omega}^2} \Re\big(\abs{s}^2 s + \abs{s}^2 \alpha + s \omega\big)\ge 0.
	\end{equation*}

	\smallskip
\eqref{eq:fractional} The susceptibility kernel describing the fractional material law is characterized by the Laplace domain function with $\beta>0,\lambda>0$ and $ 0 < \eta < 2 $,
	\begin{equation*}
	\wchi_e(s) = \frac{\beta}{s^\eta +\lambda},
	\quad\text{ with }\quad
	\Re\bigl(\widehat{\chi}_e(s)s\bigr) = \frac{\beta\big(\Re(s\,\overline{s}^\eta) +\lambda\Re s\big)}{\abs{s^\eta+\lambda}^2}
	=
	\frac{\beta\big(\abs{s}^{2\eta}\big(\Re s^{1-\eta}+\lambda\Re s\big)}{\abs{s^\eta+\lambda}^2} \ge 0.
	\end{equation*}
%
%
The bound \eqref{eq:bound-eps-mu} is obvious for each example.
\end{proof}

\subsection{Temporal convolution}
\label{subsec:Z}

\noindent Let $K(s)\colon X \rightarrow Y$, for $\Re s>0$, be an analytic family of bounded linear operators between two Hilbert spaces $X$ and $Y$. We assume that $K$ is polynomially bounded: there exists a real $\kappa$, and for every $\sigma >0$ there exists $M_\sigma <\infty$, such that
\begin{align}\label{eq:pol_bound}
\norm{K(s)}_{Y\leftarrow X}&\leq M_\sigma \abs{s}^\kappa, \quad\ \text{ Re } s \ge \sigma>0.
\end{align}
This bound  ensures that $K$ is the Laplace transform of a distribution of finite order of differentiation with support on the non-negative real half-line $t \ge 0$. For a function $g:[0,T]\to X$, which together with its extension by~$0$ to the negative real half-line is sufficiently regular, we use the Heaviside operational calculus notation
\begin{equation} \label{Heaviside}
K(\pt)g = (\mathcal{L}^{-1}K) * g
\end{equation}
for the temporal convolution of the inverse Laplace transform of $Z$ with~$g$. For the identity operator $\mathrm{Id}(s)=s$, we have $\mathrm{Id}(\pt)g = \pt g$, the time derivative of $g$.
For two such families of operators $K(s)$ and $L(s)$ mapping into compatible spaces, the associativity of convolution and the product rule of Laplace transforms yield the composition rule
\begin{equation}\label{comp-rule}
K(\pt)L(\pt)g = (KL)(\pt)g.
\end{equation}

For a Hilbert space $X$, we let $H^r(\R,X)$ be the Sobolev space of order $r$ of $X$-valued functions on $\R$,
and on finite intervals $(0, T )$
we let
$$
H_0^r(0,T;X) = \{g|_{(0,T)} \,:\, g \in H^r(\R,X)\ \text{ with }\ g = 0 \ \text{ on }\ (-\infty,0)\} ,
$$
where the
subscript 0 in ${H_0^r}$ only refers to the left end-point of the interval.
For integer $r\ge 0$, the norm $\| \pt^r g \|_{L^2(0,T;X)}$ is equivalent to the natural norm on $H_0^r(0,T;X)$.
The Plancherel formula yields the following bound \cite[Lemma 2.1]{L94}:
If $K(s)$ is bounded by \eqref{eq:pol_bound} in the half-plane $\text{Re }s > 0$, then $K(\pt)$ extends by density to a bounded linear operator $K(\pt)$ from $H^{r+\kappa}_0(0,T;X)$ to $H^r_0(0,T;Y)$ with the bound
\begin{equation}\label{sobolev-bound}
\| K(\pt) \|_{ H^{r}_0(0,T;Y) \leftarrow H^{r+\kappa}_0(0,T;X)} \le e M_{1/ T}
\end{equation}
for arbitrary real $r$. Here the bound on the right-hand side arises from the bound $e^{\sigma T} M_\sigma$ on choosing $\sigma=1/T$.
We note that for any integer $k\ge 0$ and $\alpha>1/2$, we have the continuous embedding $H^{k+\alpha}_0(0,T;X)\subset C^k([0,T];X)$.

%

\subsection{The tangential trace and the trace space ${\textbf{\textit{X}}}_\Gamma$}
Let $\Omega$ be a bounded Lipschitz domain in $\R^3$ with boundary surface $\Gamma=\partial\Omega$ or the complement of the closure of such a domain. For a continuous vector field $\u:\overline\Omega\to\C^3$,
we define the {tangential trace}
\begin{align*}
\gaT \u= \u|_\Gamma \times \normal \qquad\text{on }\Gamma,
\end{align*}
where $\normal$ denotes the unit surface normal  pointing into the exterior domain.  We note that the tangential component of $\u|_\Gamma$ is $\u_T=(\boldsymb I-\normal\normal^\top)\u|_\Gamma = - (\gaT \u)\times \normal$.

By the version of Green's formula for the $\curl$ operator, we have for sufficiently regular vector fields $\u , \v : \overline\Omega\to\C^3$ that
\begin{equation}\label{eq: Green}
\int_\Omega  \bigl( \curl \u \cdot \v -  \u \cdot \curl \v \bigr)\textrm{d} \x = \int_\Gamma  (\gaT \u \times \normal) \cdot \gaT \v  \,\textrm{d}\sigma,
\end{equation}
where the dot $\cdot$ stands for the Euclidean inner product on $\C^3$, i.e., $\a \cdot \b = \overline{\a}^\top \b$ for $\a,\b\in\C^3$. The right-hand side in
this formula defines a skew-hermitian sesquilinear form on continuous tangential vector fields on the boundary, say $\bphi,\bpsi:\Gamma\to\C^3$, which we write as
\begin{equation}\label{skew}
[\bphi,\bpsi]_\Gamma = \int_\Gamma (\bphi\times\normal)\cdot \bpsi \, \textrm{d}\sigma.
\end{equation}
As it was shown in \cite{AV96} for smooth domains and extended by \cite{BCS02} for Lipschitz domains
(see also the survey in \cite[Sect.\,2.2]{BH03}), the trace operator $\gaT$ can be extended to a surjective bounded linear operator from the space that appears naturally for Maxwell's equations,
$
\H(\curl,\Omega) = \{\v \in \bL^2(\Omega)\,:\, \curl \v \in \bL^2(\Omega) \},
$
to the
$$
\text{
	{trace space}: a Hilbert space denoted $\mXG$, with norm $\|\cdot\|_\mXG$.
}
$$
This space is characterized as the tangential subspace of the Sobolev space $\H^{-1/2}(\Gamma)$  with surface divergence in $H^{-1/2}(\Gamma)$ (see the papers cited above for the precise formulation). It has the property that
the pairing $[ \cdot,\cdot ]_\Gamma$ can be extended to a non-degenerate continuous sesquilinear form on $\mXG\times \mXG$. With this pairing the space $\mXG$ becomes its own dual.

\section{A time-harmonic transmission problem }\label{sec:time-harmonic-transmission}

For the derivation of the parameter-dependent operators and representation formulas we write in this section $\varepsilon(s)$ and $\mu(s)$ either for $\varepsilon^{+}(s)$ and $\mu^{+}(s)$ or for
$\varepsilon^{-}(s)$ and $\mu^{-}(s)$.
The single and double layer potentials and the Calder\'{o}n operator are defined for the same material parameters in the inner and outer domain. In the derivation of the boundary integral equation, this suits the situation, since the solutions of Maxwell's equations are extended to $\R^3 \setminus \Gamma$, by setting them to zero on either the inner or outer domain. For notational simplicity, in the proofs we omit the frequency-dependence in the notation.

Formally applying the Laplace transform to Maxwell's equation and inserting the material law \eqref{eq:material law} in \eqref{eq:MW-complete} yields the time-harmonic Maxwell's equations
\begin{alignat}{2}
\label{TH-MW12}
\begin{aligned}
\varepsilon(s)s  \widehat{\E}-\curl \widehat{\B} &= 0
\\
\mu(s)s  \widehat{\B} + \curl \widehat{\E} &=0  
\end{aligned}\quad\quad \text{in} \ \mathbb R^3\setminus \Gamma
\end{alignat}
with the complex-valued analytic functions $\varepsilon$ and $\mu$ that satisfy the passivity condition
\eqref{passive}.

\subsection{Potential operators and representation formulas}
%
%

The fundamental solution of the time-harmonic Maxwell's equations with $\varepsilon=\mu=1$ reads
\begin{align*}
G(s,\x)= \dfrac{e^{-s\abs{\x}}}{4\pi\abs{\x}}, \qquad \Re s>0,\  \x\in\mathbb{R}^3 \setminus \{0\}.
\end{align*}

The \emph{electromagnetic single layer potential} operator is denoted by $\mathbfcal{S}(s)$. Applied to a complex-valued boundary function $\bvar$ of sufficient regularity for the expressions to be finite, and evaluated at a point $\x\in \mathbb{R}^3 \setminus \Gamma$ away from the boundary, it reads
\begin{align*}
\mathbfcal{S}(s)\bvar(\x)= -s\int_\Gamma G(s,\x-\y)\bvar(\y)\text{d}\y +  s^{-1}\nabla \int_\Gamma G(s,\x-\y) \divG \bvar(\y) \text{d}\y.
\end{align*}
The \emph{electromagnetic double layer potential} operator is denoted by $\mathbfcal{D}(s)$ and is given in the same context by
\begin{align*}
\mathbfcal{D}(s)\bvar(\x) = \curl\int_\Gamma G(s,\x-\y)\bvar(\y)\text{d}\y.
\end{align*}
By construction, the potential operators satisfy the relations
\begin{equation}\label{SD-MW}
s \mathbfcal{S}(s) - \curl \circ\, \mathbfcal{D}(s) = 0, \qquad s \mathbfcal{D}(s) + \curl \circ\, \mathbfcal{S}(s) =0.
\end{equation}
This section relies heavily on electromagnetic transmission problems, formulated on $\mathbb{R}^3\setminus \Gamma$. Jumps and averages for the tangential traces are defined by
\begin{alignat*}{2}
\jmp{\gamma_T} &=  \gamma_T^+-\gamma_T^- , \quad \quad
\avg{\gamma_T}  &&=  \tfrac12\left(\gamma_T^++\gamma_T^-\right).	
\end{alignat*}
The composition of the jumps with the potential operators reveals the {\it jump relations}
\begin{equation}\label{jump-rel}
\jmp{\gaT}\circ \mathbfcal{S}(s) =0, \qquad \jmp{\gaT}\circ \mathbfcal{D}(s) = -  \bId  .
\end{equation}
For general $\varepsilon,\mu$, we use the potential operators
\begin{equation} \label{SD-eps-mu}
	 \mathbfcal{S}\epsmu(s) = 	 \mathbfcal{S}\big(s\sqrt{\varepsilon(s)\mu(s)}\big) ,\quad 	 \mathbfcal{D}\epsmu(s) = 	 \mathbfcal{D}\big(s\sqrt{\varepsilon(s)\mu(s)}\big).
\end{equation}
The identities \eqref{SD-MW} and the jump relations \eqref{jump-rel} imply that any sufficiently regular boundary densities $(\bwvarphi,\bwpsi)$ 
are associated with electromagnetic fields $(\widehat{\E},\widehat{\H})$ by 
\begin{align}
\widehat{\E}&=-\frac{\sqrt{\mu(s)}}{\sqrt{\varepsilon(s)}}	 \mathbfcal{S}\epsmu(s)\bwvarphi + \mathbfcal{D}\epsmu(s)\bwpsi,\label{eq:time-harmonic-kirchhoff-E3}
\\
 \widehat{\H}&= - \mathbfcal{D}\epsmu(s)\bwvarphi -\frac{\sqrt{\varepsilon(s)}}{\sqrt{\mu(s)}}	 \mathbfcal{S}\epsmu(s)\,\bwpsi, \label{eq:time-harmonic-kirchhoff-H3}
\end{align}
which solve the transmission problem 
\begin{alignat}{2}
\, \varepsilon(s)s\widehat{\E}-\curl \widehat{\B} &=0 
\quad\quad\quad &&\text{in} \quad \mathbb{R}^3\setminus\Gamma ,\label{eq:transmis-1-thMW}
\\
\mu(s)s\widehat{\B} + \curl \widehat{\E} &=0 \quad &&\text{in} \quad \mathbb{R}^3\setminus\Gamma ,\label{eq:transmis-2-thMW} \\
\,\jmp{\gaT}\widehat{\B}&= \bwvarphi\,,\label{eq:transmis-3-thMW} &&\\
-\jmp{\gaT} \widehat \E&=\bwpsi \,.\label{eq:transmis-4-thMW} &&
\end{alignat}
Up to this point, this section was restricted to the presentation of established operators and identities, which hold for boundary densities of sufficient regularity. The next subsection provides bounds in terms of the appropriate norms, which in particular gives a rigorous setting for the previously defined operators. Before that, we turn to some useful estimates of the terms in formulas~\eqref{SD-eps-mu} 
and~\eqref{eq:time-harmonic-kirchhoff-E3}--\eqref{eq:time-harmonic-kirchhoff-H3}.

The following lemma shows that $ \mathbfcal{S}\epsmu(s)$ and $ \mathbfcal{D}\epsmu(s)$ behave well for $\Re s >0$.

	\begin{lemma}\label{lem:eps-mu-s}
		Under the passivity condition \eqref{passive},
		the argument appearing in the definition of the potential operators $\mathbfcal S_{\varepsilon,\mu}(s)$ and $\mathbfcal 
		D_{\varepsilon,\mu}(s)$ has positive real part: 
		\begin{align}\label{pos-sqrt-eps-mu-s}
			\Re \bigl(s\sqrt{\varepsilon(s)\mu(s)}\bigr) > 0 \quad\ \text{for }\ \Re s >0.
		\end{align}
		Under the strong passivity condition \eqref{strongly-passive}, we have with $c^{-1}=\sqrt{\eps_0\mu_0}$
		\begin{align}\label{strong-pos-sqrt-eps-mu-s}
			\Re \bigl(s\sqrt{\varepsilon(s)\mu(s)}\bigr) \ge c^{-1} \, \Re s  \quad\ \text{for }\ \Re s >0.
		\end{align}
\end{lemma}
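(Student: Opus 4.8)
The plan is to reduce the statement to an elementary inequality about principal square roots and then invoke Cauchy--Schwarz. Writing $a = \varepsilon(s)s$ and $b = \mu(s)s$, the passivity condition \eqref{passive} says precisely that $\Re a > 0$ and $\Re b > 0$, so both $a$ and $b$ lie in the open right half-plane and in particular avoid the cut $(-\infty,0]$. I would first settle the branch: since $\arg a,\arg b\in(-\pi/2,\pi/2)$, the principal square roots $\sqrt a,\sqrt b$ are analytic, and $\arg(\sqrt a\,\sqrt b)=\tfrac12(\arg a+\arg b)\in(-\pi/2,\pi/2)$ coincides with $\tfrac12\arg(ab)$ because $\arg a+\arg b\in(-\pi,\pi)$. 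Hence $\sqrt a\,\sqrt b$ is exactly the principal square root of $ab=s^2\varepsilon(s)\mu(s)$, and this is the branch meant by $s\sqrt{\varepsilon(s)\mu(s)}$ in \eqref{SD-eps-mu} (the one for which the potentials are well defined). It therefore suffices to bound $\Re\sqrt{ab}$ from below.

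The key step is the inequality
\[
\Re\sqrt{ab}\ \ge\ \sqrt{\,\Re a\,\cdot\,\Re b\,}\qquad\text{whenever }\ \Re a>0,\ \Re b>0 .
\]
To prove it I would use the identity $\Re\sqrt z=\sqrt{(|z|+\Re z)/2}$, valid for the principal root on $\C\setminus(-\infty,0]$, applied to $z=ab$. Squaring the claimed inequality gives $(|ab|+\Re(ab))/2\ge \Re a\,\Re b$, which upon inserting $\Re(ab)=\Re a\,\Re b-\operatorname{Im}a\,\operatorname{Im}b$ reduces to $|a|\,|b|\ge \Re a\,\Re b+\operatorname{Im}a\,\operatorname{Im}b$. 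If the right-hand side is negative there is nothing to show; otherwise one more squaring turns this into $\bigl((\Re a)^2+(\operatorname{Im}a)^2\bigr)\bigl((\Re b)^2+(\operatorname{Im}b)^2\bigr)\ge(\Re a\,\Re b+\operatorname{Im}a\,\operatorname{Im}b)^2$, which is exactly Cauchy--Schwarz for the vectors $(\Re a,\operatorname{Im}a)$ and $(\Re b,\operatorname{Im}b)$.

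With this inequality in hand both assertions follow at once. For \eqref{pos-sqrt-eps-mu-s}, since $\Re a>0$ and $\Re b>0$ we obtain $\Re\bigl(s\sqrt{\varepsilon(s)\mu(s)}\bigr)=\Re\sqrt{ab}\ge\sqrt{\Re a\,\Re b}>0$. For \eqref{strong-pos-sqrt-eps-mu-s}, the strong passivity \eqref{strongly-passive} gives $\Re a\ge\varepsilon_0\Re s$ and $\Re b\ge\mu_0\Re s$, so that $\Re\sqrt{ab}\ge\sqrt{\varepsilon_0\mu_0\,(\Re s)^2}=\sqrt{\varepsilon_0\mu_0}\,\Re s=c^{-1}\Re s$, using $\Re s>0$.

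I expect the only genuinely delicate point to be the branch bookkeeping of the first paragraph: one must be certain that the square root implicit in \eqref{SD-eps-mu} is the one with positive real part, i.e.\ that $\sqrt a\,\sqrt b=\sqrt{ab}$ and not $-\sqrt{ab}$. This is precisely what the right half-plane location of $a$ and $b$ guarantees, since their arguments add up inside $(-\pi,\pi)$ and no branch wrapping occurs; by contrast, $\varepsilon(s)\mu(s)$ on its own need not avoid the negative real axis, which is why it is cleaner to argue throughout with the product $(\varepsilon s)(\mu s)$. Everything else is the routine Cauchy--Schwarz computation above.
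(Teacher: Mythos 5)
Your proof is correct and follows the same overall route as the paper: both arguments set $a=\varepsilon(s)s$, $b=\mu(s)s$ and reduce everything to the key inequality $\Re\sqrt{ab}\ge\sqrt{\Re a\cdot\Re b}$ for $\Re a,\Re b>0$, which is exactly the inequality the paper isolates to prove \eqref{strong-pos-sqrt-eps-mu-s}. The only genuine difference is how that inequality is verified: the paper works in polar coordinates, using the concavity of the cosine on $[-\pi/2,\pi/2]$ followed by the arithmetic--geometric mean inequality, $\cos\bigl(\tfrac12(\alpha+\beta)\bigr)\ge\tfrac12(\cos\alpha+\cos\beta)\ge\sqrt{\cos\alpha\cdot\cos\beta}$, whereas you work in Cartesian coordinates via the half-angle identity $\Re\sqrt{z}=\sqrt{(|z|+\Re z)/2}$, reducing after two (legitimate, since both sides are nonnegative and you handle the sign case) squarings to Cauchy--Schwarz for the vectors $(\Re a,\operatorname{Im}a)$ and $(\Re b,\operatorname{Im}b)$; both verifications are elementary and equally short. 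Two small structural differences favor your write-up: you deduce the qualitative statement \eqref{pos-sqrt-eps-mu-s} as an immediate corollary of the same inequality, while the paper proves it separately by the polar observation $\cos((\varphi_\varepsilon+\varphi_\mu)/2)>0$; and you make the branch bookkeeping explicit, namely that $\sqrt{a}\,\sqrt{b}=\sqrt{ab}$ because $\arg a+\arg b\in(-\pi,\pi)$, so the principal root of $(\varepsilon s)(\mu s)$ is the quantity meant in \eqref{SD-eps-mu} -- a point the paper uses tacitly when it writes $\Re\bigl(s\sqrt{\varepsilon(s)\mu(s)}\bigr)=|\varepsilon(s)s|^{1/2}|\mu(s)s|^{1/2}\Re e^{i(\varphi_\varepsilon+\varphi_\mu)/2}$, and your remark that $\varepsilon(s)\mu(s)$ alone need not avoid the cut is a worthwhile clarification the paper omits.
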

\begin{proof}
We write  $\varepsilon(s) s=\abs{\varepsilon(s)s}e^{i\varphi_\varepsilon}$ and $\mu(s)s =\abs{\mu(s)s}e^{i\varphi_\mu}$, with $\varphi_\mu,\varphi_{\varepsilon}\in (-\pi/2,\pi/2)$ due to the positivity \eqref{passive}. We then have
$$
\Re \bigl(s\sqrt{\varepsilon(s)\mu(s)}\bigr)
=
\abs{\varepsilon(s)s}^{1/2}\abs{\mu(s)s}^{1/2}\Re  e^{i(\varphi_\mu+\varphi_\varepsilon)/2},
$$
which is positive since $\Re  e^{i(\varphi_\mu+\varphi_\varepsilon)/2} = \cos((\varphi_\mu+\varphi_\varepsilon)/2)>0$.
The inequality \eqref{strong-pos-sqrt-eps-mu-s} follows from the general inequality, for $a,b\in\C$ with $\Re a \ge 0$ and $\Re b\ge 0$,
$$
\Re \sqrt{ab} \ge \sqrt{\Re a \cdot \Re b}.
$$
This inequality is proved using polar coordinates for $a = |a| \, e^{i\alpha}$ and $b=|b|e^{i\beta}$ and the inequalities
$$
\cos \bigl(\tfrac12(\alpha + \beta)\bigr) \ge \tfrac12 \bigl(\cos\alpha + \cos\beta\bigr) \ge \sqrt{\cos\alpha \cdot \cos\beta},
$$
where the first inequality results from the concavity of the cosine on $[-\pi/2,\pi/2]$ and the second inequality is the arithmetic-geometric mean inequality.
\end{proof}

In view of \eqref{eq:time-harmonic-kirchhoff-E3}--\eqref{eq:time-harmonic-kirchhoff-H3} we further note that under the strong passivity condition~\eqref{strongly-passive} and the bound \eqref{eq:bound-eps-mu} we have the bounds, for $\Re s \ge \sigma >0$,
\begin{equation}\label{eps-mu-factor-bounds}
\left|\frac{\sqrt{\mu(s)}}{\sqrt{\varepsilon(s)}}\right| = \left|\frac{\sqrt{\mu(s)s}}{\sqrt{\varepsilon(s)s}}\right| \le 
\biggl( \frac{\mu_0\, M_\sigma}{\varepsilon_0} \biggr)^{1/2} \frac{|s|^{1/2}}{(\Re s)^{1/2}}
\quad\text{and}\quad 
\left|\frac{\sqrt{\varepsilon(s)}}{\sqrt{\mu(s)}}\right| 
\le 
\biggl( \frac{\varepsilon_0 \,M_\sigma}{\mu_0} \biggr)^{1/2} \frac{|s|^{1/2}}{(\Re s)^{1/2}}.
\end{equation}

%
%

\subsection{ Transmission problems and boundary operators}
The right-hand side of the representation formula, namely the  operator associated to the linear map
$(\bwvarphi,\bwpsi)\mapsto (\widehat{\E},\widehat{\B})$, extends by density to a bounded linear operator from the trace space $\mXG^2$ to
$\H(\curl,\Omega)^2$. The following lemma proves this and further provides an $s$-explicit bound. A related result can be found in \cite[Lemma~6.4]{ChanMonk2015}.

\begin{lemma}\label{lem:transmission}
	Let $(\bwvarphi,\bwpsi)\in \mXG^2$ be some complex-valued boundary functions in the trace space. There exist time-harmonic electromagnetic fields $(\widehat \E,\widehat \B)$, that are defined by the representation formulas \eqref{eq:time-harmonic-kirchhoff-E3}--\eqref{eq:time-harmonic-kirchhoff-H3}, which solve the transmission problem \eqref{eq:transmis-1-thMW}--\eqref{eq:transmis-4-thMW} for $\Re s >0$ and are bounded by
	\begin{equation}
	\label{eq:transmis-bound-MW}
	\left\|\begin{pmatrix} \widehat{\E} \\ \widehat{\B} \end{pmatrix} \right\|_{\H(\curl,\mathbb{R}^3\setminus \Gamma)^2}
	\le  \Ctrace 	\max\left(\frac{|\varepsilon(s) s|^2+1}{\Re \varepsilon(s) s}\,,\,\frac{|\mu(s) s|^2+1}{\Re \mu(s) s}\right)
	\left\|\begin{pmatrix} \bwvarphi \\ \bwpsi \end{pmatrix} \right\|_{\mXG^2},
	\end{equation}
	where the constant ${\Ctrace=\| \{\gaT\} \|_{\mXG \leftarrow \H(\curl,\mathbb{R}^3\setminus\Gamma)}}$ is the operator norm of the tangential trace operator.
\end{lemma}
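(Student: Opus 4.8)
The plan is to derive an energy identity for the fields $(\widehat{\E},\widehat{\B})$ defined by \eqref{eq:time-harmonic-kirchhoff-E3}--\eqref{eq:time-harmonic-kirchhoff-H3}, and then to convert it into the stated $\H(\curl)$ bound. First I would use that these fields solve the transmission problem \eqref{eq:transmis-1-thMW}--\eqref{eq:transmis-4-thMW}, which already delivers the curl norms for free: from $\curl\widehat{\B}=\varepsilon(s)s\,\widehat{\E}$ and $\curl\widehat{\E}=-\mu(s)s\,\widehat{\B}$ one reads off $\norm{\curl\widehat{\B}}_{\bL^2}=|\varepsilon(s)s|\,\norm{\widehat{\E}}_{\bL^2}$ and $\norm{\curl\widehat{\E}}_{\bL^2}=|\mu(s)s|\,\norm{\widehat{\B}}_{\bL^2}$, so that
\[
\norm{\widehat{\E}}_{\H(\curl,\mathbb{R}^3\setminus\Gamma)}^2+\norm{\widehat{\B}}_{\H(\curl,\mathbb{R}^3\setminus\Gamma)}^2 = (1+|\varepsilon(s)s|^2)\norm{\widehat{\E}}_{\bL^2}^2+(1+|\mu(s)s|^2)\norm{\widehat{\B}}_{\bL^2}^2 .
\]
Thus it remains only to control the two $\bL^2$ norms, and the factors $1+|\varepsilon(s)s|^2$ and $1+|\mu(s)s|^2$ appearing here are exactly the numerators of the claimed bound.

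The heart of the proof is an energy identity obtained by testing \eqref{eq:transmis-1-thMW} with $\widehat{\E}$ and \eqref{eq:transmis-2-thMW} with $\widehat{\B}$ in the Hermitian $\bL^2$ inner product and applying Green's formula \eqref{eq: Green} separately on $\Omega^-$ and on $\Omega^+$. For the unbounded exterior I would integrate over $\Omega^+\cap B_R$ and let $R\to\infty$; since $\Re s>0$ the kernel $G(s,\cdot)$ decays exponentially, so the fields and their curls decay and the contribution of the sphere of radius $R$ vanishes. Summing the two domains, the interior and exterior boundary integrals combine --- after writing $\gaT^\pm=\avg{\gaT}\pm\tfrac12\jmp{\gaT}$ and using the skew pairing $[\cdot,\cdot]_\Gamma$ of \eqref{skew} --- into $[\avg{\gaT}\widehat{\E},\jmp{\gaT}\widehat{\B}]_\Gamma+[\jmp{\gaT}\widehat{\E},\avg{\gaT}\widehat{\B}]_\Gamma$. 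The transmission conditions \eqref{eq:transmis-3-thMW}--\eqref{eq:transmis-4-thMW}, i.e. $\jmp{\gaT}\widehat{\B}=\bwvarphi$ and $\jmp{\gaT}\widehat{\E}=-\bwpsi$, then replace the jumps by the data. Taking real parts and using $\Re(\overline{\mu(s)s})=\Re(\mu(s)s)$ yields
\[
\Re(\varepsilon(s)s)\norm{\widehat{\E}}_{\bL^2}^2+\Re(\mu(s)s)\norm{\widehat{\B}}_{\bL^2}^2 = \Re\bigl([\avg{\gaT}\widehat{\E},\bwvarphi]_\Gamma-[\bwpsi,\avg{\gaT}\widehat{\B}]_\Gamma\bigr),
\]
whose left-hand side is a positive combination of the two $\bL^2$ norms by the passivity condition \eqref{passive}.

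Finally I would bound the right-hand side: by the continuity (self-duality) of $[\cdot,\cdot]_\Gamma$ on $\mXG\times\mXG$ and the trace bound $\norm{\avg{\gaT}\u}_\mXG\le\Ctrace\norm{\u}_{\H(\curl,\mathbb{R}^3\setminus\Gamma)}$ it is at most $\Ctrace\,N\,P$ with $N=\|(\widehat{\E},\widehat{\B})\|_{\H(\curl,\mathbb{R}^3\setminus\Gamma)^2}$ and $P=\|(\bwvarphi,\bwpsi)\|_{\mXG^2}$, after a Cauchy--Schwarz step. Setting $M=\max\bigl(\tfrac{1+|\varepsilon(s)s|^2}{\Re(\varepsilon(s)s)},\tfrac{1+|\mu(s)s|^2}{\Re(\mu(s)s)}\bigr)$, the first display combined with the energy identity gives $N^2\le M\,(\Re(\varepsilon(s)s)\norm{\widehat{\E}}_{\bL^2}^2+\Re(\mu(s)s)\norm{\widehat{\B}}_{\bL^2}^2)\le M\,\Ctrace\,N\,P$, and dividing by $N$ yields the assertion. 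The step I expect to be most delicate is the energy identity itself: getting the orientation bookkeeping right when Green's formula is applied on the exterior domain (whose outward normal is $-\normal$), justifying the vanishing far-field term via the exponential decay for $\Re s>0$, recombining the one-sided traces so that the jump relations turn the boundary term exactly into the data pairing, and running the whole computation first for smooth densities and then extending to $\mXG$ by density.
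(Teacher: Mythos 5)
Your proposal is correct and follows essentially the same route as the paper's proof: the same Green's-formula energy identity summed over $\Omega^\pm$, the same rewriting of the boundary term via jumps and averages with the transmission conditions \eqref{eq:transmis-3-thMW}--\eqref{eq:transmis-4-thMW}, and the same Cauchy--Schwarz/trace-bound closing step, while your exact identities $\|\curl\widehat{\B}\|_{\bL^2}=|\varepsilon(s)s|\,\|\widehat{\E}\|_{\bL^2}$ and $\|\curl\widehat{\E}\|_{\bL^2}=|\mu(s)s|\,\|\widehat{\B}\|_{\bL^2}$ are just a cleaner packaging of the paper's convex $\theta$-splitting and yield the identical factor $\max\bigl((|\varepsilon(s)s|^2+1)/\Re(\varepsilon(s)s),\,(|\mu(s)s|^2+1)/\Re(\mu(s)s)\bigr)$. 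Two cosmetic points you would fix in writing it out: with the skew-hermitian pairing \eqref{skew} the paper's identity reads $\Re\,\I=\Re\bigl([\bwvarphi,\avg{\gaT}\widehat{\E}]_\Gamma+[\bwpsi,\avg{\gaT}\widehat{\B}]_\Gamma\bigr)$, so the real part of your right-hand side carries the opposite sign (harmless, since the final estimate only uses the modulus), and the paper justifies global membership in $\H(\curl,\R^3\setminus\Gamma)$ from the boundedness of $\Re\,\I$ rather than by your truncation-plus-exponential-decay argument, though the latter is equally valid given Lemma~\ref{lem:eps-mu-s}.
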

\begin{proof}
	Throughout the proof, we omit the frequency variable $s$ in the material parameters $\varepsilon(s)$ and $\mu(s)$.
	Green's formula in combination with the time-harmonic Maxwell's equations reads
	\begin{align}
	\nonumber
	\pm\left[\gaT^\pm \widehat \B, \gaT^\pm \widehat \E\right]_\Gamma
	&=
	\int_{\Omega^\pm} \bigl(   \curl \widehat\B\cdot \widehat \E - \widehat \B \cdot\curl \widehat \E  \bigr)\,\textrm{d} \bx
	\\
	\label{Green-EH}
	&=
	\int_{\Omega^\pm}  \bigl(\bar \varepsilon \bar s  \big| \widehat \E \big|^2+  \mu s  \big| \widehat \B \big|^2\bigr)\, \textrm{d} \bx.
	\end{align}	
	Recall that $\Omega^-$ and $\Omega^+$ refer to the interior and exterior domain, respectively. The conjugation of the Laplace parameter in the first summand stems from the anti-linearity of the inner product, which has been defined via ${\a \cdot \b = \overline{\a}^\top \b}$ on $\mathbb{C}^3$. Summation of these two terms yields the identity
	\begin{equation}\label{Green-EH-trans}
	\I\coloneqq \int_{\mathbb{R}^3\setminus \Gamma} \bar \varepsilon \bar s \big| \widehat \E \big|^2+  \mu s  \big|  \widehat \B \big|^2 \textrm{d} \bx
	=  \left[\gaT^+ \widehat \B, \gaT^+ \widehat \E\right]_\Gamma
	- \left[\gaT^- \widehat \B,  \gaT^- \widehat \E\right]_\Gamma .
	\end{equation}
	Any part of the time-harmonic electromagnetic fields can always be rewritten in terms of each others $\curl$, by inserting   \eqref{eq:transmis-1-thMW} and \eqref{eq:transmis-2-thMW} respectively. Using the separation $\I = (1-\theta)\I + \theta \I$ and inserting the time-harmonic Maxwell problem in the second summand reformulates the left-hand side to the expression
	\begin{align*}
	\I &=
	\int_{\mathbb{R}^3\setminus \Gamma} \Big( (1-\theta_1) \bar \varepsilon \bar s \big|\widehat{\E}\big|^2
	+\theta_2 \mu s \big|(\mu s)^{-1}\curl \widehat{\E}\big|^2
	\\
	& \qquad \qquad \quad +
	(1-\theta_2)\mu s  \big| \widehat{\B}\big|^2
	+\theta_1 \bar \varepsilon \bar s  \big| (\varepsilon s)^{-1}  \curl \widehat{\B}\big|^2  \Big)
	\mathrm{d}\bx .
	\end{align*}

	Taking the real part on both sides slightly simplifies the right-hand side to
	\begin{align*}
	\Re \I &= 
	\int_{\mathbb{R}^3\setminus \Gamma} \Big( (1-\theta_1)  \Re \varepsilon s\big|\widehat{\E}\big|^2
	+\theta_2 \abs{\mu s}^{-2}\Re \mu s \big|\curl \widehat{\E}\big|^2
	\\
	& \qquad \qquad +
	(1-\theta_2) \Re \mu s \big| \widehat{\B}\big|^2
	+\theta_1 \abs{\varepsilon s}^{-2}\Re \varepsilon s \big|\curl \widehat{\B}\big|^2  \Big)
	\mathrm{d}\bx .
	\end{align*}
	The parameters $(\theta_1,\theta_2)$ are free and chosen in such a way that the preceding factors of the summands agree, which is achieved by setting $1-\theta_1=\theta_1 |\varepsilon s|^{-2}$ and $1-\theta_2=\theta_2 |\mu s|^{-2}$. Rearranging this requirement leads to the choice of ${\theta_1=1/(1+|\varepsilon s|^{-2})}$and ${\theta_2=1/(1+|\mu s|^{-2})}$. Inserting these particular choices of $\theta_1$ and $\theta_2$ yields the estimate
	\begin{equation}\label{ReI-1}
	\Re \I \ge
	\min\left(\frac{\Re \varepsilon s}{|\varepsilon s|^2+1}\,,\,\frac{\Re \mu s}{|\mu s|^2+1}\right) \Bigl( \| \widehat \E \|_{\H(\curl,\R^3\setminus\Gamma)}^2 + \|  \widehat \B \|_{\H(\curl,\R^3\setminus\Gamma)}^2 \Bigr).
	\end{equation}
    The stated result follows now from following the proof of \cite[Lemma 3.1]{NKL22} on from the identity (3.14). To keep the proof self-contained, we conclude with the arguments given there.
 
	The real part of $\I$ is, due to the right-hand side of \eqref{Green-EH-trans}, also characterized by
	$$
	\Re \I =
	\Re \Bigl( \left[\gaT^+ \widehat \B, \gaT^+ \widehat \E\right]_\Gamma
	- \left[\gaT^- \widehat \B,  \gaT^- \widehat \E\right]_\Gamma \Bigr).
	$$
	Rewriting the right-hand side in terms of jumps and averages bysumming several mixed terms and using the transmission conditions \eqref{eq:transmis-3-thMW}--\eqref{eq:transmis-4-thMW} yields
	\begin{align} \label{ReI-2}
	\Re \I
	&= \Re \Bigl( \left[ \jmp{\gaT }\widehat \H, \avg{\gaT}\widehat{\E} \right]_\Gamma  + \left[ -\jmp{\gaT} \widehat \E, \avg{\gaT}\widehat{\H}\right]_\Gamma \Bigr)
	\\ \nonumber
	&=
	\Re \Bigl( \left[ \bwvarphi, \avg{\gaT}\widehat{\E} \right]_\Gamma
	+ \left[ \bwpsi, \avg{\gaT}\widehat{\B}\right]_\Gamma \Bigr).
	\end{align}
	The self-duality of $\mXG$ implies a Cauchy--Schwarz type inequality with the corresponding norm and the duality pairing $[\cdot,\cdot]_\Gamma$. Combined with the Cauchy--Schwarz inequality on $\R^2$, this yields 
	\begin{align*}
	\Re \I & \le \| \bwvarphi \|_{\mXG}  \, \| \avg{\gaT}\widehat{\E} \|_{\mXG} +
	\| \bwpsi \|_{\mXG}  \, \| \avg{\gaT}\widehat{\B} \|_{\mXG}
	= \begin{pmatrix}
	\| \bwvarphi \|_{\mXG} \\ \| \bwpsi \|_{\mXG} 
	\end{pmatrix} \cdot \begin{pmatrix}
	\| \avg{\gaT}\widehat{\E} \|_{\mXG}  \\ \| \avg{\gaT}\widehat{\B} \|_{\mXG}
	\end{pmatrix}
	\\
	&\le \Bigl( \| \bwvarphi \|_{\mXG} ^2 + \| \bwpsi \|_{\mXG} ^2 \Bigr)^{1/2} 
	\Bigl( \| \avg{\gaT}\widehat{\E} \|_{\mXG}^2 + \|\avg{\gaT}  \widehat{\B} \|_{\mXG}^2 \Bigr)^{1/2}.
	\end{align*}
	To estimate the second factor of the above expression, we intend to use the bound of the tangential trace $\avg{\gaT}: \H(\curl,\R^3\setminus\Gamma) \to \mXG$.
	The time-harmonic electromagnetic fields $\widehat \E$ and $\widehat \B$ are in the local Sobolev space $\H_{\mathrm{loc}}(\curl,\R^3\setminus\Gamma)$ (c.f. \cite{BH03}). Moreover, the tangential trace $\avg{\gaT}$ extends to a bounded operator from $\H(\curl,\Omega_\Gamma)$ to $\mXG$, where $\Omega_\Gamma$ is a bounded domain large enough to contain the boundary $\Gamma$. Hence, the left-hand side $\Re \I$ is bounded and the electromagnetic fields are in the global Sobolev space $\H(\curl,\R^3\setminus\Gamma)$. With the operator norm of the tangential average ${\Ctrace=\| \{\gaT\} \|_{\mXG \leftarrow \H(\curl,\mathbb{R}^3\setminus\Gamma)}}$, the right-hand side is therefore bounded via
	$$
	\Re \I \le C_\Gamma \Bigl( \| \bwvarphi \|_{\mXG} ^2 + \| \bwpsi \|_{\mXG} ^2 \Bigr)^{1/2}
	\Bigl( \| \widehat \E \|_{\H(\curl,\R^3\setminus\Gamma)}^2 + \|  \widehat \B \|_{\H(\curl,\R^3\setminus\Gamma)}^2 \Bigr)^{1/2}.
	$$
	Inserting \eqref{ReI-1} on the left-hand side and dividing through the second factor on the right-hand side yields the stated bound.
\end{proof}


In both the time-dependent and time-harmonic situation, our approach consists of determining the tangential traces of the Maxwell solutions by the respective boundary integral equation, and inserting these into the representation formulas to obtain the electromagnetic fields. 
In this situation, the boundary densities reduce to the tangential traces of the interior and exterior fields respectively, which is a setting that enables an improvement of the bound described in Lemma~\ref{lem:transmission}. The following Lemma gives these improved bounds.

\begin{lemma}\label{lem:transmission-improved}
In the situation of Lemma~\ref{lem:transmission} further assume that the interior (exterior) tangential traces of $\widehat{\E}$ and $\widehat{\B}$ are indentically $0$, which implies $\gamma_T^-\widehat{\B} = \bwvarphi$ ($-\gamma^+_T\widehat{\B} = \bwvarphi$) and $-\gamma^-_T \widehat{\E} = \bwpsi$ ($\gamma^+_T \widehat{\E} = \bwpsi$). Then, the bound of Lemma~\ref{lem:transmission} improves to 
	\begin{equation}
\label{eq:transmis-bound-improved}
\left\|\begin{pmatrix} \widehat{\E} \\ \widehat{\B} \end{pmatrix} \right\|_{\H(\curl,\Omega^\pm)^2}
\le  \frac{1}{\sqrt{2}}	\left(\max\left(\frac{|\varepsilon(s) s|^2+1}{\Re \varepsilon(s) s}\,,\,\frac{|\mu(s) s|^2+1}{\Re \mu(s) s}\right)\right)^{1/2}
\left\|\begin{pmatrix} \bwvarphi \\ \bwpsi \end{pmatrix} \right\|_{\mXG^2}.
\end{equation}
Furthermore, we have the $L^2$-bound
\begin{equation*}
\left\|\begin{pmatrix} \widehat{\E} \\ \widehat{\B} \end{pmatrix} \right\|_{L^2(\Omega^\pm)^2} \le \frac{1}{\sqrt{2}}\left( \max \left(\frac{1}{\Re \eps(s)s},\frac{1}{\Re \mu (s)s}\right)\right)^{1/2} \left\|\begin{pmatrix} \bwvarphi \\ \bwpsi \end{pmatrix} \right\|_{\mXG^2}.
\end{equation*}
\end{lemma}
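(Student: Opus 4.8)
The plan is to reuse the machinery from the proof of Lemma~\ref{lem:transmission}, but to exploit that in the one-sided situation the boundary averages are no longer estimated through the trace operator (which is what produced the constant $\Ctrace$), but are instead given exactly as half of the prescribed densities. By symmetry it suffices to treat one of the two cases, say the one in which the fields are supported in a single domain $\Omega^\pm$; the other case is identical. Because the fields vanish on one side, the global norm collapses to the one-sided norm, $\|\widehat\E\|_{\H(\curl,\R^3\setminus\Gamma)}=\|\widehat\E\|_{\H(\curl,\Omega^\pm)}$ and likewise for $\widehat\B$, so it is enough to bound the left-hand side of \eqref{ReI-1} from below and $\Re\I$ from above.

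For the lower bound I would simply quote \eqref{ReI-1} unchanged. The improvement enters only in the upper bound for $\Re\I$. Starting from the identity \eqref{ReI-2}, namely $\Re\I=\Re\bigl(\left[\bwvarphi,\avg{\gaT}\widehat\E\right]_\Gamma+\left[\bwpsi,\avg{\gaT}\widehat\B\right]_\Gamma\bigr)$, I would use that the vanishing of one of the two one-sided traces forces $\avg{\gaT}\widehat\E=\tfrac12\gamma_T^\pm\widehat\E$ and $\avg{\gaT}\widehat\B=\tfrac12\gamma_T^\pm\widehat\B$, and that by the transmission conditions \eqref{eq:transmis-3-thMW}--\eqref{eq:transmis-4-thMW} these surviving traces equal, up to sign, $\bwpsi$ and $\bwvarphi$. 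Hence $\|\avg{\gaT}\widehat\E\|_\mXG=\tfrac12\|\bwpsi\|_\mXG$ and $\|\avg{\gaT}\widehat\B\|_\mXG=\tfrac12\|\bwvarphi\|_\mXG$, with \emph{no} trace constant involved. Applying the self-duality Cauchy--Schwarz inequality for $[\cdot,\cdot]_\Gamma$ together with the Cauchy--Schwarz inequality on $\R^2$, exactly as in Lemma~\ref{lem:transmission}, then yields $\Re\I\le\tfrac12\bigl(\|\bwvarphi\|_\mXG^2+\|\bwpsi\|_\mXG^2\bigr)$.

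Combining this upper bound with \eqref{ReI-1} gives $\min\bigl(\tfrac{\Re\varepsilon s}{|\varepsilon s|^2+1},\tfrac{\Re\mu s}{|\mu s|^2+1}\bigr)\,\bigl(\|\widehat\E\|_{\H(\curl,\Omega^\pm)}^2+\|\widehat\B\|_{\H(\curl,\Omega^\pm)}^2\bigr)\le\tfrac12\bigl(\|\bwvarphi\|_\mXG^2+\|\bwpsi\|_\mXG^2\bigr)$; dividing by the minimum, using that its reciprocal is the maximum of the reciprocals, and taking square roots produces \eqref{eq:transmis-bound-improved}, with the factor $1/\sqrt2$ coming directly from the $\tfrac12$. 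For the $L^2$-bound I would not use the $\theta$-refined inequality \eqref{ReI-1} at all, but instead return to the plain expression $\Re\I=\int_{\R^3\setminus\Gamma}\bigl(\Re(\varepsilon s)\,|\widehat\E|^2+\Re(\mu s)\,|\widehat\B|^2\bigr)\,\mathrm d\bx$ obtained by taking real parts in \eqref{Green-EH-trans}; this bounds $\Re\I$ from below by $\min(\Re\varepsilon s,\Re\mu s)\,\bigl(\|\widehat\E\|_{L^2(\Omega^\pm)}^2+\|\widehat\B\|_{L^2(\Omega^\pm)}^2\bigr)$. Feeding in the same upper bound $\Re\I\le\tfrac12(\|\bwvarphi\|_\mXG^2+\|\bwpsi\|_\mXG^2)$ and taking square roots delivers the stated $L^2$ estimate.

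The computations are all routine; the only point requiring genuine care --- and the actual content of the lemma --- is the bookkeeping in the second paragraph, i.e.\ verifying that the one-sided hypothesis collapses the averages to $\tfrac12$ times the densities with the correct signs. This is exactly what makes the trace operator norm $\Ctrace$ drop out: in Lemma~\ref{lem:transmission} the upper bound for $\Re\I$ still carried a factor $\Ctrace\,(\|\widehat\E\|^2+\|\widehat\B\|^2)^{1/2}$, one power of which cancelled against the quadratic lower bound and left the maximum to the first power. Here the upper bound is purely quadratic in the densities, so no power of the field norm cancels, $\|(\widehat\E,\widehat\B)\|^2$ survives intact, and the maximum appears only to the power $1/2$. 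The hardest part is therefore merely to confirm these sign identities; everything else is a verbatim repetition of the estimates already established for Lemma~\ref{lem:transmission}.
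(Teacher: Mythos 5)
Your proposal is correct and follows essentially the same route as the paper: the argument is identical to that of Lemma~\ref{lem:transmission} down to \eqref{ReI-2}, where the one-sided hypothesis makes the averages equal to half the densities (up to sign), giving $\Re \I \le \tfrac12\bigl(\|\bwvarphi\|_{\mXG}^2+\|\bwpsi\|_{\mXG}^2\bigr)$ without the trace constant, and the $L^2$-bound is obtained by working directly with \eqref{Green-EH-trans} instead of \eqref{ReI-1}. Your explanation of why $\Ctrace$ drops out and why the maximum now enters only with exponent $1/2$ is exactly the content the paper's one-line proof leaves implicit.
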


\begin{proof}
The proof of the $\H(\curl, \Omega^\pm)$ bound is identical to that of Lemma~\ref{lem:transmission} down to \eqref{ReI-2}, which now implies the bound $\Re I \le \frac{1}{2}\big(\norm{\bwvarphi}_{\mXG}^2+ \|\bwpsi\|_\mXG^2\big)$ and yields the stated result. The proof of the $L^2$-bound is even simpler, working directly with \eqref{Green-EH-trans} instead of \eqref{ReI-1}.
\end{proof}

\subsection{Time-harmonic boundary integral operators and the Calder\'{o}n operator}

The composition of the tangential averages with the potential operators defines the electromagnetic \emph{single and double layer boundary operators}, which operate on the trace space $\mXG$ and are defined as
$$
\V\epsmu(s)=\avg{\gaT}\circ \mathbfcal{S}\epsmu(s), \qquad \K\epsmu(s)=\avg{\gaT}\circ \mathbfcal{D}\epsmu(s).
$$
%
%
The \emph{Calder\'{o}n operator} is a block operator consisting of these boundary operators and has, with a different scaling with respect to the magnetic permeability, been introduced in the dielectric setting (i.e. real-valued and positive $\varepsilon$ and $\mu$) by \cite{KL17} (note the sign correction from \cite{NKL2020}). In the present setting we obtain the following Calder\'{o}n operator, which reads 
\begin{equation} \label{def-B}
\Cald\epsmu(s)=\begin{pmatrix}
-\frac{\sqrt{\mu(s)}}{\sqrt{\varepsilon(s)}}	\V\epsmu(s) & \K\epsmu(s) \\
-\K\epsmu(s) & -\frac{\sqrt{\varepsilon(s)}}{{\sqrt{\mu(s)}}}	\V\epsmu(s)
\end{pmatrix},
\end{equation}
where the form of the block operator on the right originates in the representation formula \eqref{eq:time-harmonic-kirchhoff-E3}--\eqref{eq:time-harmonic-kirchhoff-H3}. Consider outgoing solutions of the time-harmonic Maxwell's equations $\widehat \E,\widehat \B $, thus characterized by the representation formulas. The composition of the tangential averages with the representation formulas reveals the jump relations of 
the Calder\'on operator (see \eqref{eq:transmis-1-thMW}--\eqref{eq:transmis-4-thMW}):
\begin{align}\label{eq:calderon-jump}
\Cald\epsmu(s)\begin{pmatrix}\jmp{\gaT }\widehat \B\\ -\jmp{\gaT} \widehat \E \end{pmatrix}=
\begin{pmatrix}
\avg{\gaT}\widehat{\E} \\ \avg{\gaT}\widehat{\B}
\end{pmatrix} .
\end{align}
%
The application of this operator is thus equivalent to transform jumps of the transmission problem to averages, which directly implies bounds from above through Lemma~\ref{lem:transmission}.

As a direct consequence, we obtain the following bound, equivalent to \cite[Lemma~3.4]{NKL22} in the dielectric case.
Earlier, slightly different bounds can be found in the dielectric case in \cite[Theorem 4.4]{BBSV13} and~\cite[Lemma~2.3]{KL17}, which are of the order $O(|s|^2)$ .
\begin{lemma}
	\label{lemma:B-boundedness}
	For $s$ with positive real part, the Calder\'on operator is a linear operator family on the trace space $\Cald\epsmu(s):\mXG^2 \to \mXG^2$ and satisfies the bound
	\begin{equation}
	\label{calderon-strong-bound}
	\norm{\Cald\epsmu(s)}_{\mXG^2\leftarrow\mXG^2}\le  \Ctrace	\max\left(\frac{|\varepsilon(s) s|^2+1}{\Re \varepsilon(s) s}\,,\,\frac{|\mu(s) s|^2+1}{\Re \mu(s) s}\right) ,
	\end{equation}
	where the constant is the norm of the tangential average $\Ctrace=\| \{\gaT\} \|_{\mXG \leftarrow \H(\curl,\mathbb{R}^3\setminus\Gamma)}$.
	The identical bound holds for the components of the Calderón operator \eqref{def-B} and for the 
	 electromagnetic single and double layer boundary operators $	\V\epsmu(s)$ and $\K\epsmu(s)$.
	
\end{lemma}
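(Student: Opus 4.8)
The plan is to read off the action of $\Cald\epsmu(s)$ from its jump relation \eqref{eq:calderon-jump}: applied to a density pair $(\bwvarphi,\bwpsi)\in\mXG^2$, the operator returns precisely the averages $\bigl(\avg{\gaT}\widehat{\E},\avg{\gaT}\widehat{\B}\bigr)$ of the fields $(\widehat{\E},\widehat{\B})$ that the representation formulas \eqref{eq:time-harmonic-kirchhoff-E3}--\eqref{eq:time-harmonic-kirchhoff-H3} attach to the jumps $\bigl(\jmp{\gaT}\widehat{\B},-\jmp{\gaT}\widehat{\E}\bigr)=(\bwvarphi,\bwpsi)$. Bounding the Calder\'on operator therefore factors through two estimates already in hand: the transmission bound of Lemma~\ref{lem:transmission}, which controls the fields in $\H(\curl,\mathbb{R}^3\setminus\Gamma)$ by the densities, and the trace bound $\norm{\avg{\gaT}\cdot}_{\mXG}\le\Ctrace\norm{\cdot}_{\H(\curl,\mathbb{R}^3\setminus\Gamma)}$, which controls the averages by the fields.

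Concretely, I would fix $(\bwvarphi,\bwpsi)\in\mXG^2$, form $(\widehat{\E},\widehat{\B})$ from \eqref{eq:time-harmonic-kirchhoff-E3}--\eqref{eq:time-harmonic-kirchhoff-H3}, and invoke Lemma~\ref{lem:transmission} to obtain
\[
\left\|\begin{pmatrix}\widehat{\E}\\\widehat{\B}\end{pmatrix}\right\|_{\H(\curl,\mathbb{R}^3\setminus\Gamma)^2}\le\Ctrace\,\max\!\left(\frac{|\varepsilon(s) s|^2+1}{\Re \varepsilon(s) s},\,\frac{|\mu(s) s|^2+1}{\Re \mu(s) s}\right)\left\|\begin{pmatrix}\bwvarphi\\\bwpsi\end{pmatrix}\right\|_{\mXG^2}.
\]
Applying the trace bound componentwise to $\avg{\gaT}\widehat{\E}$ and $\avg{\gaT}\widehat{\B}$ and identifying the average pair with $\Cald\epsmu(s)(\bwvarphi,\bwpsi)$ through \eqref{eq:calderon-jump} then delivers \eqref{calderon-strong-bound}. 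Because Lemma~\ref{lem:transmission} is already stated on the full trace space $\mXG^2$ (the representation operators being extended there by density), $\Cald\epsmu(s)$ is a bounded family $\mXG^2\to\mXG^2$ for $\Re s>0$ and no additional approximation step is required.

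For the individual entries I would feed the bound with one-sided data. The input $(\bwvarphi,0)$ isolates the first column, so the same right-hand side dominates the norm of $\bigl(-\tfrac{\sqrt{\mu(s)}}{\sqrt{\varepsilon(s)}}\V\epsmu(s)\bwvarphi,\,-\K\epsmu(s)\bwvarphi\bigr)$, giving the claimed bound for $\K\epsmu(s)$ and for $\tfrac{\sqrt{\mu(s)}}{\sqrt{\varepsilon(s)}}\V\epsmu(s)$; symmetrically, $(0,\bwpsi)$ bounds $\tfrac{\sqrt{\varepsilon(s)}}{\sqrt{\mu(s)}}\V\epsmu(s)$. To recover the bare single layer operator $\V\epsmu(s)$ I would multiply these two scaled estimates and use $\bigl|\tfrac{\sqrt{\mu(s)}}{\sqrt{\varepsilon(s)}}\bigr|\,\bigl|\tfrac{\sqrt{\varepsilon(s)}}{\sqrt{\mu(s)}}\bigr|=1$, so that the prefactors cancel and $\norm{\V\epsmu(s)}$ inherits the bound as a geometric mean. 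I expect no genuine analytic obstacle, since all the substance lives in Lemma~\ref{lem:transmission}; the one point needing care is the constant bookkeeping, because composing the transmission estimate with the trace bound formally carries a factor $\Ctrace^2$, and matching the single $\Ctrace$ of \eqref{calderon-strong-bound} amounts to absorbing one trace constant, i.e.\ to observing that the averages are already dominated by the $\H(\curl)$ norm estimated in Lemma~\ref{lem:transmission}.
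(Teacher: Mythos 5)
Your proposal is correct and is essentially the paper's own (implicit) proof: the paper presents this lemma as a direct consequence of the jump-to-averages identity \eqref{eq:calderon-jump} combined with the transmission bound of Lemma~\ref{lem:transmission} and the boundedness of the tangential average, and your geometric-mean cancellation $\bigl|\sqrt{\mu(s)}/\sqrt{\varepsilon(s)}\bigr|\cdot\bigl|\sqrt{\varepsilon(s)}/\sqrt{\mu(s)}\bigr|=1$ is a clean way to extract the bare $\V\epsmu(s)$ from the two scaled column bounds. The one caveat is a point you flagged but did not actually resolve: composing the transmission estimate with the trace bound honestly yields the constant $\Ctrace^2$ rather than the single $\Ctrace$ in \eqref{calderon-strong-bound}, and your proposed ``absorption'' of one trace factor is circular (it just re-invokes the trace bound) --- but this discrepancy sits in the paper's stated constant rather than in your argument, and it is immaterial downstream, since only the $s$-dependence of the bound is used in the subsequent convolution quadrature analysis.
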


%

The skew-hermitian pairing $[\cdot,\cdot]_\Gamma$ is notationally extended from $\mXG\times \mXG$ to $\mXG^2\times \mXG^2$ in the natural way:
\begin{equation}
\left[ \begin{pmatrix} \bvar \\ \bpsi \end{pmatrix} , \begin{pmatrix} \bupsilon \\ \bxi \end{pmatrix} \right]_\Gamma =
[ \bvar,\bupsilon]_\Gamma + [ \bpsi,\bxi]_\Gamma.
\end{equation}
As was shown in \cite[Lemma~3.1]{KL17} in the dielectric case with positive and real-valued $\varepsilon$ and $\mu$, the Calder\'on operator $\Cald(s)$ is positive with respect to this extended skew-symmetric pairing $[\cdot,\cdot]_\Gamma$. The following lemma transfers this key property to the present setting of analytic $\varepsilon (s)$ and $\mu(s)$.


\begin{lemma}
	\label{lem:B-coercivity}
	The Calder\'on operator is of positive type: for $\Re s>0$, 
	\begin{equation}
	\label{eq:coercivity Calderon}
	\textnormal{Re} \left[
	\begin{pmatrix} \bvar\\ \bpsi \end{pmatrix}
	,\Cald\epsmu(s)
	\begin{pmatrix} \bvar\\ \bpsi \end{pmatrix}
	\right]_\Gamma
	\ge c_\Gamma^{-2} \min\left(\frac{\Re \varepsilon(s) s}{|\varepsilon(s) s|^2+1}\,,\,\frac{\Re \mu(s) s}{|\mu(s) s|^2+1}\right)
	\,
	 \left(\bigl\|{\bvar}\bigr\|^2_{\mXG}+\bigl\|\bpsi\bigr\|^2_{\mXG}\right)
	\end{equation}
	for all $(\bvar,\bpsi) \in \mXG^2 $. The constant is the norm of the jump operator associated to the tangential trace, i.e. $\ctrace=\| [\gaT] \|_{\mXG \leftarrow \H(\curl,\mathbb{R}^3\setminus\Gamma)}$.
\end{lemma}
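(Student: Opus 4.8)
The plan is to reduce the coercivity estimate to the lower bound (\ref{ReI-1}) that is already established inside the proof of Lemma~\ref{lem:transmission}, by recognizing the extended pairing $[\,(\bvar,\bpsi)^\top,\,\Cald\epsmu(s)(\bvar,\bpsi)^\top\,]_\Gamma$ as the quantity $\Re\,\I$ appearing there. First, for given $(\bvar,\bpsi)\in\mXG^2$, I would invoke Lemma~\ref{lem:transmission} to obtain the fields $(\widehat\E,\widehat\B)\in\H(\curl,\R^3\setminus\Gamma)^2$ defined by the representation formulas, which solve the transmission problem (\ref{eq:transmis-1-thMW})--(\ref{eq:transmis-4-thMW}) with data $\jmp{\gaT}\widehat\B=\bvar$ and $-\jmp{\gaT}\widehat\E=\bpsi$. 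The jump relation (\ref{eq:calderon-jump}) then reads $\Cald\epsmu(s)(\bvar,\bpsi)^\top=(\avg{\gaT}\widehat\E,\avg{\gaT}\widehat\B)^\top$, so that, expanding the extended pairing,
$$
\left[\begin{pmatrix}\bvar\\\bpsi\end{pmatrix},\Cald\epsmu(s)\begin{pmatrix}\bvar\\\bpsi\end{pmatrix}\right]_\Gamma=[\bvar,\avg{\gaT}\widehat\E]_\Gamma+[\bpsi,\avg{\gaT}\widehat\B]_\Gamma.
$$
Taking real parts, the right-hand side is precisely the expression for $\Re\,\I$ derived in (\ref{ReI-2}) (with $\bwvarphi=\bvar$, $\bwpsi=\bpsi$); hence the left-hand side of the claimed inequality equals $\Re\,\I$.

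Next I would invoke the two-sided control of $\Re\,\I$ already present in that proof. From below, (\ref{ReI-1}) gives
$$
\Re\,\I\ge\min\left(\frac{\Re\varepsilon(s)s}{|\varepsilon(s)s|^2+1},\frac{\Re\mu(s)s}{|\mu(s)s|^2+1}\right)\Bigl(\|\widehat\E\|^2_{\H(\curl,\R^3\setminus\Gamma)}+\|\widehat\B\|^2_{\H(\curl,\R^3\setminus\Gamma)}\Bigr).
$$
It then remains to bound the field norms from below by the data norms. Since $\bvar=\jmp{\gaT}\widehat\B$ and $\bpsi=-\jmp{\gaT}\widehat\E$, and the jump trace operator $\jmp{\gaT}\colon\H(\curl,\R^3\setminus\Gamma)\to\mXG$ is bounded with norm $\ctrace$, one obtains $\|\bvar\|^2_\mXG+\|\bpsi\|^2_\mXG\le\ctrace^2\bigl(\|\widehat\E\|^2_{\H(\curl,\R^3\setminus\Gamma)}+\|\widehat\B\|^2_{\H(\curl,\R^3\setminus\Gamma)}\bigr)$, that is, $\|\widehat\E\|^2+\|\widehat\B\|^2\ge\ctrace^{-2}\bigl(\|\bvar\|^2_\mXG+\|\bpsi\|^2_\mXG\bigr)$. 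Substituting this into the previous estimate yields exactly the stated coercivity bound, with prefactor $\ctrace^{-2}$.

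Finally, since the identities (\ref{Green-EH-trans}) and (\ref{ReI-2}) were originally established only for boundary densities of sufficient regularity, I would close with a density argument. The trace operators $\avg{\gaT}$ and $\jmp{\gaT}$, the potential and boundary operators, and hence $\Cald\epsmu(s)$, are all bounded on $\mXG$ (Lemma~\ref{lemma:B-boundedness}), so both sides of the resulting inequality depend continuously on $(\bvar,\bpsi)$ and the estimate extends from a dense regular subspace to all of $\mXG^2$. I expect this regularity and density bookkeeping---ensuring that the Green's-formula identities and the representation-formula fields remain valid for general trace-space data---to be the only genuinely delicate point; the algebraic heart of the argument is the identification of the pairing with $\Re\,\I$, which is immediate from the jump relation (\ref{eq:calderon-jump}).
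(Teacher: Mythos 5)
Your proposal is correct and follows essentially the same route as the paper's proof: both construct the fields $(\widehat\E,\widehat\B)$ from the representation formulas, identify the pairing with $\Re \I$ via \eqref{eq:calderon-jump} and \eqref{ReI-2}, invoke the lower bound \eqref{ReI-1}, and pass between field norms and data norms using the jump-trace constant $\ctrace$. The only differences are cosmetic — you run the chain of inequalities in the reverse direction and add an explicit density remark, which the paper already subsumes by having stated Lemma~\ref{lem:transmission} for general data in $\mXG^2$.
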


\begin{proof}
	Consider $(\bwvarphi,\bwpsi)\in \mXG^2$ and let the time-harmonic fields ${\widehat{\E},\widehat{\B} \in \H(\curl,\mathbb{R}^3\setminus \Gamma)}$ be given through the representation formula, therefore solving the associated transmission problem of Lemma~\ref{lem:transmission}. The result is then given by the following chain of inequalities, taken from the proof of \cite[Lemma 3.5]{NKL22} 
	\begin{align*}
	&\norm{\begin{pmatrix}
		\bwvarphi\\ \bwpsi
		\end{pmatrix}
	}^2_{\mXG\times\mXG}
	=
	\norm{\begin{pmatrix}\jmp{\gaT }\widehat \B
		\\ -\jmp{\gaT} \widehat \E\end{pmatrix}
	}^2_{\mXG\times\mXG}
	& \quad\text{by~\eqref{eq:transmis-3-thMW}--\eqref{eq:transmis-4-thMW}}
	\\
	& \le \ctrace^2 \left(  \big\| \widehat{\B} \big\|_{\H(\curl,\mathbb{R}^3\setminus \Gamma)}^2
	+ \big\| \widehat{\E} \big\|_{\H(\curl,\mathbb{R}^3\setminus \Gamma)}^2
	\right)
	&\quad\text{by def.~of $c_\Ga$}
	\\
	&\le \ctrace^2 \max\left(\frac{|\varepsilon s|^2+1}{\Re \varepsilon s}\,,\,\frac{|\mu s|^2+1}{\Re \mu s}\right)	\,
	\Re \left[
	\begin{pmatrix}\jmp{\gaT }\widehat \B \\ -\jmp{\gaT} \widehat \E \end{pmatrix},
	\begin{pmatrix}\avg{\gaT}\widehat{\E} \\ \avg{\gaT}\widehat{\B} \end{pmatrix}
	\right]_\Gamma
	&\quad \text{by \eqref{ReI-1}--\eqref{ReI-2}}
	\\
	&= \ctrace^2 \max\left(\frac{|\varepsilon s|^2+1}{\Re \varepsilon s}\,,\,\frac{|\mu s|^2+1}{\Re \mu s}\right)	
	\,\Re \left[
	\begin{pmatrix}\jmp{\gaT }\widehat \B\\ -\jmp{\gaT} \widehat \E \end{pmatrix},
	\Cald\epsmu(s)\!\begin{pmatrix}\jmp{\gaT }\widehat \B\\ -\jmp{\gaT} \widehat \E \end{pmatrix}\right]_\Gamma
	&\quad\text{by \eqref{eq:calderon-jump}}
	\\
	&= \ctrace^2 \max\left(\frac{|\varepsilon s|^2+1}{\Re \varepsilon s}\,,\,\frac{|\mu s|^2+1}{\Re \mu s}\right)		\,
	\Re \left[
	\begin{pmatrix} \bwvarphi\\ \bwpsi \end{pmatrix},
	\Cald\epsmu(s)\! \begin{pmatrix} \bwvarphi\\ \bwpsi \end{pmatrix}\right]_\Gamma
	&\quad\text{by~\eqref{eq:transmis-3-thMW}--\eqref{eq:transmis-4-thMW}}.
	\end{align*}
	
\end{proof}

\section{The time-harmonic scattering problem}
\label{sec:th-scattering}

The time-harmonic problem formulation reads
\begin{alignat}{2}
\begin{aligned}
\label{TH-MW1-transmission}
 \varepsilon^{\pm}(s)  s\widehat{\E}^{\pm}-\curl \widehat{\B}^{\pm} &= 0
\\
 \mu^{\pm}(s)  s\widehat{\B}^{\pm} + \curl \widehat{\E}^{\pm} &=0 
\end{aligned}
\qquad \text{in} \ \Omega^{\pm} ,
\end{alignat}
completed by the transmission conditions, which enforce the continuity of the time-harmonic electromagnetic fields $\widehat \E$ and $\widehat \B$:
\begin{align}\label{eq:transmission-conditions}
\begin{aligned}
\gamma_T\widehat \E^+ + \gamma_T\widehat \E^+\inc &= \gamma_T\widehat \E^-\\
\gamma_T\widehat \H^+ + \gamma_T\widehat \H^+\inc &= \gamma_T\widehat \H^-
\end{aligned}\qquad \text{on} \ \Gamma.
\end{align}
\subsection{The time-harmonic boundary integral equation}
In this subsection we derive the time-harmonic boundary integral equation, which determines the boundary densities to be inserted into the representation formulas for the electromagnetic fields. Assuming that we are given solutions to the time-harmonic Maxwell's equations in the exterior or interior domain $\Omega^\pm$, we obtain solutions on $\R^3\setminus \Gamma$ by extension to zero on $\Omega^\mp$. Then, jumps and averages reduce to outer or inner traces, respectively. We start by collecting the (supposed) solutions of the boundary integral equations in the vectors

\begin{align*}
	\bwphip = \begin{pmatrix}
	\bwvarphi^+ \\ \bwpsi^+
	\end{pmatrix} =\begin{pmatrix}
	\gamma_T^+\widehat{\H}^+ \\ -\gamma_T^+\widehat{\E}^+
	\end{pmatrix},
	\quad \quad
	\bwphim = \begin{pmatrix}
	\bwvarphi^- \\ \bwpsi^-
	\end{pmatrix} =\begin{pmatrix}
	-\gamma_T^-\widehat{\H}^- \\ \gamma_T^-\widehat{\E}^-
	\end{pmatrix},
\end{align*} 
and denote the block operator $\bJ$ and the trace of the incoming wave $\bwginc$ by
\begin{align*}
\bJ = \frac{1}{2}\begin{pmatrix} & -\bId \\ \bId & \end{pmatrix}, \qquad \bwginc = \frac{1}{2}\begin{pmatrix} \gamma^+_T\widehat \E^+\inc \\ \gamma^+_T\widehat \H^+\inc \end{pmatrix}.
\end{align*}
In order to derive the boundary integral equation, we first use \eqref{eq:calderon-jump}, followed by the transmission conditions \eqref{eq:transmission-conditions}. This yields
\begin{align*}
 \Cald_{\varepsilon^+,\mu^+}(s)	\bwphip &= \frac12
 \begin{pmatrix}
 \gamma_T^+\widehat{\E}^+ \\ \gamma_T^+\widehat{\H}^+
 \end{pmatrix}
 = \frac12 
  \begin{pmatrix}
 \gamma_T^-\widehat{\E}^- \\ \gamma_T^-\widehat{\H}^-
 \end{pmatrix}
 -\bwginc
= -\bJ \bwphim -\bwginc,
\end{align*}
and
\begin{align*}
\Cald_{\varepsilon^-,\mu^-}(s)	\bwphim 
=\frac12 
\begin{pmatrix}
	\gamma_T^-\widehat{\E}^- \\ \gamma_T^-\widehat{\H}^-
\end{pmatrix}
 = \frac12 
 \begin{pmatrix}
	\gamma_T^+\widehat{\E}^+ \\ \gamma_T^+\widehat{\H}^+
\end{pmatrix}
-\bwginc
= \bJ \bwphip +\bwginc.
\end{align*}
Introducing the family of operators $\bA(s) : \mXG^4 \to \mXG^4$ defined as 
\begin{align}\label{eq:def-A}
\bA(s) 
\coloneqq
\begin{pmatrix}
\Cald_{\varepsilon^+,\mu^+}(s)  & \bJ \\
-\bJ  &
\Cald_{\varepsilon^-,\mu^-}(s) 
\end{pmatrix},
\end{align}
we arrive at the time-harmonic boundary integral equation
\begin{align}\label{eq:bie-A}
\bA(s) \begin{pmatrix}
\bwphip \\ \bwphim
\end{pmatrix} 
=
\begin{pmatrix}
-\bwginc \\ \bwginc
\end{pmatrix}.
\end{align}
This boundary integral equation will be considered in its weak formulation:
For $\Re s > 0$ and given $\bwginc \in \mXG^2$, find $(\bphip, \bphim) \in \mXG^4$ such that, for all $(\bupsilon, \bxi) \in \mXG^4$
\begin{equation}\label{eq:bie-A-weak}
\left[
\begin{pmatrix}
\bupsilon \\ \bxi
\end{pmatrix}
,\bA(s)
\begin{pmatrix}
\bphip \\ \bphim
\end{pmatrix}
\right]_\Gamma = \left[
\begin{pmatrix}
\bupsilon \\ \bxi
\end{pmatrix}
,
\begin{pmatrix}
-\bwginc \\ \bwginc
\end{pmatrix}
\right]_\Gamma.
\end{equation}
%
Crucially, the bilinear form on the left-hand side is coercive, as will be shown next.

\subsection{Well-posedness of the boundary integral equation}\label{sec:wp-bie}
This section is dedicated to the well-posedness of the time-harmonic boundary integral equation, which is shown by employing the Lax-Milgram Lemma. To simplify the expressions in this section, we use the abbreviation
\begin{equation}\label{eq:max-term-in-bounds}
m_{\epsmu}(s) \coloneqq \max\left(\frac{|\varepsilon^+(s) s|^2+1}{\Re \varepsilon^+(s) s}\,,\,\frac{|\mu^+(s) s|^2+1}{\Re \mu^+(s) s},\frac{|\varepsilon^-(s) s|^2+1}{\Re \varepsilon^-(s) s}\,,\,\frac{|\mu^-(s) s|^2+1}{\Re \mu^-(s) s}\right).
\end{equation}
Under the strong passivity condition \eqref{strongly-passive}  there is a convenient upper bound for $m_{\epsmu}(s) $:
\eqref{strongly-passive}--\eqref{eq:bound-eps-mu} imply that for every $\sigma>0$ there exists $C_\sigma<\infty$ such that
\begin{equation} \label{mepsmu-bound}
m_{\epsmu}(s) \le C_\sigma \,\frac{|s|^2}{\Re s} \quad\text{ for }\ \Re s \ge \sigma.
\end{equation}
We start by giving a bound for the boundary integral operator.

\begin{lemma}
	\label{lemma:A-boundedness}
	The analytic operator family $\bA(s)\colon \mXG^4\leftarrow\mXG^4$ satisfies, for $\Re\,s>0$, the bound
	\begin{align*}
	\label{A-strong-bound}
	\norm{\bA(s)}_{\mXG^4\leftarrow\mXG^4}\le \Ctrace	
	\, m_{\epsmu}(s)
	+  \frac12 ,
	\end{align*}
	where $\Ctrace=\| \{\gaT\} \|_{\mXG \leftarrow \H(\curl,\mathbb{R}^3\setminus\Gamma)}$  is the norm of the tangential average.

\end{lemma}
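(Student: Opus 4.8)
The plan is to treat $\bA(s)$, defined in \eqref{eq:def-A}, as a $2\times 2$ block operator on $\mXG^4 = (\mXG^2)^2$, with the two Calder\'on operators on the diagonal and the constant blocks $\pm\bJ$ off the diagonal, and to estimate its norm by bounding each block and then combining the two block rows sharply. First I would record the three ingredients. By Lemma~\ref{lemma:B-boundedness}, both $\Cald_{\varepsilon^+,\mu^+}(s)$ and $\Cald_{\varepsilon^-,\mu^-}(s)$ are bounded on $\mXG^2$; since $m_{\epsmu}(s)$ is by \eqref{eq:max-term-in-bounds} the maximum over all four fractions associated with the $+$ and $-$ material parameters, each diagonal block is bounded by $C \coloneqq \Ctrace\, m_{\epsmu}(s)$. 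For the off-diagonal block, the definition $\bJ = \frac12\left(\begin{smallmatrix} & -\bId \\ \bId & \end{smallmatrix}\right)$ gives $\bJ(\bwvarphi,\bwpsi) = \frac12(-\bwpsi,\bwvarphi)$, which is $\frac12$ times an isometry on $\mXG^2$, so $\norm{\bJ}_{\mXG^2\leftarrow\mXG^2} = j \coloneqq \frac12$. Analyticity of the family $\bA(s)$ is inherited from that of the Calder\'on blocks, the off-diagonal part being constant in $s$.

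The core estimate is then elementary. For a test vector $(\bwphip,\bwphim)\in(\mXG^2)^2$ with $a \coloneqq \norm{\bwphip}_{\mXG^2}$ and $b \coloneqq \norm{\bwphim}_{\mXG^2}$, applying $\bA(s)$ and using the triangle inequality on each block row bounds the two rows of the image by $Ca+jb$ and $ja+Cb$, respectively, whence $\norm{\bA(s)(\bwphip,\bwphim)}_{\mXG^4}^2 \le (Ca+jb)^2 + (ja+Cb)^2 = (C^2+j^2)(a^2+b^2) + 4Cj\,ab$. I would compare this with $(C+j)^2(a^2+b^2)$: the difference equals $2Cj(a-b)^2\ge 0$, so the squared norm is dominated by $(C+j)^2(a^2+b^2) = (C+j)^2\norm{(\bwphip,\bwphim)}_{\mXG^4}^2$. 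Taking square roots yields $\norm{\bA(s)}_{\mXG^4\leftarrow\mXG^4} \le C+j = \Ctrace\, m_{\epsmu}(s) + \frac12$, which is exactly the stated bound.

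Equivalently, the combination step can be phrased abstractly: $\norm{\bA(s)}$ is bounded by the Euclidean operator norm of the matrix of block norms $\bigl(\begin{smallmatrix} C & j \\ j & C \end{smallmatrix}\bigr)$ acting on $\R^2$, whose largest eigenvalue is $C+j$. This is the clean reason the constant comes out sharp. The only point that genuinely requires care is precisely this sharpness: a crude triangle inequality that treats $\bA(s)$ as a sum of four embedded operators would produce $2\,\Ctrace\, m_{\epsmu}(s) + 1$, losing a factor in front of the dominant term. The $(a-b)^2\ge 0$ step (equivalently, the eigenvalue computation for the symmetric matrix with diagonal $C$ and off-diagonal $j$) is what recovers the sharp leading coefficient; all remaining manipulations are routine.
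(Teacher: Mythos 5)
Your proof is correct, and it fills in what the paper leaves implicit: the paper states this lemma without proof, treating it as an immediate consequence of Lemma~\ref{lemma:B-boundedness} (which bounds each Calder\'on block by $\Ctrace\, m\epsmu(s)$, since $m\epsmu(s)$ majorizes both the $+$ and $-$ fractions) together with the obvious norm $\norm{\bJ}_{\mXG^2\leftarrow\mXG^2}=\tfrac12$. Your block-row computation with the $2Cj(a-b)^2\ge 0$ comparison is valid and gives the stated constant. One remark on your closing claim about sharpness: the $(a-b)^2$ trick (or the eigenvalue computation for $\bigl(\begin{smallmatrix} C & j \\ j & C\end{smallmatrix}\bigr)$) is not actually needed to avoid the loss you describe. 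Splitting $\bA(s)$ as the sum of its block-diagonal part $\mathrm{diag}\bigl(\Cald_{\varepsilon^+,\mu^+}(s),\Cald_{\varepsilon^-,\mu^-}(s)\bigr)$ and its off-diagonal part $\bigl(\begin{smallmatrix} 0 & \bJ \\ -\bJ & 0\end{smallmatrix}\bigr)$, one checks directly that the diagonal part has norm at most $C=\Ctrace\,m\epsmu(s)$ (it acts componentwise, so its norm is the maximum of the block norms) and the off-diagonal part has norm exactly $\tfrac12$ (it is $\tfrac12$ times an isometry of $\mXG^4$), whence the plain triangle inequality on operators already yields $\norm{\bA(s)}\le C+\tfrac12$. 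The factor-of-two loss you warn about arises only if one splits into four separately embedded blocks; the two-term diagonal/off-diagonal splitting is the cleanest route and is presumably the argument the authors had in mind.
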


Moreover, we have the following coercivity result for the integral operator corresponding to the boundary integral equation.
\begin{lemma}
	\label{lem:A-coercivity}
	The operator family $\bA(s)$ satisfies the following coercivity property:
	for  $\Re s >0$ we have the bound
	\begin{align*}
\Re \left[
\begin{pmatrix}
\bphip \\ \bphim
\end{pmatrix}
,\bA(s)
\begin{pmatrix}
\bphip \\ \bphim
\end{pmatrix}
\right]_\Gamma
\ge&\
c_\Gamma^{-2}
m_{\epsmu}(s)^{-1}\left(\bigl\|{\bphip}\bigr\|^2_{\mXG^2} + \bigl\|\bphim\bigr\|^2_{\mXG^2}\right),
\end{align*}
%
	for all $ (\bphip , \bphim) \in \mXG^2\times \mXG^2$, where $\ctrace=\| \jmp{\gaT} \|_{\mXG \leftarrow \H(\curl,\mathbb{R}^3\setminus\Gamma)}$  is the norm
of the tangential jump operator. 
\end{lemma}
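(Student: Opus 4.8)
The plan is to exploit the block structure of $\bA(s)$ together with the coercivity of the individual Calder\'on operators established in Lemma~\ref{lem:B-coercivity}. First I would compute the skew-hermitian pairing of $(\bphip,\bphim)$ with $\bA(s)(\bphip,\bphim)^\top$ explicitly. Writing out the block action, the diagonal blocks contribute $[\bphip,\Cald_{\varepsilon^+,\mu^+}(s)\bphip]_\Gamma$ and $[\bphim,\Cald_{\varepsilon^-,\mu^-}(s)\bphim]_\Gamma$, while the off-diagonal blocks $\bJ$ and $-\bJ$ contribute the cross terms $[\bphip,\bJ\bphim]_\Gamma$ and $[\bphim,-\bJ\bphip]_\Gamma$.

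The key observation, which I expect to be the crucial algebraic step, is that the off-diagonal contributions cancel after taking the real part. Since $\bJ = \tfrac12\left(\begin{smallmatrix} & -\bId \\ \bId & \end{smallmatrix}\right)$ is built from the skew-hermitian pairing structure, and $[\cdot,\cdot]_\Gamma$ is itself skew-hermitian, one finds that $[\bphip,\bJ\bphim]_\Gamma + [\bphim,-\bJ\bphip]_\Gamma$ has vanishing real part (the two terms are complex conjugates of each other up to sign, so their sum is purely imaginary). Concretely, $[\bphip,\bJ\bphim]_\Gamma = \overline{[\bJ\bphim,\bphip]_\Gamma}$ would follow from the hermitian-type symmetry, and the skew-symmetry $[\bseta,\bvar]_\Gamma = -\overline{[\bvar,\bseta]_\Gamma}$ of the pairing makes the antisymmetric coupling via $\bJ$ drop out under $\Re$. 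Thus
\begin{equation*}
\Re\left[\begin{pmatrix}\bphip\\\bphim\end{pmatrix},\bA(s)\begin{pmatrix}\bphip\\\bphim\end{pmatrix}\right]_\Gamma
= \Re\left[\bphip,\Cald_{\varepsilon^+,\mu^+}(s)\bphip\right]_\Gamma
+ \Re\left[\bphim,\Cald_{\varepsilon^-,\mu^-}(s)\bphim\right]_\Gamma.
\end{equation*}

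Once the cross terms are eliminated, I would apply Lemma~\ref{lem:B-coercivity} separately to each diagonal term. That lemma bounds $\Re[\bphi^\pm,\Cald_{\varepsilon^\pm,\mu^\pm}(s)\bphi^\pm]_\Gamma$ from below by $c_\Gamma^{-2}$ times the minimum over the permittivity and permeability factors associated to $\Omega^\pm$, multiplied by $\|\bphi^\pm\|_{\mXG^2}^2$. Since $m_{\epsmu}(s)$ is defined as the maximum over all four $\pm$ factors, its reciprocal $m_{\epsmu}(s)^{-1}$ is a common lower bound for each of the individual minimum factors appearing in the two applications of Lemma~\ref{lem:B-coercivity}. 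Bounding both diagonal contributions below by $c_\Gamma^{-2}\,m_{\epsmu}(s)^{-1}\|\bphi^\pm\|_{\mXG^2}^2$ and summing yields exactly the stated estimate.

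The main obstacle is verifying rigorously that the off-diagonal $\bJ$-coupling makes no contribution after taking the real part; this requires careful bookkeeping of the anti-linearity conventions built into $[\cdot,\cdot]_\Gamma$ (recall $\a\cdot\b=\overline{\a}^\top\b$) and confirming that the two cross terms are genuine conjugates with opposite sign. The remaining steps — reducing the reciprocal-maximum to a common lower bound for both minima and invoking Lemma~\ref{lem:B-coercivity} — are routine once the cancellation is secured.
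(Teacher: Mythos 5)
Your proposal is correct and follows essentially the same route as the paper: split $\bA(s)$ into its block-diagonal Calder\'on part and the off-diagonal $\bJ$-coupling, show the latter drops out under $\Re$ (the paper verifies exactly your claimed cancellation by checking $\Re[\widetilde\bphi,\bJ\bphi]_\Gamma = \Re[\bphi,\bJ\widetilde\bphi]_\Gamma$ from the skew-hermitian pairing, so the $+\bJ$ and $-\bJ$ contributions cancel), and then apply Lemma~\ref{lem:B-coercivity} to each diagonal block, bounding both minima below by $m_{\epsmu}(s)^{-1}$ since that is the reciprocal of the maximum over all four material factors.
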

\begin{proof}
 We split the operator in the pairing
\begin{align*}
	\Re \left[
	\begin{pmatrix}
	\bphip \\ \bphim
	\end{pmatrix}
	,\bA(s)
	\begin{pmatrix}
	\bphip \\ \bphim
	\end{pmatrix}
	\right]_\Gamma
	&=
	\Re \left[
	\begin{pmatrix}
	\bphip \\ \bphim
	\end{pmatrix}
	,\begin{pmatrix}
	\Cald_{\varepsilon^+,\mu^+}(s)  & \\
	 &
	\Cald_{\varepsilon^-,\mu^-}(s) 
	\end{pmatrix}
	\begin{pmatrix}
	\bphip \\ \bphim
	\end{pmatrix}
	\right]_\Gamma	
\\ & +
	\Re \left[
	\begin{pmatrix}
	\bphip \\ \bphim
	\end{pmatrix}
	,\begin{pmatrix}
  & \bJ \\
	-\bJ  &
	\end{pmatrix}
	\begin{pmatrix}
	\bphip \\ \bphim
	\end{pmatrix}
	\right]_\Gamma,
\end{align*}
where the first summand is bounded from below by the coercivity of the Calderón operators given in Lemma~\ref{lem:B-coercivity}. The second summand vanishes due to symmetry of $\bJ$, which we verify next. We have
\begin{align*}
	\begin{split}
		2\,\Re \left[\widetilde \bphi\,,\, \bJ \bphi\right]_\Gamma	
		&=
		\Re \left[ \bphi_1\,,\,-\widetilde \bphi_2\right]_\Gamma
		+\Re\left[\bphi_2\,,\, \widetilde \bphi_1\right]_\Gamma =
		2\Re \left[\bphi\,,\, \bJ \widetilde \bphi\right]_\Gamma,
	\end{split}
\end{align*}
%
%
and therefore,
\begin{align*}
	\Re \left[
	\begin{pmatrix}
	\bphip \\ \bphim
	\end{pmatrix}
	,\begin{pmatrix}
	& \bJ \\
	-\bJ  &
	\end{pmatrix}
	\begin{pmatrix}
	\bphip \\ \bphim
	\end{pmatrix}
	\right]_\Gamma 
	&=
	\Re \left[  \bphip,\bJ\bphim\right]_\Gamma -	\Re \left[  \bphim,\bJ\bphip\right]_\Gamma =0,
\end{align*}
such that the claim follows.
\end{proof}

In view of this coercivity, we obtain the following well-posedness result.
\begin{proposition} [Well-posedness of the time-harmonic boundary integral equation]\label{prop:bie}
Consider the boundary integral equation~\eqref{eq:bie-A} for $\Re s > 0$. The boundary integral equation has a unique solution 
$$
\begin{pmatrix}\bwphip\\ \bwphim\end{pmatrix}= \bA(s)^{-1} 	\begin{pmatrix}-\bwginc\\ \bwginc\end{pmatrix} \in\mXG^4,
$$ 
which satisfies
\begin{equation} \label{phi-psi-bound-th}
\norm{
	\begin{pmatrix}\bwphip\\ \bwphim\end{pmatrix}
}_{\mXG^4}
\le 
\ctrace^2 \,
m_{\epsmu}(s)\,\sqrt2\,
\norm{\bwginc}_{\mXG^2}.
\end{equation}
The constant  $\ctrace$ is again the norm of the tangential jump operator, and $m_{\epsmu}(s)$ is defined in \eqref{eq:max-term-in-bounds}.
\end{proposition}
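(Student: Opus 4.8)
The plan is to obtain this well-posedness result as a direct application of the Lax--Milgram lemma to the weak formulation \eqref{eq:bie-A-weak}, using the two preceding lemmas as the two hypotheses. First I would observe that the sesquilinear form
\[
a(s)\big((\bupsilon,\bxi),(\bphip,\bphim)\big) = \left[\begin{pmatrix}\bupsilon\\ \bxi\end{pmatrix},\bA(s)\begin{pmatrix}\bphip\\ \bphim\end{pmatrix}\right]_\Gamma
\]
on $\mXG^4\times\mXG^4$ is bounded: by Lemma~\ref{lemma:A-boundedness} the operator $\bA(s)$ satisfies $\norm{\bA(s)}_{\mXG^4\leftarrow\mXG^4}\le \Ctrace\, m_{\epsmu}(s)+\tfrac12$, and since the pairing $[\cdot,\cdot]_\Gamma$ is continuous on $\mXG^2\times\mXG^2$ (and hence on $\mXG^4\times\mXG^4$ by the natural extension), the form is continuous. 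Second, Lemma~\ref{lem:A-coercivity} supplies coercivity of the same form, with coercivity constant $\ctrace^{-2}m_{\epsmu}(s)^{-1}$. Strictly speaking, since $[\cdot,\cdot]_\Gamma$ is only skew-hermitian rather than an inner product, I would invoke the complex/generalized Lax--Milgram (Babu\v{s}ka--Lax--Milgram) argument: boundedness plus the coercivity estimate $\Re\, a(s)\big((\bphi,\bphi)\big)\ge \ctrace^{-2}m_{\epsmu}(s)^{-1}\norm{\bphi}^2_{\mXG^4}$ yield a bounded, boundedly invertible operator, so for every right-hand side there is a unique solution.

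Next I would extract the stability bound \eqref{phi-psi-bound-th}. Setting $(\bupsilon,\bxi)=(\bphip,\bphim)$ in \eqref{eq:bie-A-weak} and taking real parts gives, on the one hand, the coercivity lower bound $\ctrace^{-2}m_{\epsmu}(s)^{-1}\norm{(\bphip,\bphim)}^2_{\mXG^4}$, and on the other hand the right-hand side $\Re\big[(\bphip,\bphim),(-\bwginc,\bwginc)\big]_\Gamma$. The latter I would estimate by the Cauchy--Schwarz inequality for the self-dual pairing on $\mXG$ (as used in the proof of Lemma~\ref{lem:transmission}) together with the Euclidean Cauchy--Schwarz inequality on the block structure, obtaining the factor $\sqrt2$: indeed
\[
\Big|\Big[\begin{pmatrix}\bphip\\ \bphim\end{pmatrix},\begin{pmatrix}-\bwginc\\ \bwginc\end{pmatrix}\Big]_\Gamma\Big|
\le \norm{\begin{pmatrix}\bphip\\ \bphim\end{pmatrix}}_{\mXG^4}\,\sqrt2\,\norm{\bwginc}_{\mXG^2}.
\]
Combining the two estimates and dividing by $\norm{(\bphip,\bphim)}_{\mXG^4}$ yields exactly \eqref{phi-psi-bound-th}.

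The only genuine subtlety, and thus the main obstacle, is the justification of Lax--Milgram for a skew-hermitian (non-coercive in the classical Hermitian sense) pairing: one must be careful that coercivity is phrased as a lower bound on the real part of the diagonal form, which is precisely what Lemma~\ref{lem:A-coercivity} provides, so the Babu\v{s}ka--Ne\v{c}as framework applies verbatim once boundedness (Lemma~\ref{lemma:A-boundedness}) and the coercivity estimate are in hand. A minor bookkeeping point is tracking the constant $\sqrt2$ cleanly through the block Cauchy--Schwarz step, but this is routine. I expect the entire proof to be short, essentially a citation of the two lemmas followed by the test-function computation; no new analytical ingredient beyond those already established is required.
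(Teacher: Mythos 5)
Your proposal is correct and takes essentially the same route as the paper, whose entire proof is a one-line appeal to the Lax--Milgram lemma with the coercivity of Lemma~\ref{lem:A-coercivity} (with boundedness supplied by Lemma~\ref{lemma:A-boundedness}); your explicit computation---testing with the solution, using the Cauchy--Schwarz inequality for the self-dual pairing on $\mXG$ and the block estimate producing the factor $\sqrt2$---is exactly the standard argument the paper leaves implicit, and it correctly reproduces the constant $\ctrace^2\, m_{\epsmu}(s)\sqrt2$ in \eqref{phi-psi-bound-th}. Your caution about the skew-hermitian pairing is sound but needs no extra machinery: the complex Lax--Milgram lemma only requires boundedness of the sesquilinear form together with the real-part coercivity estimate $\Re\,a(s)(\bphi,\bphi)\ge \ctrace^{-2}m_{\epsmu}(s)^{-1}\norm{\bphi}^2_{\mXG^4}$, which is precisely what Lemma~\ref{lem:A-coercivity} provides.
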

\begin{proof}
	The statement follows directly from the Lax--Milgram lemma with the coercivity of Lemma~\ref{lem:A-coercivity}.
\end{proof}
%

Using the above properties, we prove the following result, where the domain $\Om$ stands for either $\Om^+$ or $\Om^-$.

\begin{proposition}[Well-posedness of the time-harmonic scattering problem]\label{prop:wp-time-harm-scattering}
	For $\Re s >0$ there exists a unique solution$(\widehat{\E}, \widehat{\B}) \in \H(\curl, \Omega)\times \H(\curl, \Omega)$ to the time-harmonic transmission problem \eqref{TH-MW1-transmission}--\eqref{eq:transmission-conditions} given by the representation formulas \eqref{eq:time-harmonic-kirchhoff-E3}--\eqref{eq:time-harmonic-kirchhoff-H3}. The tangential traces are given by the unique solution of the boundary integral equation \eqref{eq:bie-A} via
	\begin{align*}
	\bwphip = \begin{pmatrix}
	\gamma_T^+\widehat{\H}^+ \\ -\gamma_T^+\widehat{\E}^+
	\end{pmatrix},
	\quad \quad
	\bwphim =\begin{pmatrix}
	-\gamma_T^-\widehat{\H}^- \\ \gamma_T^-\widehat{\E}^-
	\end{pmatrix} .
	\end{align*} 
The scattered electromagnetic fields are bounded by 
	\begin{align}\label{eq:electromag-fields-bound-improved}
\left\|\begin{pmatrix} \widehat{\E} \\ \widehat{\B} \end{pmatrix} \right\|_{\H(\curl,\R^3\setminus \Gamma)^2}
\le  \Ctrace	
\left(m_{\epsmu}(s)\right)^{3/2}
\left\|\bwginc  \right\|_{\mXG^2}.
\end{align}	

\end{proposition}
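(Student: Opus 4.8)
The plan is to prove the proposition in three stages: \emph{existence}, by constructing the fields from the unique boundary integral solution through the representation formulas; \emph{uniqueness and equivalence}, by reversing the derivation leading to \eqref{eq:bie-A} so that every transmission solution corresponds to a boundary integral solution; and the \emph{quantitative bound}, obtained by chaining the density estimate of Proposition~\ref{prop:bie} with the improved field estimate of Lemma~\ref{lem:transmission-improved}.

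For existence I would start from the unique solution $(\bwphip,\bwphim)\in\mXG^4$ of \eqref{eq:bie-A} furnished by Proposition~\ref{prop:bie}, and define the exterior fields in $\Omega^+$ by the representation formulas \eqref{eq:time-harmonic-kirchhoff-E3}--\eqref{eq:time-harmonic-kirchhoff-H3} with density $\bwphip$ and parameters $\varepsilon^+,\mu^+$, and the interior fields in $\Omega^-$ with density $\bwphim$ and parameters $\varepsilon^-,\mu^-$. By Lemma~\ref{lem:transmission} these lie in $\H(\curl,\Omega^\pm)$ and solve \eqref{TH-MW1-transmission}. As in the derivation of \eqref{eq:bie-A}, each reconstruction is the Maxwell potential of its density, so it reproduces the field in its own domain and vanishes in the complementary one; hence its trace from the far side is zero, the jump relations \eqref{jump-rel} identify the densities with the physical one-sided traces, and applying $\avg{\gaT}$ reproduces $\Cald_{\varepsilon^\pm,\mu^\pm}(s)$ acting on the densities via \eqref{eq:calderon-jump}. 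Reading the two block rows of \eqref{eq:bie-A} through the structure of $\bA(s)$ in \eqref{eq:def-A} and the coupling $\bJ$ then returns precisely the interface conditions \eqref{eq:transmission-conditions}, so the constructed pair solves the transmission problem.

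For uniqueness I would reverse the derivation preceding \eqref{eq:bie-A}: for $\Re s>0$ the fundamental solution decays exponentially, so any $(\widehat{\E},\widehat{\B})\in\H(\curl,\Omega^\pm)$ solving \eqref{TH-MW1-transmission}--\eqref{eq:transmission-conditions} is represented by its tangential traces through \eqref{eq:time-harmonic-kirchhoff-E3}--\eqref{eq:time-harmonic-kirchhoff-H3}, and the collected traces satisfy \eqref{eq:bie-A}. By the uniqueness part of Proposition~\ref{prop:bie} the densities, and therefore the fields, are uniquely determined; taking $\bwginc=0$ forces zero densities and hence the zero field.

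Finally, for the bound I would exploit that each reconstructed field, extended by zero across $\Gamma$, has a vanishing far-side trace, placing it in the situation of Lemma~\ref{lem:transmission-improved}; applying that lemma over $\Omega^+$ and over $\Omega^-$ contributes a factor $(m_{\epsmu}(s))^{1/2}/\sqrt2$ times the respective density norm, and summing the squared estimates gives
\[ \norm{\begin{pmatrix}\widehat{\E}\\ \widehat{\B}\end{pmatrix}}_{\H(\curl,\R^3\setminus\Gamma)^2} \le \tfrac{1}{\sqrt2}\,\bigl(m_{\epsmu}(s)\bigr)^{1/2}\, \norm{\begin{pmatrix}\bwphip\\ \bwphim\end{pmatrix}}_{\mXG^4}. \]
Inserting the density estimate $\norm{(\bwphip,\bwphim)}_{\mXG^4}\le \ctrace^2\,m_{\epsmu}(s)\,\sqrt2\,\norm{\bwginc}_{\mXG^2}$ of Proposition~\ref{prop:bie} produces the factor $(m_{\epsmu}(s))^{3/2}$ together with a constant depending only on the trace operators, which is the claimed estimate \eqref{eq:electromag-fields-bound-improved}. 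I expect the main obstacle to be the existence stage: rigorously confirming that the fields reconstructed from the abstract boundary integral solution satisfy the interface conditions \eqref{eq:transmission-conditions} and are genuinely supported on one side of $\Gamma$ (so that Lemma~\ref{lem:transmission-improved} with its $m_{\epsmu}^{1/2}$ rather than $m_{\epsmu}$ factor applies), which requires careful bookkeeping of the one-sided traces, the jump relations, and the off-diagonal coupling $\bJ$; the norm estimate is then a routine concatenation of the two earlier lemmas.
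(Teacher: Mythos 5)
Your overall route is the paper's route (BIE solution $\to$ representation formulas $\to$ transmission conditions from the block rows $\to$ chain Proposition~\ref{prop:bie} with Lemma~\ref{lem:transmission-improved} to get the $(m_{\epsmu}(s))^{3/2}$ factor), and your final norm computation is exactly right. But there is a genuine gap at the step you yourself flag: you assert that each reconstructed field ``reproduces the field in its own domain and vanishes in the complementary one,'' and you treat this as following from the jump relations plus bookkeeping. It does not. The representation formulas with an \emph{arbitrary} density $\bwphip\in\mXG^2$ produce fields on all of $\R^3\setminus\Gamma$ that are in general nonzero on \emph{both} sides of $\Gamma$; the jump relations only identify the densities with the \emph{jumps} $\jmp{\gaT}\widehat{\H}^\pm$, $-\jmp{\gaT}\widehat{\E}^\pm$, not with one-sided traces. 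The vanishing on the complementary side is equivalent to the identification of densities with one-sided traces, so invoking ``as in the derivation of \eqref{eq:bie-A}'' is circular: that derivation \emph{starts} from a transmission solution extended by zero, whereas here you start from an abstract solution of \eqref{eq:bie-A} and must \emph{prove} the one-sided support. Without it, Lemma~\ref{lem:transmission-improved} is not applicable, only Lemma~\ref{lem:transmission} is, and the bound degrades from $(m_{\epsmu}(s))^{3/2}$ to $(m_{\epsmu}(s))^{2}$.

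The missing argument, as the paper carries it out, is an energy identity, not bookkeeping. Writing out the two block rows via \eqref{eq:calderon-jump}, \emph{subtraction} gives the transmission conditions \eqref{eq:transmission-conditions} (this part of your sketch is fine), while \emph{summation} gives the extra information $\gamma_T^-\widehat{\E}^+ + \gamma_T^+\widehat{\E}^- = 0$ and $\gamma_T^-\widehat{\H}^+ + \gamma_T^+\widehat{\H}^- = 0$. Testing these relations in the skew-hermitian pairing $[\cdot,\cdot]_\Gamma$ with $\gamma_T^-\widehat{\H}^+$ and $\gamma_T^+\widehat{\E}^-$, the cross terms cancel by skew-symmetry, and Green's formula \eqref{Green-EH} converts the remaining boundary terms into volume integrals of $\bar{\varepsilon}\bar{s}\,|\widehat{\E}|^2 + \mu s\,|\widehat{\H}|^2$ over the \emph{complementary} domains; the passivity condition \eqref{passive} makes the real parts of these integrals nonnegative, forcing $(\widehat{\E}^+,\widehat{\H}^+)|_{\Omega^-}$ and $(\widehat{\E}^-,\widehat{\H}^-)|_{\Omega^+}$ to vanish. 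Only after this step are the densities one-sided traces and is Lemma~\ref{lem:transmission-improved} legitimately available, after which your concatenation with \eqref{phi-psi-bound-th} delivers \eqref{eq:electromag-fields-bound-improved}. Your uniqueness sketch (reversing the derivation via a Stratton--Chu-type representation and the uniqueness part of Proposition~\ref{prop:bie}) is acceptable in spirit, but the existence stage as proposed would fail without the summation-plus-energy argument above.
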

\begin{proof}
	Let $(\bwvarphi^+, \bwpsi^+, \bwvarphi^-, \bwpsi^-)$ be the solution of the time-harmonic boundary integral equation. We insert the boundary densities into the representation formulas and obtain electromagnetic fields $(\wE^{+},\wE^{-},\wH^{+},\wH^{-})$, each defined on $\mathbb R^3\setminus \Gamma$, such that
	\begin{equation*}
	\bwphip = 	\begin{pmatrix}
	\bwvarphi^+\\ \bwpsi^+ 
	\end{pmatrix}
	=
	\begin{pmatrix}
	\jmp{\gamma_T}\widehat{\H}^+\\ -\jmp{\gamma_T}\widehat{\E}^+
	\end{pmatrix}
	\quad\text{and}\quad 
	\bwphim = 
	\begin{pmatrix}
	\bwvarphi^-\\\bwpsi^-
	\end{pmatrix}
	=
	\begin{pmatrix} \jmp{\gamma_T}\widehat{\H}^-\\  -\jmp{\gamma_T}\widehat{\E}^-
	\end{pmatrix}.
	\end{equation*}
	The first two components of the left-hand side of boundary integral equation read
	\begin{align}\label{eq:component-1}
	-\bwginc &= 	\Cald_{\varepsilon^+,\mu^+}(s)	\bwphip +\bJ \bwphim
	= 
	\begin{pmatrix}
	\avg{\gamma_T}\widehat{\E}^+ \\ \avg{\gamma_T}\widehat{\H}^+
	\end{pmatrix}
	+ \frac12 
	\begin{pmatrix}
	\jmp{\gamma_T}\widehat{\E}^- \\ \jmp{\gamma_T}\widehat{\H}^-
	\end{pmatrix}
	\end{align}
	and
	\begin{align}\label{eq:component-2}
	\bwginc &= -\bJ \bwphip	+\Cald_{\varepsilon^-,\mu^-}(s)	\bwphim 
	=\frac12 \begin{pmatrix}
	-\jmp{\gamma_T}\widehat{\E}^+\\ -\jmp{\gamma_T}\widehat{\H}^+
	\end{pmatrix}
	+
	\begin{pmatrix}
	\avg{\gamma_T}\widehat{\E}^- \\ \avg{\gamma_T}\widehat{\H}^-
	\end{pmatrix}	.
	\end{align}
	Subtraction of these components yields precisely the transmission conditions, namely
	\begin{align*}
	\begin{pmatrix} \gamma^+_T\widehat \E^+\inc \\ \gamma^+_T\widehat \H^+\inc \end{pmatrix}
	= 
	\begin{pmatrix}
	-\gamma_T^+\widehat{\E}^+ 	+\gamma_T^-\widehat{\E}^-
	\\
	-\gamma_T^+\widehat{\H}^+ 	+\gamma_T^-\widehat{\H}^-
	\end{pmatrix}.
	\end{align*}
	The fields $(\widehat{\E}^+,\widehat{\H}^+ )|_{\Omega^+}$ and $(\widehat{\E}^-,\widehat{\H}^- )|_{\Omega^-}$ therefore uniquely solve the transmission problem of interest.
	
	Summation of the components \eqref{eq:component-1}--\eqref{eq:component-2} yields conversely
	\begin{align*}
	\begin{pmatrix} 		\gamma_T^-\widehat{\E}^+
	+ \gamma_T^+\widehat{\E}^-
	\\
	\gamma_T^-\widehat{\H}^+
	+ \gamma_T^+\widehat{\H}^-
	\end{pmatrix} = 0 .
	\end{align*} 
	In the following, we test these equations via the anti-symmetric pairing and specific test functions. Inserting the test function $\gamma_T^- \widehat{\H}^+$ in the first component and $\gamma_T^+ \widehat \E^-$ in the second component yields
	\begin{align*}
	0 &= \Re \left[\gamma_T^- \widehat \H^+, \gamma_T^-\widehat{\E}^+
	+ \gamma_T^+\widehat{\E}^-\right]_\Gamma
	+
	\Re\left[\gamma_T^+ \widehat \E^-, \gamma_T^-\widehat{\H}^+
	+ \gamma_T^+\widehat{\H}^-\right]_\Gamma
	\\ &=
	\Re\left[\gamma_T^- \widehat \H^+, \gamma_T^-\widehat{\E}^+\right]_\Gamma
	-
	\Re\left[\gamma_T^+\widehat{\H}^-, \gamma_T^+ \widehat \E^-\right]_\Gamma.
	\end{align*} 
	As the direct consequence of \eqref{Green-EH}, we observe that $(\widehat{\E}^+,\widehat{\H}^+ )|_{\Omega^-}$ and $(\widehat{\E}^-,\widehat{\H}^- )|_{\Omega^+}$ vanish.	
	
To obtain the bound \eqref{eq:electromag-fields-bound-improved}, observe that we are now in the situation of Lemma~\ref{lem:transmission-improved}, and the claim follows together with the bounds given in Proposition~\ref{prop:bie}.
\end{proof}

\begin{remark}
In view of the $L^2$-bound of Lemma~\ref{lem:transmission-improved}, we further obtain an improved $L^2$-bound for the solution of the time-harmonic scattering problem. Under the strong passivity condition \eqref{strongly-passive} we have the bound
		\begin{align*}
			\left\|\begin{pmatrix} \widehat{\E} \\ \widehat{\B} \end{pmatrix} \right\|_{L^2(\Omega^\pm)^2} 
			\le \, C_{\Gamma,\sigma}\frac{\abs{s}^2 }{(\Re s)^{3/2}} \left\| \bwginc  \right\|_{\mXG^2}
			\qquad\text{for }\ \Re s \ge \sigma >0.
		\end{align*}
\end{remark}


\section{The time-dependent scattering problem}
\label{sec:td-scattering}

\subsection{The time-dependent boundary integral equation}
Throughout this section we assume strong passivity condition \eqref{strongly-passive}.
The time-dependent version of the boundary integral equation \eqref{eq:bie-A} is obtained by formally replacing the Laplace transform variable $s$ by the time differentiation operator~$\pt$: Given $\bginc:[0,T]\to\mXG^2$, find time-dependent boundary densities $(\bphip ,\bphim):[0,T]\to \mXG^2\times \mXG^2$ (of temporal regularity to be specified later) such that for almost every $t\in [0,T]$ we have 
\begin{align}\label{eq:bie-A-t}
\begin{pmatrix}
 \Cald_{\varepsilon^+,\mu^+}(\pt)  & \bJ \\
-\bJ  &
   \Cald_{\varepsilon^-,\mu^-}(\pt) 
\end{pmatrix}
\begin{pmatrix}
\bphip \\ \bphim
\end{pmatrix}
=
\begin{pmatrix}
-\bginc \\ \bginc
\end{pmatrix}.
\end{align}
We abbreviate this as
\begin{equation}\label{bie-A-t}
\bA(\pt) \bphi = \bg \quad\text{ with } \quad  \bphi (t) = \begin{pmatrix}
\bphip(t) \\ \bphim(t)
\end{pmatrix} \in \mXG^4, \ \  \bg (t)=  \begin{pmatrix}
-\bginc(t) \\ \bginc(t)
\end{pmatrix} \in \mXG^4.
\end{equation}
In view of the bound of Proposition~\ref{prop:bie} on the operator family $\bA(s)^{-1}$ for $\Re s>0$, the temporal convolution operator   
$$
\bA^{-1}(\pt)\bg=\mathcal{L}^{-1}\bA^{-1} * \bg
$$ 
is well-defined, and by the composition rule we have $\bA^{-1}(\pt) \bA(\pt) = \bId$ and
$\bA(\pt) \bA^{-1}(\pt) = \bId$. So we have the temporal convolution
\begin{align}\label{eq:time-Ainv}
\bphi
=\bA^{-1}(\pt) \bg
\end{align}
as the unique solution of \eqref{bie-A-t}. More precisely, with the argument given above and the convolution bound of \cite[Lemma 2.1]{L94}, we obtain the following result. Here $H^r_0(0,T; \mXG^4)$ is the space of functions on the interval $(0,T)$ taking values in $\mXG^4$ that have an extension to the real line that is in the Sobolev space $H^r(\R,\mXG^4)$.

\begin{theorem} [Well-posedness of the time-dependent boundary integral equation]
\label{prop:bie-t} Let $r\ge 0$. For $\bg\in H^{r+3}_0(0,T;\mXG^4)$,
the boundary integral equation~\eqref{bie-A-t}
has a unique solution $\bphi \in H^{r+1}_0(0,T; \mXG^4)$, and
\begin{equation} \label{phi-psi-bound}
\norm{
	\bphi
}_{H^{r+1}_0(0,T;\mXG^4)}
\le 
 C_T\,\norm{\bg}_{H^{r+3}_0(0,T;\mXG^4)}.
\end{equation}
Here, $C_T$ depends  on $T$ 
and on the boundary $\Gamma$ via norms of tangential trace operators.
\end{theorem}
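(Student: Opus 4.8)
The plan is to read the result off the operational calculus, exploiting the representation $\bphi=\bA^{-1}(\pt)\bg$ from \eqref{eq:time-Ainv}. Concretely, the only tool I would invoke is the convolution estimate \eqref{sobolev-bound} (from \cite[Lemma~2.1]{L94}): once $s\mapsto\bA(s)^{-1}$ is seen to be an analytic family of operators on $\mXG^4$ obeying a polynomial bound of the form \eqref{eq:pol_bound} with some exponent $\kappa$, that estimate turns $\bA^{-1}(\pt)$ into a bounded convolution operator $H^{r+\kappa}_0(0,T;\mXG^4)\to H^{r}_0(0,T;\mXG^4)$. Everything then reduces to identifying the symbol bound with the correct exponent $\kappa=2$ and to matching up the regularity indices.

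First I would bound the symbol $\bA(s)^{-1}$ as an operator on all of $\mXG^4$. The coercivity of Lemma~\ref{lem:A-coercivity} holds for every element of $\mXG^4$, so together with the boundedness of Lemma~\ref{lemma:A-boundedness} the Lax--Milgram lemma (exactly as in Proposition~\ref{prop:bie}) makes $\bA(s)$ boundedly invertible for $\Re s>0$, with $\norm{\bA(s)^{-1}}_{\mXG^4\leftarrow\mXG^4}$ bounded by the reciprocal of the coercivity constant, i.e.\ by $\ctrace^{2}$ times the quantity in \eqref{eq:max-term-in-bounds}. Since the strong passivity condition is assumed throughout this section, the estimate \eqref{mepsmu-bound} upgrades this to
\[
\norm{\bA(s)^{-1}}_{\mXG^4\leftarrow\mXG^4}\le \ctrace^{2} C_\sigma\,\frac{\abs{s}^2}{\Re s}\le \frac{\ctrace^{2} C_\sigma}{\sigma}\,\abs{s}^2 ,\qquad \Re s\ge\sigma>0,
\]
which is precisely \eqref{eq:pol_bound} with $\kappa=2$ and $M_\sigma=\ctrace^{2}C_\sigma/\sigma$. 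Analyticity of $s\mapsto\bA(s)^{-1}$ on $\Re s>0$ follows from the analyticity of $\bA(s)$ (assembled from the analytic potential and boundary operators) together with its bounded invertibility, via the local Neumann series.

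With $\kappa=2$ in hand I would apply \eqref{sobolev-bound} with $X=Y=\mXG^4$ and output regularity $r+1$ (hence input regularity $r+3$), giving the bounded map $\bA^{-1}(\pt)\colon H^{r+3}_0(0,T;\mXG^4)\to H^{r+1}_0(0,T;\mXG^4)$ with norm at most $e\,M_{1/T}$. Thus $\bphi=\bA^{-1}(\pt)\bg$ lies in $H^{r+1}_0(0,T;\mXG^4)$ and satisfies \eqref{phi-psi-bound} with $C_T=e\,M_{1/T}$, a constant depending only on $T$ and, through $\ctrace$, on the boundary. Uniqueness is then immediate from the composition rule \eqref{comp-rule}: since $\bA(\pt)\bA^{-1}(\pt)=\bId=\bA^{-1}(\pt)\bA(\pt)$, this $\bphi$ solves \eqref{bie-A-t} and is the only solution in the space under consideration.

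I expect no genuine obstacle here, because the conceptual work has already been carried out in the time-harmonic analysis; the argument is essentially bookkeeping. The one place that demands care is the passage from the $s$-dependent coercivity constant to a clean polynomial symbol bound: that constant degenerates like $\Re s/\abs{s}^2$ as $\Re s\downarrow 0$ or $\abs{s}\to\infty$, and it is exactly the strong-passivity estimate \eqref{mepsmu-bound} that caps the growth of $\bA(s)^{-1}$ at $\abs{s}^2$, fixing $\kappa=2$ and thereby the loss of precisely two derivatives from $H^{r+3}_0$ to $H^{r+1}_0$. A minor additional remark is that the block structure $\bg=(-\bginc,\bginc)^\top$ is irrelevant to the operator-norm bound, so the stated data require no separate treatment.
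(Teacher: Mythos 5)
Your proposal is correct and takes essentially the same route as the paper: the paper's proof likewise combines the Lax--Milgram inversion bound on $\bA(s)^{-1}$ from Proposition~\ref{prop:bie}, sharpened via the strong-passivity estimate \eqref{mepsmu-bound} to the polynomial symbol bound $\abs{s}^2/\Re s$ (so $\kappa=2$, accounting for the loss of two derivatives from $H^{r+3}_0$ to $H^{r+1}_0$), with the convolution bound \eqref{sobolev-bound} of \cite[Lemma~2.1]{L94}, and obtains existence and uniqueness from the composition rule $\bA^{-1}(\pt)\bA(\pt)=\bId=\bA(\pt)\bA^{-1}(\pt)$. The only additions beyond the paper's argument are routine details it leaves implicit (the Neumann-series remark on analyticity of $s\mapsto\bA(s)^{-1}$ and the harmless $\sqrt{2}$ from the block structure of $\bg$).
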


\subsection{Well-posedness of the time-dependent scattering problem}

With the time-dependent boundary densities $\bphi = (\bphi^+,\bphi^-)^T$ of Theorem~\ref{prop:bie-t}, the scattered wave is obtained by the time-dependent representation formula, compactly denoted by the exterior and interior block operators $\W^{\pm}(\pt)$ via
\begin{align}
	\begin{pmatrix}
	\E^{\pm} \\ \H^{\pm}
	\end{pmatrix}&=
	\W^{\pm}(\pt) \bphi^{\pm}= 
	\begin{pmatrix}
-\sqrt{\frac{\mu^{\pm}}{\eps^{\pm}}}(\pt)\, \mathbfcal{S}^{\pm}\epsmu(\pt)\bvarphi^{\pm} + \mathbfcal{D}^{\pm}\epsmu(\pt)\bpsi^{\pm} \\ 
- \mathbfcal{D}^{\pm}\epsmu(\pt)\bvarphi^{\pm} - \sqrt{\frac{\eps^{\pm}}{\mu^{\pm}}}(\pt)\, \mathbfcal{S}^{\pm}\epsmu(\pt)\,\bpsi^{\pm}
	\end{pmatrix},\label{eq:time-dep-kirchhoff-EH3}
\end{align}
where we used \eqref{eq:time-harmonic-kirchhoff-E3} and \eqref{eq:time-harmonic-kirchhoff-H3}.

We now give the well-posedness result for the time-dependent scattering problem,
which follows from the time-harmonic well-posedness result Proposition~\ref{prop:wp-time-harm-scattering}.

\begin{theorem}[Well-posedness of the time-dependent scattering problem]
\label{th:time-dependent-well-posedness} \ Consider the time-dependent scattering problem \eqref{eq:td-scattering-domain-2} equipped with \eqref{eq:td-transmission-conditions} 
and $\bginc \in H^{r+3}_0(0,T;\mXG^2)$ for some arbitrary $r\ge 0$.

	(a) This problem has a unique solution
	$$({\E^{\pm}},{\B^{\pm}}) \in H^r_0(0,T;\H(\curl,\Omega^{\pm})^2) \cap H^{r+1}_0(0,T;(\bL^2(\Omega^{\pm}))^2)
	$$
given by the
	representation  formulas \eqref{eq:time-dep-kirchhoff-EH3}. The tangential traces
 are uniquely determined by the  solution of the system of boundary integral equations of Theorem~\ref{prop:bie-t},
	$$(\bvar^{\pm},\bpsi^{\pm})=(\gaT{\H^{\pm}},-\gaT {\E^{\pm}})\in H^{r+1}_0(0,T;\mXG \times \mXG).$$

	(b) The electromagnetic fields are bounded by
		\begin{align*}
		\| {\E^{\pm}} \|_{H^r_0(0,T;\H(\curl,\Omega^{\pm}))} + \| {\B^{\pm}} \|_{H^r_0(0,T;\H(\curl,\Omega^{\pm}))}
	\le 
	C_T \|\bginc\|_{H^{r+3}_0(0,T;\mXG^2)},
	\end{align*}
	and the same bound is valid for the $H^{r+1}_0(0,T;(\bL^2(\Omega^{\pm}))^2)$ norms.
 Here, $C_T$ depends polynomially on $T$, on the boundary $\Gamma$ via norms of tangential trace operators, and on the bounds of the frequency dependent material parameters $\eps,\mu$.
\end{theorem}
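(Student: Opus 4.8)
The plan is to reduce both assertions to the time-harmonic statements already established, transporting them to the time domain through Lubich's operational calculus; the only genuine work is to check that the solution operator is analytic and polynomially bounded in $s$, with the correct powers of $|s|$. First I would solve the time-dependent boundary integral equation: applying Theorem~\ref{prop:bie-t} to the data $\bg=(-\bginc,\bginc)^\top\in H^{r+3}_0(0,T;\mXG^4)$ produces a unique $\bphi=(\bphip,\bphim)\in H^{r+1}_0(0,T;\mXG^4)$ obeying \eqref{phi-psi-bound}. I would then declare the tangential traces to be the components $(\bvar^{\pm},\bpsi^{\pm})$ of $\bphi^{\pm}$ and define the electromagnetic fields by inserting $\bphi^{\pm}$ into the time-dependent representation formulas \eqref{eq:time-dep-kirchhoff-EH3}.

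For the estimates in part (b), I would regard the composed map $\bginc\mapsto(\E^{\pm},\B^{\pm})$ as a single analytic operator-valued symbol $K(s)$ on $\Re s>0$ and bound it via \eqref{eq:pol_bound}. In the $\H(\curl,\R^3\setminus\Gamma)^2$-norm, Proposition~\ref{prop:wp-time-harm-scattering} gives $\|K(s)\|\le\Ctrace\,(m_{\epsmu}(s))^{3/2}$, and the strong-passivity bound \eqref{mepsmu-bound} turns this into $\|K(s)\|\le C_\sigma|s|^3$ for $\Re s\ge\sigma$, i.e.\ a polynomial bound with $\kappa=3$; the operational-calculus estimate \eqref{sobolev-bound} then furnishes a bounded operator $H^{r+3}_0(0,T;\mXG^2)\to H^r_0(0,T;\H(\curl)^2)$ with a $T$-dependent constant, which is the first claimed bound. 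For the $\bL^2$-norm I would instead combine the density bound of Proposition~\ref{prop:bie} (of order $m_{\epsmu}(s)\sim|s|^2/\Re s$) with the sharper $\bL^2$-estimate of Lemma~\ref{lem:transmission-improved} (of order $(\Re s)^{-1/2}$ under \eqref{strongly-passive}); the product grows only like $|s|^2/(\Re s)^{3/2}\le C_\sigma|s|^2$, so here $\kappa=2$ and \eqref{sobolev-bound} yields the map $H^{r+3}_0\to H^{r+1}_0(0,T;\bL^2)$. These two mapping properties also certify the solution spaces asserted in part (a).

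To show that the constructed fields actually solve the scattering problem and that the traces are uniquely determined, I would transport the time-harmonic equivalence of Proposition~\ref{prop:wp-time-harm-scattering} to the time domain by the composition rule \eqref{comp-rule}. For each fixed $s$ with $\Re s>0$ the frequency-domain fields satisfy \eqref{eq:td-scattering-domain-2} and the transmission conditions; since every symbol occurring, namely the potential operators $\mathbfcal{S}_{\varepsilon,\mu}(s)$ and $\mathbfcal{D}_{\varepsilon,\mu}(s)$, the scalar factors $\sqrt{\mu^{\pm}(s)/\varepsilon^{\pm}(s)}$ and $\sqrt{\varepsilon^{\pm}(s)/\mu^{\pm}(s)}$, and the multipliers $\varepsilon^{\pm}(s)s$, $\mu^{\pm}(s)s$, is analytic and polynomially bounded on account of \eqref{strongly-passive}--\eqref{eq:bound-eps-mu}, these pointwise-in-$s$ identities pass to identities between temporal convolution operators and hold almost everywhere in $t$. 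Uniqueness then follows from the injectivity of the Laplace transform together with the time-harmonic uniqueness, so the scattering problem inherits a unique solution coinciding with the one built above.

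The step requiring the most care is the regularity bookkeeping: I must verify that the $\bL^2$-symbol grows exactly one power of $|s|$ more slowly than the $\H(\curl)$-symbol, since this discrepancy is precisely what lets the $\bL^2$-component of the solution gain one temporal derivative, and I must check that all material-parameter factors $\sqrt{\mu^{\pm}(s)/\varepsilon^{\pm}(s)}$ and their reciprocals stay polynomially bounded under \eqref{strongly-passive}--\eqref{eq:bound-eps-mu} so that \eqref{sobolev-bound} is genuinely applicable. By contrast, the equivalence argument is essentially a transcription of the proof of Proposition~\ref{prop:wp-time-harm-scattering} and should pose no real difficulty.
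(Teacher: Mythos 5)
Your proposal is correct and follows essentially the same route as the paper, whose proof simply cites the identical argument from \cite[Thm.~4.2]{NKL22}: solve the boundary integral equation via Theorem~\ref{prop:bie-t}, define the fields through the representation formulas, bound the composed solution operator $\U^{\pm}_{\epsmu}(s)$ in the Laplace domain (your exponents $\kappa=3$ for $\H(\curl)$, matching \eqref{eq:U-Hcurl-bound}, and $\kappa=2$ for $\bL^2$, matching the remark after Proposition~\ref{prop:wp-time-harm-scattering}, are exactly right), and transfer everything to the time domain via \eqref{sobolev-bound} and the composition rule, with uniqueness from the injectivity of the Laplace transform. Your regularity bookkeeping, including the check that the factors $\sqrt{\mu^{\pm}(s)/\varepsilon^{\pm}(s)}$ remain polynomially bounded via \eqref{eps-mu-factor-bounds}, is precisely what makes the argument go through.
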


\begin{proof}
	The proof is identical to \cite[Thm.~4.2]{NKL22}.
\end{proof}


\section{Semi-discretization in time by Runge--Kutta convolution quadrature}
\label{section:time semi-discrete}

%
%
\subsection{Recap: Runge--Kutta convolution quadrature}

To approximate the omnipresent temporal convolutions $K(\partial_t)g$, we will employ the convolution quadrature method based on Runge--Kutta time stepping schemes. In order to introduce the notation, we recall an $m$-stage implicit Runge--Kutta  discretization of the initial value problem $y' = f(t,y)$, $y(0) = y_0$;  see \cite{HairerWannerII}. For some constant time step $\tau > 0$, the approximations $y^n$ to $y(t_n)$ at time $t_n = n \tau$, and the internal stages $Y^{ni}$ approximating $y(t_n + c_i \tau)$, are computed by solving the system
\begin{equation*}
\begin{aligned}
Y^{ni} &= y^n + \tau \sum_{j = 1}^m a_{ij} f(t_n+c_jh,Y^{nj}), \qquad i =
1,\dotsc,m,\\
y^{n+1} & = y^n + \tau \sum_{j = 1}^m b_j f(t_n+c_jh,Y^{nj}) .
\end{aligned}
\end{equation*}
The method is uniquely defined by the Butcher-tableau, which collects its coefficients
\begin{equation*}
\mathscr{A} = (a_{ij})_{i,j = 1}^m , \quad \cqb = (b_1,\dotsc,b_m)^T,
\quad \text{and} \quad \cqc = (c_1,\dotsc,c_m)^T .
\end{equation*}
The stability function of the Runge--Kutta method is given by $R(z) = 1 + z b^T (\cqI - z \mathscr{A})^{-1} \bone$, where $\bone = (1,1,\dotsc,1)^T \in \R^m$. We always assume that $\mathscr{A}$ is invertible.

Runge--Kutta methods can be used to construct convolution quadrature methods.
Such methods were first introduced in \cite{LubichOstermann_RKcq} in the context of parabolic problems and were studied for wave propagation problems in \cite{BLM11} and subsequently, e.g., in \cite{BanjaiKachanovska,BanjaiLubich2019,BanjaiMessnerSchanz,BanjaiRieder}. Runge--Kutta convolution quadrature was studied  for the numerical solution of some exterior Maxwell problems in \cite{BBSV13,ChenMonkWangWeile,NKL22} and of an eddy current problem with an impedance boundary condition in \cite{HiptmairLopezFernandezPaganini}. For wave problems, Runge--Kutta convolution quadrature methods such as those based on the Radau IIA methods, see \cite[Section~IV.5]{HairerWannerII}, often enjoy more favourable properties than their BDF-based counterparts, which are more dissipative and cannot exceed order~2 but are easier to understand and slightly easier to implement.

Let $\cqK(s):\cqX\to \cqY$, $\Re s \ge \sigma_0>0$,  be an analytic family of linear operators between Banach spaces $\cqX$ and $\cqY$, satisfying
the bound, for some exponents $\kappa\in\R$ and $\nu\ge 0$,
\begin{equation}\label{KLM-bound}
\| \cqK(s) \|_{\cqY\leftarrow \cqX} \le M_\sigma \frac{|s|^\kappa}{(\Re s)^\nu}, \qquad \Re s \ge \sigma> \sigma_0.
\end{equation}
This yields a convolution operator $\cqK(\pt):H^{r+\kappa}_0(0,T;\cqX) \to \cqH^{r}_0(0,T;\cqY)$  for arbitrary real $r$. For functions $\cqg:[0,T]\to \cqX$ that are sufficiently regular (together with their extension by 0 to the negative real half-axis $t<0$),
we wish to approximate the convolution $(\cqK(\pt)\cqg)(t)$ at discrete times $t_n=n\tau$ with a step size $\tau>0$, using a discrete convolution.

To construct the convolution quadrature weights, we use the \emph{Runge--Kutta differentiation symbol}
\begin{equation}
\label{eq:Delta}
\Delta(\zeta) = \Bigl(\mathscr{A}+\frac\zeta{1-\zeta}\bone \cqb^T\Bigr)^{-1} \in \C^{m \times m}, \qquad
\zeta\in\C \hbox{ with } |\zeta|<1.
\end{equation}
This is well-defined for $|\zeta|<1$ if 
$R(\infty)=1-b^T\mathscr{A}^{-1}\bone$ satisfies $|R(\infty)|\le 1$, as is seen from the Sherman--Woodbury formula. Moreover, for A-stable Runge--Kutta methods (e.g. the Radau IIA methods), the eigenvalues of the matrices $\Delta(\zeta)$ have positive real part for $|\zeta|<1$~\cite[Lemma 3]{BLM11}.

To formulate the Runge--Kutta convolution quadrature for $\cqK(\partial_t )\cqg$, we replace the complex argument $s$ in $\cqK(s)$ by the matrix $\Delta(\zeta)/\tau$ and expand
\begin{equation}\label{rkcq-weights}
\cqK\Bigl(\frac{\Delta(\zeta)}\tau \Bigr) = \sum_{n=0}^\infty {\boldsymb W}_n(\cqK) \zeta^n.
\end{equation}
The operators ${\cqW}_n(\cqK):\cqX^m \to \cqY^m$ are used as the convolution quadrature ``weights''.
For the discrete convolution of these operators with a sequence $\cqg=(\cqg^n)$ with $\cqg^n=(\cqg^n_i)_{i=1}^m\in \cqX^m$
we use the notation
\begin{equation}\label{rkcq}
\bigl(\cqK(\pttau) \boldsymb g \bigr)^n = \sum_{j=0}^n {\boldsymb W}_{n-j}(\cqK) \boldsymb g^j \in \cqY^m.
\end{equation}
Given a function $\boldsymb g:[0,T]\to \cqX$, we use this notation for the vectors $\cqg^n = \bigl(\cqg(t_n+c_i\tau)\bigr)_{i=1}^m$ of values of $\cqg$.
The $i$-th component of the vector $\bigl(\cqK(\pttau) \boldsymb g \bigr)^n$ is then an approximation to $\bigl(\cqK(\partial_t)\cqg\bigr)(t_n+c_i\tau)$; see \cite[Theorem 4.2]{BanjaiLubich2019}.

In particular, if $c_m = 1$, as is the case with Radau IIA methods,
the continuous convolution at $t_{n}$ is approximated by the $m$-th, i.e.~last component of the $m$-vector \eqref{rkcq} for $n-1$:
\begin{equation*}
\bigl(\cqK(\partial_t) \cqg \bigr)(t_{n}) \approx   \Bigl[\bigl(\cqK( \pttau) \boldsymb g \bigr)^{n-1}\Bigr]_m \in \cqY. 
\end{equation*}
This discretization \eqref{rkcq} inherits the composition rule \eqref{comp-rule}: For two analytic families of operators $\cqK(s)$ and $\cqL(s)$ mapping into compatible spaces, the convolution quadrature discretization satisfies 
\begin{equation}\label{comp-rule-cq}
\cqK(\pttau)\cqL(\pttau) \boldsymb g = ( \cqK\cqL)(\pttau)  \boldsymb g;
\end{equation}
see e.g. \cite[Equation (3.5)]{L94}.

%

The following error bound for Runge--Kutta convolution quadrature from~\cite{BLM11}, here directly stated for the Radau IIA methods \cite[Section~IV.5]{HairerWannerII} and transferred to a Banach space setting, will be the basis for our error bounds of the  time discretization.

\begin{lemma}[{\cite[Theorem~3]{BLM11}}]
	\label{lem:RK-CQ}
	Let $\cqK(s):\cqX\to \cqY$, $\Re s > \sigma_0\ge0$,  be an analytic family of linear operators between Banach spaces $\cqX$ and $\cqY$ satisfying
the bound \eqref{KLM-bound} with exponents $\kappa$ and $\nu$.
	Consider the Runge--Kutta convolution quadrature based on the Radau IIA method with $m$ stages. Let $1\le q\le m$ (the most interesting case is $q=m$) and $r>\max(2q-1+\kappa,2q-1,q+1)$. Let $\cqg \in \cqC^r([0,T],\cqX)$ satisfy $\cqg(0)=\cqg'(0)=...=\cqg^{(r-1)}(0)=0$. Then, the following error bound holds at $t_n=n\tau\in[0,T]$:
	\begin{align*}
	&\norm{{ \Bigl[\bigl(\cqK( \pttau) \boldsymb g \bigr)^{n-1}\Bigr]_m-(\cqK(\pt)\cqg)(t_{n})} }_{\cqY}
	\\
	&\quad\quad\quad \le
	C\, M_{1/T}\,\tau^{\min(2q-1,q+1-\kappa+\nu)}
	\left(\|{\cqg^{(r)}(0)}\|_{\cqX}+\int_0^t\|{\cqg^{(r+1)}(t')}\|_{\cqX} \,\mathrm{d}t'
	\right).
	\end{align*}
	The constant C is independent of $\tau$ and $\cqg$ and $M_\sigma$ of \eqref{KLM-bound}, but depends on the exponents $\kappa$ and $\nu$ in \eqref{KLM-bound} and on the final time $T$.
\end{lemma}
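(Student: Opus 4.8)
The plan is to reduce the operator-valued statement to a scalar analysis in the Laplace domain and then exploit the classical and stage order behaviour of the Radau~IIA method. Writing $k=\mathcal{L}^{-1}\cqK$, the continuous convolution has the operational-calculus representation
\begin{equation*}
(\cqK(\pt)\cqg)(t_n)=\frac{1}{2\pi i}\int_{\sigma-i\infty}^{\sigma+i\infty}e^{st_n}\cqK(s)\,\widehat{\cqg}(s)\,\mathrm ds,\qquad \sigma=1/T,
\end{equation*}
whereas the discrete convolution is governed by the generating-function identity $\sum_n(\cqK(\pttau)\cqg)^n\zeta^n=\cqK(\Delta(\zeta)/\tau)\,\widetilde{\cqg}(\zeta)$, with $\widetilde{\cqg}$ the $\zeta$-transform of the stage data. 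The first step is therefore to write both objects as contour integrals and to express the error $[(\cqK(\pttau)\cqg)^{n-1}]_m-(\cqK(\pt)\cqg)(t_n)$ as a single integral in which $\cqK$ is evaluated once at the continuous argument $s$ and once at the discrete symbol $\Delta(\zeta)/\tau$ with $\zeta=e^{-s\tau}$.

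Next, the vector (internal-stage) structure is handled by the holomorphic functional calculus: since the eigenvalues of $\Delta(\zeta)$ lie in the right half-plane for $|\zeta|<1$ by A-stability \cite[Lemma 3]{BLM11}, the operator $\cqK(\Delta(\zeta)/\tau)$ is defined by a Dunford integral and reduces to the scalar symbols $\cqK(\lambda)$ with $\Re\lambda>0$, along which the resolvents $(\cqI-z\mathscr{A})^{-1}$ entering $\Delta$ are uniformly bounded. The key scalar ingredient, which goes back to the convergence theory of implicit Runge--Kutta methods in \cite{HairerWannerII}, quantifies the defect between $\cqK(s)$ and $\cqK(\Delta(e^{-s\tau})/\tau)$: for bounded $|s\tau|$ it is $O((s\tau)^{2q-1})$ by the classical order $2q-1$ of the scheme, while for large $|s\tau|$ the stage order $q$ together with $R(\infty)=0$ (L-stability of Radau~IIA) yields a defect of size $O((s\tau)^{q+1})$.

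I would then insert the symbol bound \eqref{KLM-bound}, $\|\cqK(s)\|\le M_\sigma|s|^\kappa(\Re s)^{-\nu}$, into the contour integral and split the $s$-integration at $|s|\sim\tau^{-1}$. On the low-frequency part the classical order dominates and contributes $\tau^{2q-1}$; on the high-frequency part the stage-order defect $O((s\tau)^{q+1})$ competes with the growth $|s|^\kappa$ and is tempered by the decay $(\Re s)^{-\nu}$, producing the rate $\tau^{q+1-\kappa+\nu}$. Taking the smaller of the two yields the exponent $\min(2q-1,\,q+1-\kappa+\nu)$. The data-dependent factor arises by integration by parts in the Laplace representation: each power of $s$ absorbed into $\widehat{\cqg}(s)$ becomes a derivative of $\cqg$, and the hypotheses $\cqg(0)=\dots=\cqg^{(r-1)}(0)=0$ make all boundary terms vanish up to order $r$, leaving exactly $\|\cqg^{(r)}(0)\|_{\cqX}+\int_0^{t}\|\cqg^{(r+1)}(t')\|_{\cqX}\,\mathrm dt'$; the condition $r>\max(2q-1+\kappa,2q-1,q+1)$ is precisely what renders the resulting frequency integrals absolutely convergent.

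The main obstacle is the sharp balancing in the high-frequency regime: one must track, uniformly in $s$ along the contour, how the stage-order-limited Runge--Kutta defect interacts with the algebraic growth $|s|^\kappa$ and the compensating decay $(\Re s)^{-\nu}$, since this interplay alone decides whether the classical order $2q-1$ or the reduced rate $q+1-\kappa+\nu$ is realized. Making the resolvent and defect estimates uniform over the whole disc $|\zeta|<1$ (equivalently for all $\Re s\ge\sigma$), and not merely for small $|s\tau|$, is the technical heart of the argument.
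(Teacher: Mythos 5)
The paper contains no proof of this lemma to compare against: it is quoted (specialized to Radau IIA and transferred to a Banach-space setting) directly from \cite[Theorem~3]{BLM11}. Measured against the proof in that reference, your sketch reconstructs the correct overall architecture: the error representation as a contour integral along $\Re s=\sigma$, the reduction of the matrix argument $\Delta(\zeta)/\tau$ to scalar symbol values via the right-half-plane location of the eigenvalues of $\Delta(\zeta)$ for $|\zeta|<1$ (\cite[Lemma~3]{BLM11}), the splitting of the contour at $|s|\sim\tau^{-1}$ that balances $\tau^{2q-1}$ against $\tau^{q+1-\kappa+\nu}$, and the conversion of powers of $s$ into time derivatives of $\cqg$ by integration by parts, where the vanishing data at $t=0$ produce exactly the factor $\|\cqg^{(r)}(0)\|_{\cqX}+\int_0^t\|\cqg^{(r+1)}(t')\|_{\cqX}\,\mathrm dt'$ and the hypothesis on $r$ secures absolute convergence of the frequency integrals.

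Two steps of your sketch are too optimistic as stated, and both concern the same point. First, the low-frequency rate $\tau^{2q-1}$ does \emph{not} follow from a pointwise defect bound $\|\cqK(\Delta(e^{-s\tau})/\tau)-\cqK(s)\|=O((s\tau)^{2q-1})$: the symbol defect generically carries only the stage order, $O((s\tau)^{q+1})$. The classical order $2q-1$ is a superconvergence effect at the grid points $t_n$, which uses the stiff accuracy of Radau IIA ($\cqb^T$ equal to the last row of $\mathscr{A}$, $c_m=1$) and precisely the extraction of the last component $[\,\cdot\,]_m$ that appears in the statement; in \cite{BLM11} it is obtained from a careful analysis of the Runge--Kutta error for the parametrized family of ODEs $y'=sy+g$, $y(0)=0$, integrated against $\cqK(s)$ over the contour, not from a crude comparison of the two symbols. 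Second, in the high-frequency regime $|s\tau|\gtrsim 1$ one does not use a defect of size $O((s\tau)^{q+1})$ --- that quantity is large there --- but instead bounds the discrete and continuous contributions separately through \eqref{KLM-bound} together with uniform bounds on $\Delta(\zeta)$ and $\Delta(\zeta)^{-1}$; the exponent $q+1-\kappa+\nu$ then emerges from the balancing at the splitting point, with the factor $(\Re s)^{-\nu}$ supplying the gain $\tau^{\nu}$. With these two repairs your outline coincides with the argument of \cite{BLM11}, so the gap is one of mechanism (superconvergence at $t_n$ versus symbol defect) rather than of overall strategy.
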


%

\subsection{Convolution quadrature for the scattering problem}
Throughout the following sections, we assume strong passivity \eqref{strongly-passive} for the frequency dependent parameters $\eps(s), \mu(s)$. Applying a Runge--Kutta based convolution quadrature discretization to the temporal convolution equation \eqref{bie-A-t} reads
\begin{align}
		\label{eq:AinvoperatorCQ}
		\A(\pttau) 
		\bphi_\tau
		=
		\bg,
		\qquad\text{or equivalently},\qquad
		\bphi_\tau
		=
		\A^{-1}(\pttau)\, 
		\bg,
\end{align}
where $\bphi$ and $\bg$ are defined in \eqref{bie-A-t}, and the equivalence of the two formulations is a consequence of the discrete composition rule \eqref{comp-rule-cq}.
 This formulation, which is equivalent to discretizing the boundary integral equation \eqref{bie-A-t} with the convolution quadrature method and inverting the quadrature weights, interprets the solution of the discretized boundary integral equation as a forward convolution quadrature. The error of this formulation is then bounded by the error estimate of Lemma~\ref{lem:RK-CQ}, through the bound of $\bA^{-1}(s)$ given in Proposition~\ref{prop:bie}. This argument for the stability of the formulation and the resulting path to error estimates originates from~\cite{L94}, for a time-dependent boundary integral equation derived in the context of an acoustic problem.

The time discretizations of the electromagnetic fields are then obtained by applying the convolution quadrature to the representation formulas \eqref{eq:time-dep-kirchhoff-EH3}
with $\bphi^\pm_\tau = (\bvar^\pm_\tau, \bpsi^\pm_\tau)$:
\begin{align}
	\begin{pmatrix}
		\E_\tau^{\pm} \\ \H_\tau^{\pm}
	\end{pmatrix}&=
	\W_{\epsmu}^{\pm}(\pt^\tau) \bphi_\tau^{\pm}
	= 
	\begin{pmatrix}
		-\sqrt{\frac{\mu^{\pm}}{\eps^{\pm}}}(\pt^\tau)\, \mathbfcal{S}^{\pm}\epsmu(\pt^\tau)\bvarphi_\tau^{\pm} + \mathbfcal{D}^{\pm}\epsmu(\pt^\tau)\bpsi_\tau^{\pm} \\ 
		- \mathbfcal{D}^{\pm}\epsmu(\pt^\tau)\bvarphi_\tau^{\pm} - \sqrt{\frac{\eps^{\pm}}{\mu^{\pm}}}(\pt^\tau)\, \mathbfcal{S}^{\pm}\epsmu(\pt^\tau)\,\bpsi_\tau^{\pm}
	\end{pmatrix}.\label{eq:time-dep-kirchhoff-EH3-tau}
\end{align}

By the discrete composition rule \eqref{comp-rule-cq}, this is the convolution quadrature discretization of the composed operator
\begin{equation}\label{U-t}
\begin{pmatrix} \E^{\pm}_\tau \\ \B^{\pm}_\tau
\end{pmatrix}
= \U_{\epsmu}^{\pm}(\pttau) \bginc
\qquad \text{of}\qquad
\begin{pmatrix} \E^{\pm} \\  \B^{\pm}
\end{pmatrix}
= \U_{\epsmu}^{\pm}(\pt) \bginc,
\end{equation}
where we have by Theorem~\ref{th:time-dependent-well-posedness} that 
$	\U_{\epsmu}^{\pm}(s) :
	\mXG^2 \to \H(\curl,\Omega^{\pm})^2 $
is given by 
\begin{equation}\label{eq:auxiliary-operators}
\U_{\epsmu}^{\pm}(s) = \W_{\epsmu}^{\pm}(s)
 \mathbfcal P^{\pm}\bAinvs \mathbfcal N ,
\end{equation}
with the auxiliary maps $\mathbfcal P^{\pm} \colon \mXG^4\rightarrow \mXG^2$ projecting on the exterior and interior boundary densities respectively and $\mathbfcal N \colon \mXG^2\rightarrow \mXG^4$ expanding the terms of the incident wave via
\begin{align*}
\mathbfcal P^+ = 
\begin{pmatrix}
	\bId & 0 
\end{pmatrix}, \quad \mathbfcal P^-= \begin{pmatrix}0 & \bId \end{pmatrix} \quad\text{ and } \quad \mathbfcal N =    \begin{pmatrix} -\bId & \bId \end{pmatrix}^{\top} .
\end{align*}
Under the stronger passivity condition, we then have by Proposition~\ref{prop:wp-time-harm-scattering} the bound
\begin{equation}\label{eq:U-Hcurl-bound}
\| \U_{\epsmu}^{\pm}(s) \|_{\H(\curl,\Omega^{\pm})^2 \leftarrow \mXG^2} \le C_{\sigma} \dfrac{\abs{s}^3}{(\Re s)^{3/2}}, \quad\text{ for }\  \Re s \ge \sigma > 0.
\end{equation}

Moreover, away from the boundary on $\Omega^{\pm}_d=\{ \x\in \Omega^{\pm}\,:\,\dist(\x,\Gamma)> d\}$ with $d >0$, bounds that decay exponentially with the real part of $s$ hold. The following lemma is a direct consequence of \cite[Lemma~3.8]{NKL22} and of Lemma~\ref{lem:eps-mu-s} to obtain the following parameter-dependent bound.

	\begin{lemma}\label{lem:bound-away-boundary-deriv} 
		Under the strong passivity condition~\eqref{strongly-passive}, we have the following bounds at $\bx\in \R^3\setminus \Gamma$  with $d = \dist(\x,\Gamma) > 0$ and for $\Re s \ge \sigma >0$
	\begin{align*}
	\abs{\big(\mathbfcal{S}\epsmu(s)\bvarphi\big)(x)}  &\le C_\sigma\abs{s}^{2}\e^{-dc\Re s}\norm{\bvarphi}_\mXG,\\
	\abs{\big(\mathbfcal{D}\epsmu(s)\bvarphi\big)(x)} &\le C_\sigma\ \abs{s}^{2}\e^{-dc\Re s}\norm{\bvarphi}_\mXG.
	\end{align*}
	for all $\bvarphi \in \mXG$.
	\end{lemma}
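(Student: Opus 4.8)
The plan is to reduce everything to the cited pointwise bounds in \cite[Lemma~3.8]{NKL22}, which I assume give, for the \emph{scalar-argument} potential operators $\mathbfcal S(s)$ and $\mathbfcal D(s)$ (i.e.\ with $\varepsilon=\mu=1$), an estimate of the form
\[
\bigl|\bigl(\mathbfcal S(s)\bvarphi\bigr)(\x)\bigr|
\le C_\sigma\,|s|^{2}\,\e^{-d\,\Re s}\,\norm{\bvarphi}_\mXG,
\]
and similarly for $\mathbfcal D(s)$, valid at a point $\x$ with $\dist(\x,\Gamma)=d>0$ and $\Re s\ge\sigma>0$. The exponential factor here is the decay $\e^{-d\,\Re s}$ coming from the factor $\e^{-s|\x-\y|}$ in the fundamental solution $G(s,\cdot)$, together with $|\x-\y|\ge d$ on $\Gamma$. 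The whole point of the lemma is that $\mathbfcal S\epsmu(s)=\mathbfcal S\bigl(s\sqrt{\varepsilon(s)\mu(s)}\bigr)$ and $\mathbfcal D\epsmu(s)=\mathbfcal D\bigl(s\sqrt{\varepsilon(s)\mu(s)}\bigr)$ by definition~\eqref{SD-eps-mu}, so I only need to substitute the modified frequency $\tilde s\coloneqq s\sqrt{\varepsilon(s)\mu(s)}$ into the scalar bound and control the resulting quantities in terms of $s$.

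First I would establish the two ingredients governing $\tilde s$. For the exponential decay I need a \emph{lower} bound on $\Re\tilde s$: this is precisely the content of the strong passivity inequality \eqref{strong-pos-sqrt-eps-mu-s} of Lemma~\ref{lem:eps-mu-s}, namely $\Re\tilde s\ge c^{-1}\Re s$ with $c^{-1}=\sqrt{\varepsilon_0\mu_0}$. Hence $\e^{-d\,\Re\tilde s}\le \e^{-d\,c^{-1}\Re s}=\e^{-dc^{-1}\Re s}$, which, after renaming the constant in the exponent, gives the factor $\e^{-dc\,\Re s}$ appearing in the statement (the $c$ in the lemma being this fixed material constant, not necessarily $c^{-1}$; I would simply absorb the numeric factor). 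For the polynomial prefactor I need an \emph{upper} bound on $|\tilde s|$: from $|\tilde s|=|s|\,|\varepsilon(s)|^{1/2}|\mu(s)|^{1/2}$ and the growth bound \eqref{eq:bound-eps-mu}, which gives $|\varepsilon(s)|\le M_\sigma\varepsilon_0$ and $|\mu(s)|\le M_\sigma\mu_0$ for $\Re s\ge\sigma$, I obtain $|\tilde s|\le M_\sigma\sqrt{\varepsilon_0\mu_0}\,|s|\le C_\sigma|s|$.

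With these two facts in hand the proof is a direct substitution. Plugging $\tilde s$ into the scalar bound for $\mathbfcal S$ yields
\[
\bigl|\bigl(\mathbfcal S\epsmu(s)\bvarphi\bigr)(\x)\bigr|
=\bigl|\bigl(\mathbfcal S(\tilde s)\bvarphi\bigr)(\x)\bigr|
\le C_\sigma\,|\tilde s|^{2}\,\e^{-d\,\Re\tilde s}\,\norm{\bvarphi}_\mXG
\le C_\sigma\,|s|^{2}\,\e^{-dc\,\Re s}\,\norm{\bvarphi}_\mXG,
\]
after inserting $|\tilde s|\le C_\sigma|s|$ and $\Re\tilde s\ge c^{-1}\Re s$ and renaming constants; the identical argument with $\mathbfcal D$ in place of $\mathbfcal S$ gives the second estimate. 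One point I would check carefully is that the constant $M_\sigma$ (and hence $C_\sigma$) in \eqref{eq:bound-eps-mu} is uniform for $\Re s\ge\sigma$, so that the substitution $s\mapsto\tilde s$ does not silently move us outside the region where the scalar bound of \cite{NKL22} holds; since $\Re\tilde s\ge c^{-1}\sigma>0$ whenever $\Re s\ge\sigma$, the modified frequency stays in a half-plane of the required type, so the cited bound applies with a possibly adjusted constant. The main (and only mild) obstacle is therefore bookkeeping: matching the exponent constant $c$ and the $\sigma$-dependent constants between the scalar estimate and the dispersive one, rather than any genuine analytical difficulty, since all the substantive work is already done in Lemma~\ref{lem:eps-mu-s} and in the cited pointwise bounds.
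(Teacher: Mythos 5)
Your proposal is correct and is exactly the paper's intended argument: the paper gives no explicit proof, stating only that the lemma is a direct consequence of \cite[Lemma~3.8]{NKL22} and Lemma~\ref{lem:eps-mu-s}, and your substitution $\tilde s = s\sqrt{\varepsilon(s)\mu(s)}$ with the lower bound $\Re\tilde s\ge c^{-1}\Re s$ from \eqref{strong-pos-sqrt-eps-mu-s} and the upper bound $|\tilde s|\le C_\sigma|s|$ from \eqref{eq:bound-eps-mu} fills in precisely those details. Your side remarks --- that $\Re\tilde s\ge c^{-1}\sigma>0$ keeps $\tilde s$ in a half-plane where the cited bound applies, and that the constant in the exponent ($c$ versus $c^{-1}$) is a bookkeeping/notation matter in the paper's statement --- are both apt.
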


%

Combining Lemma~\ref{lem:bound-away-boundary-deriv} and Theorem~\ref{prop:bie} yields, under the assumption of strong passivity \eqref{strongly-passive} and using \eqref{eps-mu-factor-bounds},
\begin{align}\label{eq:U-pointwise-bound}
\| \U^{\pm}\epsmu(s) \|_{(\bC^1(\overline\Omega^{\pm}_d)^3)^2\leftarrow \mXG^2}
\le C_{\sigma}\frac{\abs{s}^{1/2}}{\left(\Re s\right)^{1/2}}\frac{\abs{s}^{4}}{\Re s}
e^{-d c\Re s}\,
\norm{\bwginc}_{\mXG^2},
\end{align}
for $\Re s\ge \sigma>0$. 
The $\bC^1(\overline\Omega^{\pm}_d)$-norm denotes the maximum norm on continuously differentiable functions and their derivatives on the closure of the domains $\Omega^{\pm}_d$ respectively. 

For the sake of brevity, we omit a formulation of error bounds for the temporal semi-discretization and continue with a full discretization for the boundary integral equation. 


\section{Full discretization}
\label{sec:full}

Finally, we combine a convolution quadrature time discretization of \eqref{bie-A-t} with a spatial Galerkin approximation of the boundary operators, based on a boundary element space 
 $\X_h \subset \mXG$, which corresponds to a family of triangulations with decreasing mesh width $h\rightarrow 0$. Throughout this paper, we use Raviart--Thomas boundary elements of order $k\ge 0$, which are defined on the unit triangle $\widehat{K}$ by
\begin{align*}
\text{RT}_k(\widehat{K}) = \left\{\x\mapsto \p_1(\x)+p_2(\x) \x \,:\  \p_1\in P_k(\widehat{K})^2\!,\  p_2\in P_k(\widehat{K})\right\},
\end{align*}
where $P_k(\widehat{K})$ contains all polynomials of degree $k$ on $\widehat{K}$. 
The definition is then extended to arbitrary triangles in the standard way via pull-back to the reference element. Details are found in the original paper \cite{RT77}.

The following approximation result holds with respect to the $\mXG$-norm; see also the original references \cite[Section~III.3.3]{BreF91} and \cite{BC03}.
Here we use the same notation $\H^p_\times(\Gamma)=\gaT \H^{p+1/2}(\Omega)$ as in \cite{BH03}.

\begin{lemma}[{\cite[Theorem 14]{BH03}}]
	\label{lem:RT}
Let $\X_h$ be the $k$-th order Raviart--Thomas boundary element space on $\Gamma$. There exists a constant $C$, such that the best-approximation error of any $\bxi\in\mXG\cap \H^{k+1}_\times(\Gamma)$
 is bounded by
\begin{align*}
&\inf_{\bxi_h\in \X_h} \| \bxi_h - \bxi \|_{\mXG} \le C h^{k+3/2} \|\bxi \|_{\H^{k+1}_\times(\Gamma)}.
\end{align*}
\end{lemma}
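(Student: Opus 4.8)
The plan is to exhibit a concrete interpolation operator onto $\X_h$ and to bound its error directly in the graph norm of the trace space $\mXG$, which simultaneously measures the field in $\H^{-1/2}(\Gamma)$ and its surface divergence in $H^{-1/2}(\Gamma)$. The natural choice is the Raviart--Thomas interpolation operator $\Pi_h\colon\H^{k+1}_\times(\Gamma)\to\X_h$, defined through the edge and interior moments of $\mathrm{RT}_k$, which are admissible for the assumed regularity. Its decisive structural property is the commuting relation
\[
\divG\,\Pi_h\bxi \;=\; Q_h\,\divG\bxi,
\]
where $Q_h$ denotes the $L^2(\Gamma)$-orthogonal projection onto the discontinuous piecewise-polynomial space of degree $k$ subordinate to $\X_h$. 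This identity ties the surface de Rham structure to the discrete spaces and reduces the control of the divergence part of the error to scalar approximation theory for $Q_h$.

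First I would establish the local $L^2$ estimate for the field. Pulling back to the reference triangle $\widehat K$, using the invariance $\Pi_h\bxi=\bxi$ on $\mathrm{RT}_k(\widehat K)$, the Bramble--Hilbert lemma together with an affine scaling argument gives, elementwise and then after summation over the triangulation,
\[
\|\bxi-\Pi_h\bxi\|_{\bL^2(\Gamma)}\le C\,h^{k+1}\,\|\bxi\|_{\H^{k+1}_\times(\Gamma)}.
\]
By the commuting relation the surface-divergence error equals $\divG\bxi-Q_h\,\divG\bxi$, so it is governed entirely by the scalar projection $Q_h$ applied to $\divG\bxi$.

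The essential step is the gain of half an order needed to pass from these $L^2$ estimates to the negative-order norms of $\mXG$. For the field I would use the duality characterization
\[
\|\bxi-\Pi_h\bxi\|_{\H^{-1/2}(\Gamma)}=\sup_{\bseta\neq 0}\frac{\langle \bxi-\Pi_h\bxi,\bseta\rangle}{\|\bseta\|_{\H^{1/2}(\Gamma)}},
\]
split $\langle \bxi-\Pi_h\bxi,\bseta\rangle=\langle \bxi-\Pi_h\bxi,\bseta-\bseta_h\rangle+\langle \bxi-\Pi_h\bxi,\bseta_h\rangle$ for a local polynomial approximant $\bseta_h$ of $\bseta$, bound the first term by the $\bL^2$-estimate above times $h^{1/2}\|\bseta\|_{\H^{1/2}(\Gamma)}$, and exploit the interior moment conditions defining $\Pi_h$ to render the second term of equal or higher order; this yields the field rate $h^{k+3/2}$. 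The analogous negative-norm estimate applied to $\divG\bxi-Q_h\,\divG\bxi$ bounds the $H^{-1/2}(\Gamma)$-norm of the divergence error at the same rate, using the regularity of $\divG\bxi$ encoded in the $\H^{k+1}_\times(\Gamma)$-norm of \cite{BH03}. Summing the two contributions of the $\mXG$-graph norm and using $\inf_{\bxi_h\in\X_h}\|\bxi-\bxi_h\|_{\mXG}\le\|\bxi-\Pi_h\bxi\|_{\mXG}$ produces the claim; I would cite \cite{RT77} for the element and \cite{BH03} for the precise framework.

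I expect the main obstacle to be the rigorous treatment of the fractional, negative-order trace norms on a merely Lipschitz surface $\Gamma$. There the spaces $\H^{\pm1/2}(\Gamma)$, $H^{\pm1/2}(\Gamma)$ and the operators $\divG,\curlG$ must be set up through the trace and duality construction of \cite{BCS02,BH03} rather than by local charts; the operator $\Pi_h$ has to be replaced by a regularized (Clément- or Schöberl-type) commuting quasi-interpolant that remains bounded on the low-regularity space $\mXG$ while preserving the commuting relation; and the half-order gain is most safely justified by real interpolation between integer-order estimates, since the elliptic regularity underlying a classical Aubin--Nitsche duality step is unavailable on Lipschitz domains.
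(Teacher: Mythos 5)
You should first note a mismatch of genre: the paper does not prove this lemma at all --- it is quoted verbatim as \cite[Theorem 14]{BH03}, so your proposal is a reconstruction of the cited result rather than an alternative to any paper-internal argument. On the merits, your route --- the Raviart--Thomas interpolant $\Pi_h$, the commuting property $\divG\Pi_h\bxi=Q_h\divG\bxi$, elementwise Bramble--Hilbert plus scaling for the $\bL^2$ rate $h^{k+1}$, and a duality step to gain the extra $h^{1/2}$ in the negative-order norms --- is exactly the standard machinery behind the cited theorem, so methodologically you are on target. One simplification you missed: since the lemma only claims the \emph{infimum} over $\X_h$, boundedness of $\Pi_h$ on the low-regularity space $\mXG$ is irrelevant ($\Pi_h$ is only ever applied to the smooth $\bxi$), so the Cl\'ement/Sch\"oberl-type regularized quasi-interpolant you invoke is not needed here; it matters for quasi-optimality of Galerkin projections, not for best approximation.

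Two concrete gaps remain. First, the duality step as written is empty in the lowest-order case $k=0$: $\mathrm{RT}_0$ has no interior moments, so there is no nontrivial piecewise-polynomial $\bseta_h$ with $\langle\bxi-\Pi_h\bxi,\bseta_h\rangle=0$, and even for $k\ge1$ the interior degrees of freedom only annihilate moments against $(P_{k-1})^2$, which does not by itself render your second pairing higher order. The robust fixes are either to abandon $\Pi_h$ in the negative-norm part in favour of an $L^2(\Gamma)$-orthogonal projection onto $\X_h$ (whose error \emph{is} orthogonal to the discrete space, so the splitting $\langle e,\bseta\rangle=\langle e,\bseta-\bseta_h\rangle$ closes), or to use interpolation of operators between integer-order estimates; your closing remark about real interpolation is the right instinct, but you offer it only as a remedy for Lipschitz geometry, whereas it is needed already for $k=0$ on a smooth surface. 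Second, the divergence bookkeeping: with the paper's convention $\H^{k+1}_\times(\Gamma)=\gaT\H^{k+3/2}(\Omega)$, the stated right-hand norm controls only about $H^{k}$-regularity of $\divG\bxi$, and then your commuting-diagram argument yields only
\begin{equation*}
\|\divG\bxi-Q_h\divG\bxi\|_{H^{-1/2}(\Gamma)}\le C\,h^{k+1/2}\,\|\divG\bxi\|_{H^{k}(\Gamma)},
\end{equation*}
half an order short of the claim. The rate $h^{k+3/2}$ for the divergence contribution requires $\divG\bxi\in H^{k+1}(\Gamma)$; in \cite{BH03} the hypothesis is accordingly formulated on the space $\H^{k+1}_\times(\divG,\Gamma)$, whose norm includes $\|\divG\bxi\|_{H^{k+1}(\Gamma)}$. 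Your proof silently assumes this stronger regularity (as, arguably, does the paper's abbreviated statement), and you should make it explicit rather than asserting that it is ``encoded'' in the trace norm.
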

\noindent The full discretization of boundary integral equation \eqref{bie-A-t} on $\X_h^4$ then reads
\begin{align}\label{bie-A-tau-h}
\left[
\bxi_h\, ,\,
\A(\pttau)\bphi_h^\tau \right]_\Gamma=\left[\ \bxi_h
\,,\,\bg\inc\right]_\Gamma \quad \quad \forall \,\bxi_h \in (\X_h^4)^m.
\end{align}
This formulation determines the approximate boundary densities, by
\begin{equation}
\left(\bphi_h^\tau\right)^n = \left((\bvarphi^{+}_{\tau,h})^n, (\bpsi^{+}_{\tau,h})^n,(\bvarphi^{-}_{\tau,h})^n, (\bpsi_{\tau,h}^{+})^n\right)^\top \in \X_h^4,
\end{equation}
where
 $\bvar^{\pm}_{\tau,h}=\bigl((\bvar^{\pm}_{\tau,h})^n\bigr)$
with $(\bvar^{\pm}_{\tau,h})^n$ $=\bigl( (\bvar^{\pm}_{\tau,h})^n_i \bigr)_{i=1}^m\in \X_h^m$ . The electric densities $\bpsi^{\pm}_{\tau,h}$ are defined in the same way. 
The approximations to the electromagnetic fields are obtained via the time-discrete representation formulas on the interior domain $\Omega^{-}$ and the exterior domain $\Omega^+$: 
\begin{align}
\label{eq:time-dependent-representation-E-tau-h}
\E^{\pm}_{\tau,h} & = - \sqrt{\frac{\mu}{\eps}}(\pttau)\, \mathbfcal{S}\epsmu(\pttau)\bvar^{\pm}_{\tau,h}+\mathbfcal{D}\epsmu(\pttau)\bpsi^{\pm}_{\tau,h}, \\
\label{eq:time-dependent-representation-H-tau-h}
\B^{\pm}_{\tau,h} & = - \mathbfcal{D}\epsmu(\pttau) \bvar^{\pm}_{\tau,h} - \sqrt{\frac{\eps}{\mu}}(\pttau)\,\mathbfcal{S}\epsmu(\pttau) \bpsi^{\pm}_{\tau,h} .
\end{align}
These fully discrete approximations satisfy the following error bounds, obtained under regularity assumptions that are presumably stronger than necessary.

\begin{theorem} [Error bound of the  full discretization] \label{th:error-bounds}
Consider the setting and assumptions of Theorem~\ref{th:time-dependent-well-posedness} and further let $\varepsilon^{\pm}$ and $\mu^{\pm}$ satisfy the strong passivity~\eqref{strongly-passive}.

Consider the fully discrete scheme \eqref{bie-A-tau-h} and the temporally discrete representation formulas  \eqref{eq:time-dependent-representation-E-tau-h}--\eqref{eq:time-dependent-representation-H-tau-h}, where the $m-$stage Radau IIA convolution quadrature discretization and $k$-th order Raviart--Thomas boundary element discretization have been employed as described in the previous sections.
\\
For $r>2m+3$ we assume the incoming waves to satisfy $\bginc \in \bC^r([0,T],\mXG^4)$. Moreover, we assume $\bginc$ to vanish at $t=0$ together with its first $r-1$ time derivatives. Furthermore, it is
assumed that the solution $\bphi$ of the boundary integral equation~\eqref{bie-A-t} is at least in
$\bC^{10}([0,T],\H^{k+1}_\times(\Gamma)^2)$, vanishing at $t=0$ together with its time derivatives.
\\
Then, the approximations to the electromagnetic fields at time $t_n$, both in the interior and the exterior domain,
$$\left( \E^{\pm}_{\tau,h} \right)^n=\bigl[(\E^{\pm}_{\tau,h} )^{n-1}\bigr]_m \quad \text{and} \quad \left( \H^{\pm}_{\tau,h} \right)^n = \bigl[(\B^{\pm}_{\tau,h})^{n-1}\bigr]_m\, ,
$$ 
satisfy the following error bound of order $m-1/2$ in time and order $k+3/2$ in space at $t_n=n\tau\in[0,T]$:
\begin{align*}
	\norm{\left(\E^{\pm}_{\tau,h} \right)^n- \E(t_n) }_{\H(\curl,\Omega^{\pm})} +
	\norm{\left(\H^{\pm}_{\tau,h} \right)^n- \H(t_n) }_{\H(\curl,\Omega^{\pm})}
\le C \bigl(\tau^{m-1/2}+h^{k+3/2}\bigr).
\end{align*}
	For $r>2m+4$, we obtain the full order $2m-1$ in time away from the interface $\Gamma$, on the domains $\Omega^{\pm}_d=\{ \x\in \Omega\,:\,\dist(\x,\Gamma)> d\}$ with $d >0$, which reads 
\begin{align*}
\norm{\left(\E^{\pm}_{\tau,h} \right)^n- \E(t_n) }_{\bC^1(\overline\Omega^{\pm}_d)^3} +
\norm{\left(\H^{\pm}_{\tau,h} \right)^n- \H(t_n) }_{\bC^1(\overline\Omega^{\pm}_d)^3}
\le C_d  \bigl(\tau^{2m-1}+h^{k+3/2}\bigr).
\end{align*}
The constants $C$ and $C_d$ are independent of $n$, $\tau$ and $h$, but depend on the final time~$T$ and on the regularity of $\bginc$ and $(\bvar,\bpsi)$ as stated. $C_d$ additionally depends on the distance $d$. 
\end{theorem}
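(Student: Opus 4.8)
The plan is to write the fully discrete field as the Runge--Kutta convolution quadrature of a \emph{spatially discrete} solution operator and to compare it with the continuous operator $\U^{\pm}(s)$ of \eqref{eq:auxiliary-operators}. Let $\U_h^{\pm}(s)=\W^{\pm}(s)\,\mathbfcal P^{\pm}\bA_h^{-1}(s)\,\mathbfcal N$ denote the analogue of \eqref{eq:auxiliary-operators} in which $\bA^{-1}(s)$ is replaced by the inverse $\bA_h^{-1}(s)$ of the Galerkin form on $\X_h^4$; then the fully discrete field is $\U_h^{\pm}(\pttau)\bginc$ and I would split
\[
\U_h^{\pm}(\pttau)\bginc-\U^{\pm}(\pt)\bginc
= \underbrace{\bigl(\U_h^{\pm}(\pttau)-\U^{\pm}(\pttau)\bigr)\bginc}_{\text{space error}}
+\underbrace{\bigl(\U^{\pm}(\pttau)-\U^{\pm}(\pt)\bigr)\bginc}_{\text{time error}}.
\]
Each summand is estimated by transferring an $s$-explicit bound from the Laplace domain to the time domain.

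For the time error I would apply Lemma~\ref{lem:RK-CQ} with $q=m$ directly to $\cqK=\U^{\pm}$ and $\cqg=\bginc$. In the $\H(\curl,\Omega^{\pm})$ norm the bound \eqref{eq:U-Hcurl-bound} gives $\kappa=3$ and $\nu=\tfrac32$; the lemma then requires $r>2m+2$, which the hypothesis $r>2m+3$ provides, and yields the rate $\min(2m-1,\,m+1-\kappa+\nu)=\min(2m-1,\,m-\tfrac12)=m-\tfrac12$. Away from $\Gamma$ the pointwise bound \eqref{eq:U-pointwise-bound} reads $C_\sigma|s|^{9/2}(\Re s)^{-3/2}\e^{-dc\,\Re s}$, i.e.\ $\kappa=\tfrac92$; since $(\Re s)^{\nu}\e^{-dc\,\Re s}$ stays bounded on $\Re s\ge\sigma$ for \emph{every} $\nu\ge0$, the exponential factor may be traded for an arbitrarily large power of $(\Re s)^{-1}$, making $\nu$ effectively unbounded and $\min(2m-1,\,m+1-\kappa+\nu)=2m-1$. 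This is exactly where the exponential decay of the potentials at interior points buys the full classical order, and where the stronger regularity $r>2m+4$ (covering the requirement $r>2m+\tfrac72$ for $\kappa=\tfrac92$) is needed.

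For the space error I would argue at fixed frequency and then transfer. By the boundedness of Lemma~\ref{lemma:A-boundedness} and the coercivity of Lemma~\ref{lem:A-coercivity}, the Galerkin solution of the boundary integral equation obeys a C\'ea estimate whose constant is the quotient of the two, of size $\lesssim m_{\varepsilon,\mu}(s)^2$; hence
\[
\bigl\|\bigl(\bA_h^{-1}(s)-\bA^{-1}(s)\bigr)\mathbfcal N\bginc\bigr\|_{\mXG^4}
\lesssim m_{\varepsilon,\mu}(s)^2\,\inf_{\bxi_h\in\X_h^4}\bigl\|\bxi_h-\bA^{-1}(s)\mathbfcal N\bginc\bigr\|_{\mXG^4}.
\]
Composing with $\W^{\pm}(s)$ — bounded in $\H(\curl)$ through Lemma~\ref{lem:transmission} and Proposition~\ref{prop:wp-time-harm-scattering}, and pointwise through Lemma~\ref{lem:bound-away-boundary-deriv} — and inserting the Raviart--Thomas rate of Lemma~\ref{lem:RT} produces the spatial factor $h^{k+3/2}\,\|\bphi(s)\|_{\H^{k+1}_\times(\Gamma)^2}$ times a fixed polynomial power of $s$. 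The convolution bound \eqref{sobolev-bound}, together with the stability of convolution quadrature, then converts these $s$-powers into a finite number of time derivatives of $\bphi$; this is why $\bphi$ is assumed in $\bC^{10}([0,T],\H^{k+1}_\times(\Gamma)^2)$, and the outcome is the uniform bound $O(h^{k+3/2})$ in both the $\H(\curl,\Omega^{\pm})$ and the interior $\bC^1(\overline\Omega^{\pm}_d)^3$ norms.

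The main obstacle is obtaining the interior $\bC^1$ estimate at the \emph{full} temporal order $2m-1$ after spatial discretization. The exponential factor $\e^{-dc\,\Re s}$ that drives this order is available for the continuous operator through Lemma~\ref{lem:bound-away-boundary-deriv}, but $\bA_h^{-1}(s)$ only inherits polynomial $s$-bounds from coercivity, so $\U_h^{\pm}(s)$ does not obviously retain any decay in $\Re s$ away from $\Gamma$. I would resolve this exactly as signalled in the introduction: by requiring the additional data regularity (the index $r>2m+4$ and, alternatively, the hypothesis \eqref{eq:additional-assumption}) and by reproducing the argument behind \eqref{eq:bound-U-h-dispersive}, which yields an $s$-explicit bound for the discrete field at interior points that retains exponential-type decay in $\Re s$; once this discrete bound is in hand, Lemma~\ref{lem:RK-CQ} again delivers order $2m-1$, and the remaining bookkeeping follows the template of \cite[Theorem~6.1]{NKL22}.
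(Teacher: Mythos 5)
Your proposal is correct and follows essentially the same route as the paper's proof: the identical top-level splitting \eqref{total-error-split} into the temporal error $\bigl(\U^{\pm}(\pttau)-\U^{\pm}(\pt)\bigr)\bginc$, handled by Lemma~\ref{lem:RK-CQ} with \eqref{eq:U-Hcurl-bound} (giving $m-\tfrac12$ in $\H(\curl,\Omega^{\pm})$) resp.\ with the exponentially decaying pointwise bounds traded for powers of $(\Re s)^{-1}$ (giving $2m-1$ under $r>2m+\tfrac72$), plus the spatial error via the Lax--Milgram/C\'ea estimate with constant of size $m_{\epsmu}(s)^2$ and the Raviart--Thomas rate of Lemma~\ref{lem:RT}. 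The only minor deviations are that the paper realizes your black-boxed ``stability of convolution quadrature'' for the discrete spatial-error term $\W^{\pm}(\pttau)\mathbfcal{E}_h(\pttau)\bphi^{\pm}$ explicitly, by adding and subtracting $\W^{\pm}(\pt)\mathbfcal{E}_h(\pt)\bphi^{\pm}$ and applying Lemma~\ref{lem:RK-CQ} with $M_\sigma\le C_\sigma h^{k+3/2}$, $\kappa=6$, $\nu=3$, $q=2$, $r=10$ (which is exactly where the hypothesis $\bphi\in\bC^{10}$ enters), and that your stated obstacle --- that $\U^{\pm}_h(s)$ ``does not obviously retain any decay'' --- is in fact no obstacle: the factor $\e^{-dc\,\Re s}$ comes from the potential operators evaluated at distance $d$ from $\Gamma$ (Lemma~\ref{lem:bound-away-boundary-deriv}) and survives composition with the merely polynomially bounded Galerkin solution operator \eqref{L-h-bound}, which is precisely the paper's derivation of \eqref{eq:bound-U-h-dispersive} that you then invoke correctly.
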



\begin{proof} The proof is, due to the similarities of the time-harmonic bounds, essentially identical to the proof of \cite[Theorem~6.1]{NKL22}. We repeat the arguments given there and apply them to the present setting, to keep the paper self-contained. We structure the proof into three parts (a)--(c).

(a) {\it (Discretized time-harmonic boundary integral equation).\/} We start with the time-harmonic boundary integral equation \eqref{eq:bie-A-weak}, for $\Re s  > 0$.
We denote by $\bL_h(s):\mXG^4\to \X_h^4$ the solution operator $\widehat \bg\mapsto \widehat{\bphi_h}$ of the Galerkin approximation in $\X_h^4$, 
\begin{align}\label{bie-A-tau-h-s-g}
\left[
 \bxi_h ,
\bA(s) \widehat\bphi_h
\right]_\Gamma=
\left[
\bxi_h,
\bwg\right]_\Gamma
\quad \quad \forall \,\bxi_h \in \X_h^4,
\end{align}
which by the bound of $\bA(s)$ in Lemma~\ref{lemma:A-boundedness}, the coercivity estimate of Lemma~\ref{lem:A-coercivity} and the Lax--Milgram lemma yields, for $\Re s\ge \sigma >0$, the bound
\begin{equation}\label{L-h-bound}
\| \bL_h(s) \|_{ \X_h^4 \leftarrow \mXG^4} \le C_{\sigma} \dfrac{\abs{s}^2}{\Re s},
\end{equation}
where $C_{\sigma}$ depends on the surface $\Gamma$ and $\sigma$.
The associated Ritz projection $\bR_h(s):\mXG^4 \to \X_h^4$ maps $(\widehat \bphi)\in\mXG^4$ to $\bwphi_h\in \X_h^4$, determined by
\begin{align*}
\left[
 \bxi_h
,
\bA(s)\bwphi_h
\right]_\Gamma=
\left[
\bxi_h , 
\bA(s)\bwphi\right]_\Gamma
\quad \quad \forall \,\bxi_h \in \X_h^4.
\end{align*}
Again by Lemmas~\ref{lemma:A-boundedness} and~\ref{lem:A-coercivity} and the Lax--Milgram lemma, this problem has a unique solution $(\bwvarphi_h,\bwpsi_h)\in \X_h^4$, and by C\'ea's lemma,
where the right-hand side is further bounded by Lemma~\ref{lem:RT}.
With the stronger passivity, we arrive at the bound
\begin{equation*}
	\left\| \bwphi_h- \bwphi \right\|_{\mXG^4}
	\le \frac{C_\sigma}{c_\sigma} \left( \frac{|s|^2}{\Re s} \right)^2
	\inf_{\bxi_h\in \X_h^4}
	\left\|  \bxi_h - \bwphi\right\|_{ \mXG^4},
\end{equation*}
for all $\Re s \ge \sigma > 0$.

In combination with the approximation result of Lemma~\ref{lem:RT}, we can thus bound the associated error operator $\mathbfcal{E}_h(s) = \bR_h(s)-\bId$ in the operator norm from $\H^{k+1}_\times(\Gamma)^4$ to $\mXG^4$ with the bound, for $\Re s \ge \sigma > 0$,
\begin{equation}\label{Err-h-bound}
\| \mathbfcal{E}_h(s) \|_{\mVG\times\mXG \leftarrow \H^{k+1}_\times(\Gamma)^2} \le \widetilde C_\sigma \frac{|s|^4}{(\Re s)^2}\, h^{k+3/2} .
\end{equation}

(b) {\it (Error of the spatial semi-discretization).}
We continue with the spatial semi-discretization of the time-dependent boundary integral equation \eqref{bie-A-t}, which reads
\begin{align}\label{bie-A-h}
\left[
\bxi_h
,
\A(\pt)\bphi \right]
=\left[  \bxi_h
,\bg\right]_\Gamma \quad \quad \forall \,\bxi_h \in \X_h^4.
\end{align}
This formulation has the unique solution
$$
\bphi_h = \bL_h(\pt)\bg = \bR_h(\pt) \bphi ,
$$
where $\bphi=\A^{-1}(\pt)\bg$ is the solution of \eqref{bie-A-t}.
With the exterior and interior potential operators collected in the block operators $\mathbfcal{W}^{\pm}(s)$
and the auxiliary operators $\mathbfcal P^{\pm}$ and $\mathbfcal N$ defined in \eqref{eq:auxiliary-operators} we set
\begin{equation}\label{U-h}
\U^{\pm}_h(s) = \W^{\pm}(s)\mathbfcal P^{\pm}\bL_h(s) \mathbfcal N \colon \mXG^2 \to \H(\curl,\Omega^{\pm})^2.
\end{equation}
With the established bounds from Lemma~\ref{lem:transmission} and \eqref{L-h-bound}, this operator family is bounded by
\begin{equation}\label{U-h-bound}
\| \U^{\pm}_h(s) \|_{\H(\curl,\Omega^{\pm})^2 \leftarrow \mXG^2} \le \bar C_\sigma\,  \frac{|s|^4}{(\Re s)^2}.
\end{equation}
The spatial semi-discretization of the scattering problem is then the forward convolution of $ \U^{\pm}_h(\pt)$ with the incident wave, which reads
$$
\begin{pmatrix} \E^{\pm}_h \\ \B^{\pm}_h
\end{pmatrix}
= \U^{\pm}_h(\pt) \bginc.
$$
In view of \eqref{U-t}, its error is
\begin{align*}
\begin{pmatrix} \E^{\pm}_h \\ \B^{\pm}_h
\end{pmatrix} -
\begin{pmatrix} \E^{\pm} \\  \B^{\pm}
\end{pmatrix}
&=
\U^{\pm}_h(\pt)\bginc - \U^{\pm}(\pt)\bginc 
=
 \W^{\pm}(\pt)\bphi^{\pm}_h- \W^{\pm}(\pt)\bphi^{\pm}
\\
&= \W^{\pm}(\pt)(\bR_h - \bId)\bphi^{\pm}= \W^{\pm}(\pt) \,\mathbfcal{E}_h(\pt) \bphi^{\pm}.
\end{align*}
Using the bound of Lemma~\ref{lem:transmission} for the potential operator $\W^{\pm}(s)$, the bound \eqref{Err-h-bound} for the error operator $\mathbfcal{E}_h(s)$, and the temporal Sobolev bound stated in \cite[Lemma~2.1]{L94} (with $\kappa=6$) for their composition, and finally the Sobolev embedding
$H^1(0,T;H)\subset C([0,T],H)$ for any Hilbert space $H$, we obtain for the error of the spatial semi-discretization
\begin{align}\label{error-h}
&\max_{0\le t \le T} \left\| \begin{pmatrix} \E^{\pm}_h(t) \\  \B^{\pm}_h(t)
\end{pmatrix} -
\begin{pmatrix} \E^{\pm}(t) \\  \B^{\pm}(t)
\end{pmatrix}
\right\|_{\H(\curl,\Omega^{\pm})^2}
\\
\nonumber
&\le
C  \left\| \begin{pmatrix} \E^{\pm}_h \\  \B^{\pm}_h
\end{pmatrix} -
\begin{pmatrix} \E^{\pm} \\  \B^{\pm}
\end{pmatrix}
\right\|_{H^1_0(0,T;\H(\curl,\Omega^{\pm})^2)}
\le C_T \, h^{k+3/2} \left\| \bphi^{\pm} \right\|_{H^7_0(0,T;\H^{k+1}_\times(\Gamma)^2)}.
\end{align}
Using the same argument with the pointwise bounds away from the boundary given by Lemma~\ref{lem:bound-away-boundary-deriv}, we further obtain
\begin{equation}\label{error-h-d}
\max_{0\le t \le T} \left\| \begin{pmatrix} \E^{\pm}_h(t) \\  \B^{\pm}_h(t)
\end{pmatrix} -
\begin{pmatrix} \E^{\pm}(t) \\  \B^{\pm}(t)
\end{pmatrix}
\right\|_{\bC^1(\overline\Omega^{\pm}_d)^2}
\le C_T \, h^{k+3/2} \left\| \bphi^{\pm} \right\|_{\H^{\textcolor{blue}{9}}_0(0,T;\H^{k+1}_\times(\Gamma)^4)}.
\end{equation}

(c) {\it (Error of the full discretization).\/}  The total error is (omitting here the omnipresent superscript $n$)
\begin{equation}\label{total-error-split}
\begin{pmatrix} \E^{\pm}_{\tau,h} \\  \H^{\pm}_{\tau,h}
\end{pmatrix}
-
\begin{pmatrix} \E^{\pm}_{\tau}\ \\  \E^{\pm}_{\tau}
\end{pmatrix}
\quad\
+
\quad\
\begin{pmatrix} \E^{\pm}_{\tau} \\  \E^{\pm}_{\tau}
\end{pmatrix}
-
\begin{pmatrix} \E^{\pm}\\  \B^{\pm}
\end{pmatrix}
.
\end{equation}
The second difference is the error of the temporal semi-discretization, which is bounded by applying Lemma~\ref{rkcq} with the time-harmonic bounds on $\U^{\pm}(s)$ due to
\begin{equation}\label{eq:temporal-semidis}
	\begin{pmatrix} \E^{\pm}_\tau \\ \B^{\pm}_\tau
	\end{pmatrix}
	-
	\begin{pmatrix} \E^{\pm} \\  \B^{\pm}
\end{pmatrix}	
	= \left( \U^{\pm}(\pttau)-\U^{\pm}(\pt) \right) \bginc .
\end{equation}
With the time-harmonic bound \eqref{eq:U-Hcurl-bound}, we obtain an estimate of the order $O(\tau^{m-1/2})$ in the $\H(\curl,\Omega^{\pm})$-norm, whereas applying the time-harmonic \eqref{eq:U-pointwise-bound} yields an error estimate of the order $O(\tau^{2m-1})$ in the $\bC^1(\overline\Omega_d^{\pm})$-norm.

The first difference of the total error \eqref{total-error-split} is rewritten as 
\begin{align}\label{eq:final-split}
\begin{split}
\W^{\pm}(\pttau)\mathbfcal{E}_h(\pttau) \bphi^{\pm} &=
\left(\W^{\pm}(\pttau)\mathbfcal{E}_h(\pttau) \bphi^{\pm} -
\W^{\pm}(\pt)\mathbfcal{E}_h(\pt) \bphi^{\pm}\right)
\\
&+ \
\W^{\pm}(\pt)\mathbfcal{E}_h(\pt) \bphi^{\pm}.
\end{split}
\end{align}
The final error term is the spatial semi-discretization studied in part~(b),  which is therefore bounded by \eqref{error-h}. To bound the remaining difference, which is a convolution quadrature error, we employ  Lem\-ma~\ref{lem:RK-CQ}. This gives an $O(h^{k+3/2})$ error in the $\H(\curl,\Omega^{\pm})^2$ norm, using that by Lemma~\ref{lem:transmission} and \eqref{Err-h-bound} we have here $M_\sigma\le C_\sigma h^{k+3/2}$, $\kappa=6$, $\nu=3$ in \eqref{KLM-bound} with $\W(s)\mathbfcal{E}_h(s)$ in the role of $\K(s)$,
 and choosing $q=2$ and $r=10>2q-1+\kappa$. Note that here $\min(2q-1,q+1-\kappa+\nu)=q-2=0$. Altogether, this yields the stated $O(\tau^{m-1/2}+h^{k+3/2})$ error bound in the $\H(\curl,\Omega^{\pm})^2$ norm.

The full-order error bound away from the boundary can be shown without requiring this additional assumption on $r$. To show this bound, we rewrite the error as
\begin{equation*}
	\begin{pmatrix} \left(\E^{\pm}_{\tau,h}\right)^n \\  \left(\H^{\pm}_{\tau,h}\right)^n
	\end{pmatrix}
	-
	\begin{pmatrix} \E^{\pm}_{h}(t_n)\ \\  \E^{\pm}_{h}(t_n)
	\end{pmatrix}
	\quad\
	+
	\quad\
	\begin{pmatrix} \E^{\pm}_{h}(t_n)\ \\  \E^{\pm}_{h}(t_n)
\end{pmatrix}
	-
	\begin{pmatrix} \E^{\pm}(t_n)\\  \B^{\pm}(t_n)
	\end{pmatrix}
	.
\end{equation*}
The second difference is the error of the spatial semi-discretization studied in part~(b). The first difference is a convolution quadrature error for the transfer operator $\U^{\pm}_h(s)$ of \eqref{U-h}:
$$
\begin{pmatrix} \left(\E^{\pm}_{\tau,h}\right)^n \\ \left( \B^{\pm}_{\tau,h}\right)^n
\end{pmatrix}
-
\begin{pmatrix} \E^{\pm}_h(t_n) \\  \B^{\pm}_h(t_n)
\end{pmatrix}
=
\Bigl[\bigl(\U^{\pm}_h(\pttau)\bginc \bigr)^{n-1}\Bigr]_m - \U^{\pm}_h(\pt)\bginc(t_n).
$$
Using this argument to bound the error in the $\H(\curl,\Omega^{\pm})$ norm by Lemma~\ref{lem:RK-CQ} would reduce the predicted error rate to $O(\tau^{m-1})$, hence the different argument structure before.

The exponential decay in the bound \eqref{eq:U-pointwise-bound} exceeds any polynomial decay, which gives with \eqref{L-h-bound} a constant $C_{\sigma,d}$, depending only on $\sigma$ and $d$, such that
\begin{align}\label{eq:bound-U-h-dispersive}
	\| \U^{\pm}_h(s) \|_{(\bC^1(\overline\Omega^{\pm}_d)^3)^2\leftarrow \mXG^2}
	\le C_{\sigma,d}\frac{\abs{s}^{\frac{9}{2}}}{\left(\Re s\right)^{\frac{3}{2}+m+1}}\,
	\norm{\bwginc}_{\mXG^2},
\end{align}
for $\Re s\ge \sigma>0$, by using $e^{-x}\le C x^{-m-1}$ for $x\ge \sigma$.
We then obtain the stated full convergence rates in the $\H(\curl,\Omega^{\pm}_d)$ norm and the $\bC^1(\overline\Omega^{\pm}_d)$ norm by Lemma~\ref{lem:RK-CQ}, with $r>2m+7/2$ and $\kappa,\nu$ chosen accordingly to the bound above. 

\end{proof}

\section{Numerical experiments}
\label{sec:numerics}
We complement the theory of the previous sections by the following experiments. The boundary element approximations of the  boundary and potential operators of the Maxwell problem were realized by the library Bempp, which is described in \cite{Bempp}. The codes used to generate the simulation data and the figures are available via github.\footnote{\href{https://github.com/joergnick/cqExperiments}{https://github.com/joergnick/cqExperiments}, last accessed on 25/10/2023.}

All experiments have been conducted with the following setting. One or several scatterers are illuminated by an incoming plane wave of the form
\begin{align}\label{eq:inc}
\Einc(\bx,t) =
\boldsymbol{p} e^{-c\norm{\boldsymbol{d}\cdot \bx +t-t_0}^2},
\end{align}
The polarization vector is set to $\boldsymbol{p} = \frac{1}{\sqrt{2}}(-1,0,-1)^T$, the direction to $\boldsymbol{d} =\frac{1}{\sqrt{2}}((-1,0,1)^T$ and the temporal shift to $t_0=4$.
 We observe the interaction of the wave with different scatterers until the final time $T=8$.
The physical constants in the exterior domain $\Omega^+$ are set to one, i.e. $\varepsilon^+=\mu^+=1$. Inside the obstacle, we enforce a fractional material law, which reads
\begin{align}\label{eq:fract-const}
	\varepsilon^-(s) = \frac{1}{2}+\frac{1}{1+s^{1/2}}, \quad 
	\mu^-(s) = \frac{1}{2}.
\end{align}
The corresponding time-varying material law includes fractional time derivatives and is therefore nonlocal in time. Moreover,  since $\varepsilon^-(s)$ is not a rational function, techniques based on memory variables are not available.

\subsection{Scattering from a sphere: Convergence plots}
To investigate empirical convergence rates, we consider the following simple setting. The exterior of a unit sphere centered at the origin is initially at rest and excited by the plane wave \eqref{eq:inc} with $c=10$. A sequence of grids, with the mesh widths $h_j=2^{-j/2}$ for $j=0,...,7$ is used with $0$-th order Raviart--Thomas elements as the space discretization. As the time discretization, we employ the convolution quadrature method based on the $2$-stage Radau IIA method, for $N_j=8\cdot2^j$ for $j=0,...,7$. The numerical approximations are compared with a reference solution obtained by the same discretization, that has been computed with $h=2^{-4}$, which corresponds to a boundary element space of $12534$ degrees of freedom, and $N=2048$ time steps.

\begin{figure}[htbp]
	\centering
	\includegraphics[trim = 0mm 0mm 0mm 0mm, clip,width=1.0\textwidth,height=0.55\textwidth]{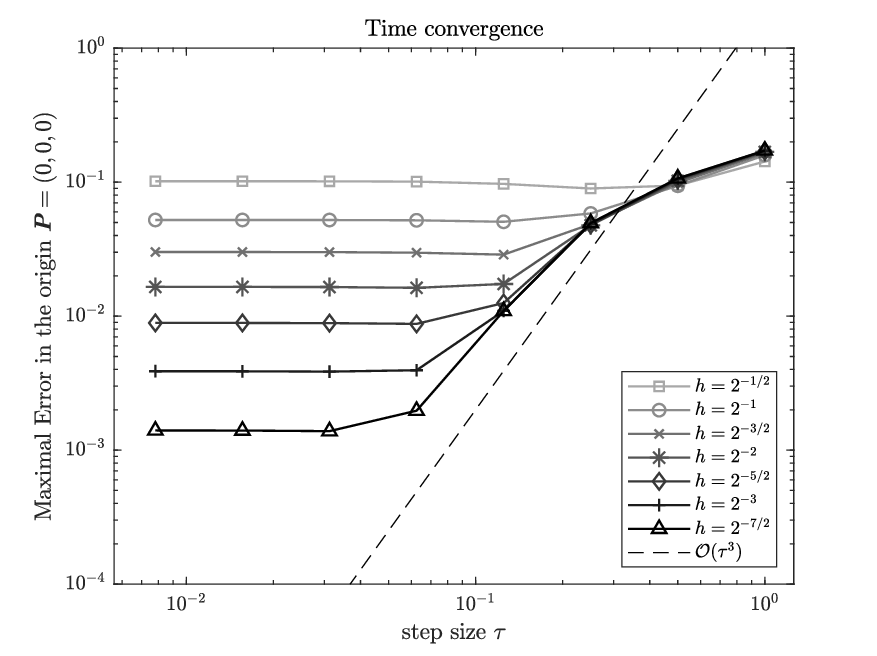}
	\caption{Time convergence plot of the fully discrete system for a spherical scatterer, for $0$th order Raviart--Thomas boundary elements and the $2$-stage Radau IIA based Runge--Kutta convolution quadrature method.}
	\label{fig:time_conv}
\end{figure}

\begin{figure}[htbp]
	\centering
	\includegraphics[trim = 0mm 0mm 0mm 0mm, clip,width=1.0\textwidth,height=0.55\textwidth]{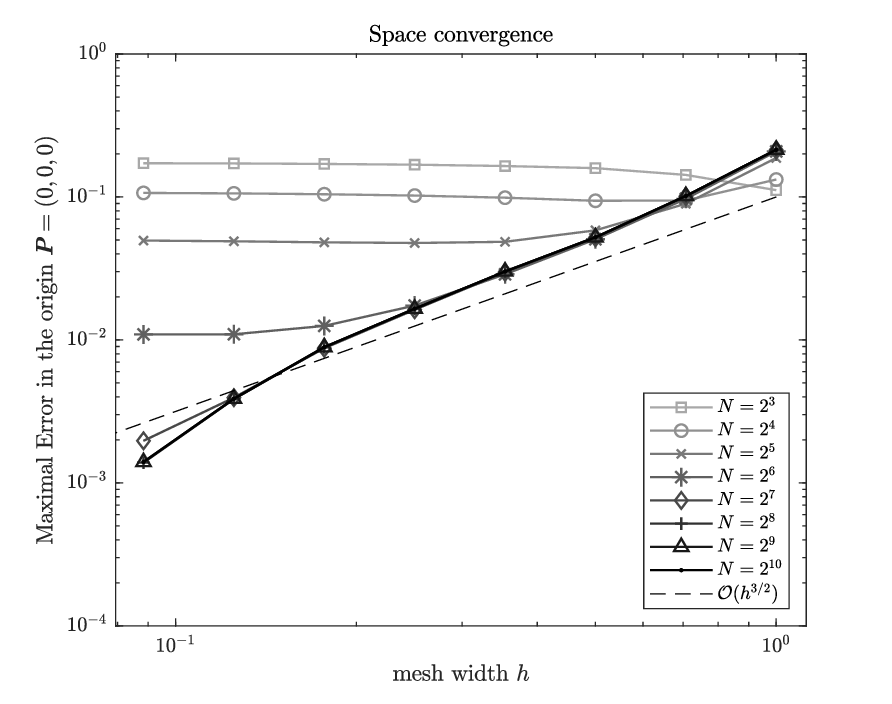}
	\caption{Space convergence plot of the fully discrete system for a spherical scatterer, for $0$th order Raviart--Thomas boundary elements and the $2$-stage Radau IIA based Runge--Kutta convolution quadrature method. 
	}
	\label{fig:space_conv}
\end{figure}

\subsection{Scattering from two cubes: Visualization of the numerical solution}
In the second experiment, we choose the union of two unit cubes, separated by a gap of length $l=0.5$, as the interior domain $\Omega^-$. The plane wave \eqref{eq:inc} illuminates the scatterers,  and $c=100$.
Figure~\ref{fig:frames} depicts the approximation of the total wave, evaluated in the $y=0.5$ plane, which cuts through the middle of the cubes, at several time points. The scheme has been used with a $0-th$ order Raviart--Thomas boundary element discretization with 
$11088$ degrees of freedom and the convolution quadrature time discretization based on the $2$--stage Radau IIA method with $N=2096$ time steps.

\begin{figure}
	\centering
\vspace*{-0.4cm}
	\includegraphics[trim = 0mm 0mm 0mm 5mm,clip]{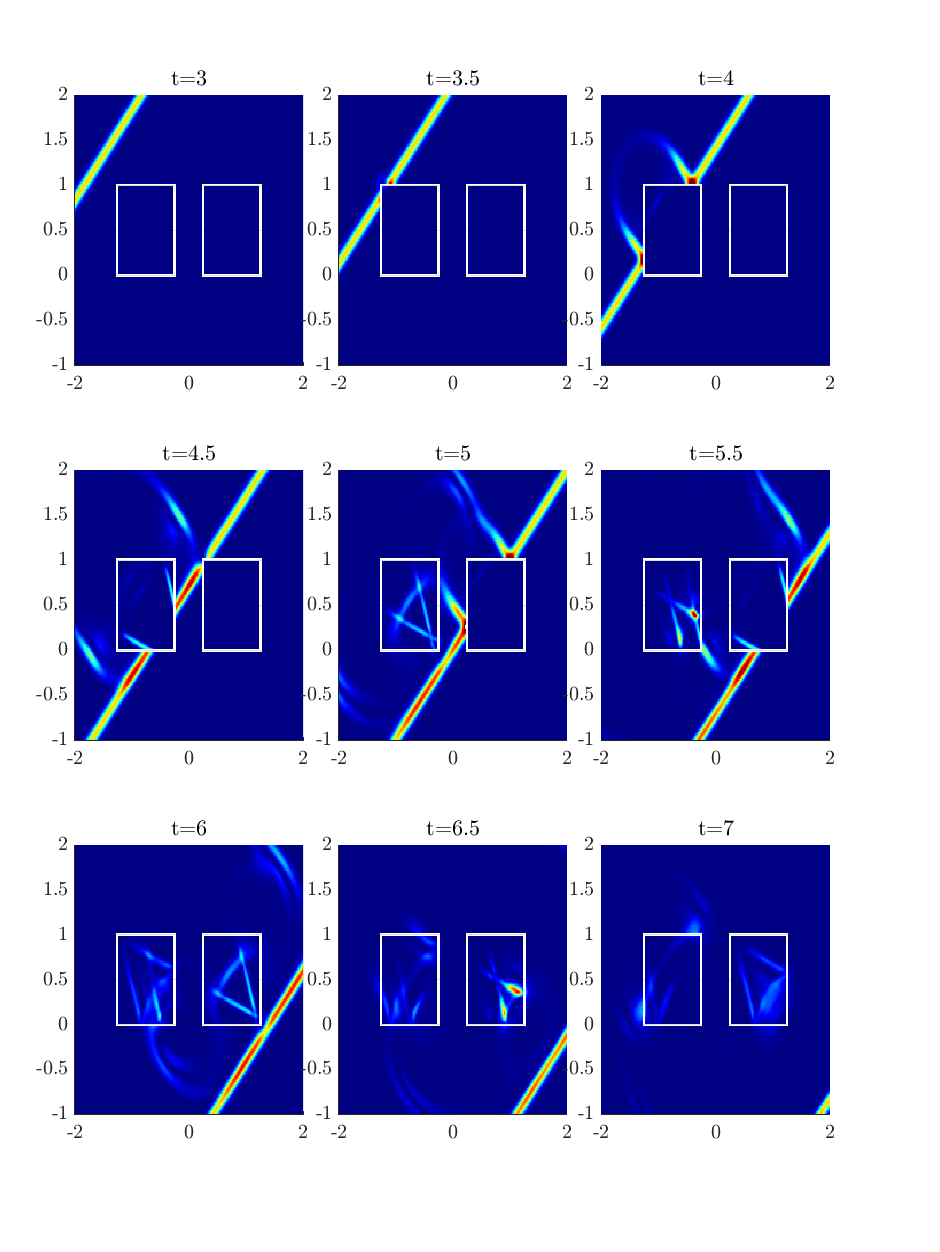}
	\vspace*{-2cm}
	\caption{Scattering from two cubes, where the time-fractional material law determined by \eqref{eq:fract-const} is enforced.}
	\label{fig:frames}
\end{figure}

\section*{Acknowledgement} This work was supported by the Deutsche Forschungsgemeinschaft (DFG, German Research Foundation) -- Project-ID 258734477 -- SFB 1173. We thank Marlis Hochbruck for turning our attention to Maxwell's equations with dispersive material laws.

\bibliographystyle{IMANUM-BIB}
	\bibliography{Lit}

\begin{thebibliography}{}

\bibitem[Alonso \& Valli(1996)Alonso \& Valli]{AV96}
{\sc Alonso, A. \& Valli, A.} (1996)
\newblock Some remarks on the characterization of the space of tangential
  traces of $h (rot; \omega)$ and the construction of an extension operator.
\newblock {\em Manuscripta Math.}, {\bf 89}, 159--178.

\bibitem[Ballani {\em et~al.}(2013)Ballani, Banjai, Sauter, \& Veit]{BBSV13}
{\sc Ballani, J., Banjai, L., Sauter, S. \& Veit, A.} (2013)
\newblock Numerical solution of exterior {M}axwell problems by galerkin {BEM}
  and {R}unge--kutta convolution quadrature.
\newblock {\em Numer. Math.}, {\bf 123}, 643--670.

\bibitem[Banjai {\em et~al.}(2011)Banjai, Lubich, \& Melenk]{BLM11}
{\sc Banjai, L., Lubich, C. \& Melenk, J.~M.} (2011)
\newblock {R}unge--{K}utta convolution quadrature for operators arising in wave
  propagation.
\newblock {\em Numer. Math.}, {\bf 119}, 1--20.

\bibitem[Banjai {\em et~al.}(2012)Banjai, Messner, \&
  Schanz]{BanjaiMessnerSchanz}
{\sc Banjai, L., Messner, M. \& Schanz, M.} (2012)
\newblock Runge-{K}utta convolution quadrature for the boundary element method.
\newblock {\em Comput. Methods Appl. Mech. Engrg.}, {\bf 245/246}, 90--101.

\bibitem[Banjai {\em et~al.}(2015)Banjai, Lubich, \& Sayas]{BLS15}
{\sc Banjai, L., Lubich, C. \& Sayas, F.-J.} (2015)
\newblock Stable numerical coupling of exterior and interior problems for the
  wave equation.
\newblock {\em Numer. Math.}, {\bf 129}, 611--646.

\bibitem[Banjai {\em et~al.}(2022)Banjai, Lubich, \& Nick]{BLN20}
{\sc Banjai, L., Lubich, C. \& Nick, J.} (2022)
\newblock Time-dependent acoustic scattering from generalized impedance
  boundary conditions via boundary elements and convolution quadrature.
\newblock {\em IMA J. Numer. Anal.}, {\bf 150}, 1--26.

\bibitem[Banjai \& Kachanovska(2014)Banjai \& Kachanovska]{BanjaiKachanovska}
{\sc Banjai, L. \& Kachanovska, M.} (2014)
\newblock Sparsity of {R}unge-{K}utta convolution weights for the
  three-dimensional wave equation.
\newblock {\em BIT\/}, {\bf 54}, 901--936.

\bibitem[Banjai \& Lubich(2019)Banjai \& Lubich]{BanjaiLubich2019}
{\sc Banjai, L. \& Lubich, C.} (2019)
\newblock Runge-{K}utta convolution coercivity and its use for time-dependent
  boundary integral equations.
\newblock {\em IMA J. Numer. Anal.}, {\bf 39}, 1134--1157.

\bibitem[Banjai \& Rieder(2018)Banjai \& Rieder]{BanjaiRieder}
{\sc Banjai, L. \& Rieder, A.} (2018)
\newblock Convolution quadrature for the wave equation with a nonlinear
  impedance boundary condition.
\newblock {\em Math. Comp.}, {\bf 87}, 1783--1819.

\bibitem[Banjai \& Sauter(2009)Banjai \& Sauter]{BS09}
{\sc Banjai, L. \& Sauter, S.} (2009)
\newblock Rapid solution of the wave equation in unbounded domains.
\newblock {\em SIAM J. Numer. Anal.}, {\bf 47}, 227--249.

\bibitem[Banjai \& Sayas(2022)Banjai \& Sayas]{BanS22}
{\sc Banjai, L. \& Sayas, F.-J.} (2022)
\newblock {\em Integral Equation Methods for Evolutionary PDE\/}.
\newblock Springer Series in Computational Mathematics.
\newblock Springer Cham, pp. XIX+268.

\bibitem[Brezzi \& Fortin(1991)Brezzi \& Fortin]{BreF91}
{\sc Brezzi, F. \& Fortin, M.} (1991)
\newblock {\em Mixed and hybrid finite element methods\/}. Springer Series in
  Computational Mathematics,  vol.~15.
\newblock Springer-Verlag, New York, pp. x+350.

\bibitem[Buffa {\em et~al.}(2002)Buffa, Costabel, \& Sheen]{BCS02}
{\sc Buffa, A., Costabel, M. \& Sheen, D.} (2002)
\newblock On traces for {$H(\curl, \Om)$} in {L}ipschitz domains.
\newblock {\em J. Math. Anal. Appl.}, {\bf 276}, 845--867.

\bibitem[Buffa \& Christiansen(2003)Buffa \& Christiansen]{BC03}
{\sc Buffa, A. \& Christiansen, S.~H.} (2003)
\newblock The electric field integral equation on {L}ipschitz screens:
  definitions and numerical approximation.
\newblock {\em Numer. Math.}, {\bf 94}, 229--267.

\bibitem[Buffa \& Hiptmair(2003)Buffa \& Hiptmair]{BH03}
{\sc Buffa, A. \& Hiptmair, R.} (2003)
\newblock Galerkin boundary element methods for electromagnetic scattering.
\newblock {\em Topics in computational wave propagation\/}.
\newblock Springer, pp. 83--124.

\bibitem[Busch {\em et~al.}(2011)Busch, K\"onig, \& Niegemann]{BKN11}
{\sc Busch, K., K\"onig, M. \& Niegemann, J.} (2011)
\newblock Discontinuous {G}alerkin methods in nanophotonics.
\newblock {\em Laser \& Photonics Reviews\/}, {\bf 5}, 773--809.

\bibitem[Cassier {\em et~al.}(2017)Cassier, Joly, \& Kachanovska]{CJK17}
{\sc Cassier, M., Joly, P. \& Kachanovska, M.} (2017)
\newblock Mathematical models for dispersive electromagnetic waves: an
  overview.
\newblock {\em Comput. Math. Appl.}, {\bf 74}, 2792--2830.

\bibitem[Chan \& Monk(2015)Chan \& Monk]{ChanMonk2015}
{\sc Chan, J. F.-C. \& Monk, P.} (2015)
\newblock Time dependent electromagnetic scattering by a penetrable obstacle.
\newblock {\em BIT\/}, {\bf 55}, 5--31.

\bibitem[Chen {\em et~al.}(2012)Chen, Monk, Wang, \& Weile]{ChenMonkWangWeile}
{\sc Chen, Q., Monk, P., Wang, X. \& Weile, D.} (2012)
\newblock Analysis of convolution quadrature applied to the time-domain
  electric field integral equation.
\newblock {\em Commun. Comput. Phys.}, {\bf 11}, 383--399.

\bibitem[D{\"o}lz {\em et~al.}(2021)D{\"o}lz, Egger, \& Shashkov]{DES21}
{\sc D{\"o}lz, J., Egger, H. \& Shashkov, V.} (2021)
\newblock A convolution quadrature method for maxwell's equations in dispersive
  media.
\newblock {\em Scientific Computing in Electrical Engineering\/} (M.~van
  Beurden, N.~Budko \& W.~Schilders eds).
\newblock Cham: Springer International Publishing, pp. 107--115.

\bibitem[Eberle {\em et~al.}(2021)Eberle, Florian, Hiptmair, \& Sauter]{EFHS21}
{\sc Eberle, S., Florian, F., Hiptmair, R. \& Sauter, S.~A.} (2021)
\newblock A stable boundary integral formulation of an acoustic wave
  transmission problem with mixed boundary conditions.
\newblock {\em SIAM J. Math. Anal.}, {\bf 53}, 1492--1508.

\bibitem[Garrappa(2015)Garrappa]{G15}
{\sc Garrappa, R.} (2015)
\newblock On finite difference approximations of {H}avriliak--{Negami}
  operators.
\newblock {\em Mechanics, Energy, Environment\/}.
\newblock {\em Mechanics, Energy, Environment\/}., pp. 97--102.

\bibitem[Hairer \& Wanner(1991)Hairer \& Wanner]{HairerWannerII}
{\sc Hairer, E. \& Wanner, G.} (1991)
\newblock {\em Solving ordinary differential equations. {II}\/}. Springer
  Series in Computational Mathematics,  vol.~14.
\newblock Springer-Verlag, Berlin, pp. xvi+601.
\newblock Stiff and differential-algebraic problems.

\bibitem[Hiptmair {\em et~al.}(2014)Hiptmair, L\'{o}pez-Fern\'{a}ndez, \&
  Paganini]{HiptmairLopezFernandezPaganini}
{\sc Hiptmair, R., L\'{o}pez-Fern\'{a}ndez, M. \& Paganini, A.} (2014)
\newblock Fast convolution quadrature based impedance boundary conditions.
\newblock {\em J. Comput. Appl. Math.}, {\bf 263}, 500--517.

\bibitem[Kov\'{a}cs \& Lubich(2017)Kov\'{a}cs \& Lubich]{KL17}
{\sc Kov\'{a}cs, B. \& Lubich, C.} (2017)
\newblock Stable and convergent fully discrete interior--exterior coupling of
  {M}axwell’s equations.
\newblock {\em Numer. Math.}, {\bf 137}, 91--117.

\bibitem[Laliena \& Sayas(2009)Laliena \& Sayas]{LS09}
{\sc Laliena, A.~R. \& Sayas, F.-J.} (2009)
\newblock Theoretical aspects of the application of convolution quadrature to
  scattering of acoustic waves.
\newblock {\em Numer. Math.}, {\bf 112}, 637--678.

\bibitem[Li(2016)Li]{L16}
{\sc Li, J.} (2016)
\newblock A literature survey of mathematical study of metamaterials.
\newblock {\em Int. J. Numer. Anal. Model.}, {\bf 13}, 230--243.

\bibitem[Lubich(1994)Lubich]{L94}
{\sc Lubich, C.} (1994)
\newblock On the multistep time discretization of linear initial-boundary value
  problems and their boundary integral equations.
\newblock {\em Numer. Math.}, {\bf 67}, 365--389.

\bibitem[Lubich \& Ostermann(1993)Lubich \& Ostermann]{LubichOstermann_RKcq}
{\sc Lubich, C. \& Ostermann, A.} (1993)
\newblock Runge-{K}utta methods for parabolic equations and convolution
  quadrature.
\newblock {\em Math. Comp.}, {\bf 60}, 105--131.

\bibitem[Nicaise \& Pignotti(2006)Nicaise \& Pignotti]{NP06}
{\sc Nicaise, S. \& Pignotti, C.} (2006)
\newblock Stability and instability results of the wave equation with a delay
  term in the boundary or internal feedbacks.
\newblock {\em SIAM J. Control Optim.}, {\bf 45}, 1561--1585.

\bibitem[Nick {\em et~al.}(2021)Nick, Kov\'{a}cs, \& Lubich]{NKL2020}
{\sc Nick, J., Kov\'{a}cs, B. \& Lubich, C.} (2021)
\newblock Correction to: {S}table and convergent fully discrete
  interior-exterior coupling of {M}axwell's equations.
\newblock {\em Numer. Math.}, {\bf 147}, 997--1000.

\bibitem[Nick {\em et~al.}(2022)Nick, Kov{\'a}cs, \& Lubich]{NKL22}
{\sc Nick, J., Kov{\'a}cs, B. \& Lubich, C.} (2022)
\newblock Time-dependent electromagnetic scattering from thin layers.
\newblock {\em Numer. Math.}, {\bf 150}, 1123–1164.

\bibitem[Pignotti(2012)Pignotti]{P12}
{\sc Pignotti, C.} (2012)
\newblock A note on stabilization of locally damped wave equations with time
  delay.
\newblock {\em Systems Control Lett.}, {\bf 61}, 92--97.

\bibitem[Qiu \& Sayas(2016)Qiu \& Sayas]{QiuS16}
{\sc Qiu, T. \& Sayas, F.-J.} (2016)
\newblock The {C}ostabel-{S}tephan system of boundary integral equations in the
  time domain.
\newblock {\em Math. Comp.}, {\bf 85}, 2341--2364.

\bibitem[Raviart \& Thomas(1977)Raviart \& Thomas]{RT77}
{\sc Raviart, P.-A. \& Thomas, J.~M.} (1977)
\newblock A mixed finite element method for 2nd order elliptic problems.
\newblock {\em Mathematical aspects of finite element methods ({P}roc. {C}onf.,
  {C}onsiglio {N}az. delle {R}icerche ({C}.{N}.{R}.), {R}ome, 1975)\/}.
\newblock {\em Mathematical aspects of finite element methods ({P}roc. {C}onf.,
  {C}onsiglio {N}az. delle {R}icerche ({C}.{N}.{R}.), {R}ome, 1975)\/}., pp.
  292--315. Lecture Notes in Math., Vol. 606.

\bibitem[Sayas(2016)Sayas]{S16}
{\sc Sayas, F.-J.} (2016)
\newblock {\em Retarded potentials and time domain boundary integral equations:
  A road map\/}. Springer Series in Computational Mathematics,  vol.~50.
\newblock Heidelberg: Springer.

\bibitem[{\'S}migaj {\em et~al.}(2015){\'S}migaj, Betcke, Arridge, Phillips, \&
  Schweiger]{Bempp}
{\sc {\'S}migaj, W., Betcke, T., Arridge, S., Phillips, J. \& Schweiger, M.}
  (2015)
\newblock Solving boundary integral problems with {BEM++}.
\newblock {\em ACM Trans. Math. Software\/}, {\bf 41}, 1--40.

\bibitem[Veselago(1968)Veselago]{V68}
{\sc Veselago, V.~G.} (1968)
\newblock Electrodynamics of substances with simultaneously negative
  $\varepsilon$ and $\mu$.
\newblock {\em Soviet Physics Uspekhi\/}, {\bf 10}, 509--514.

\bibitem[Wang \& Weile(2010)Wang \& Weile]{WW10}
{\sc Wang, X. \& Weile, D.~S.} (2010)
\newblock Electromagnetic scattering from dispersive dielectric scatterers
  using the finite difference delay modeling method.
\newblock {\em IEEE Transactions on Antennas and Propagation\/}, {\bf 58},
  1720--1730.

\bibitem[Widder(1941)Widder]{Wid41}
{\sc Widder, D.~V.} (1941)
\newblock {\em The {L}aplace {T}ransform\/}. Princeton Mathematical Series,
  vol. vol. 6.
\newblock Princeton University Press, Princeton, NJ, pp. x+406.

\end{thebibliography}
\end{document}